\documentclass[reqno,11pt]{amsart}
\usepackage{geometry}
\geometry{left=35mm,right=35mm,top=30mm,bottom=32mm}
\usepackage{mathtools,amssymb,amsthm,mathrsfs,color,lineno,paralist,graphicx,float}
\usepackage[colorlinks,
linkcolor=blue,
anchorcolor=green,
citecolor=blue, 
]{hyperref}
\usepackage{tikz}
\usepackage{graphicx}

\usepackage{enumitem}
\usepackage[T1]{fontenc}
\usepackage[utf8]{inputenc}
\usepackage{subfig} 
\newsubfloat{figure}
\usepackage[justification = centering, labelsep =period]{caption} 

\setcounter{tocdepth}{1}

\usepackage{calc}
\linespread{1.03}


\definecolor{bleu1}{RGB}{0,57,128}
\def\bleu1{\color{bleu1}}

\usepackage{etoolbox}
\patchcmd{\section}{\normalfont}{\normalfont \bleu1}{}{}
\patchcmd{\subsection}{\normalfont}{\normalfont \bleu1}{}{}
\patchcmd{\subsubsection}{\normalfont}{\normalfont \bleu1}{}{}

\newtheorem{proposition}{Proposition}[section]
\newtheorem{theorem}{Theorem}[section]
\newtheorem{definition}{Definition}[section] 
\newtheorem{lemma}{Lemma}[section]
\newtheorem{remark}{Remark}[section]
\newtheorem{corollary}{Corollary}[section]

\newtheorem{example}{Example}

\newcommand{\Z}{{\mathbb Z}}
\newcommand{\C}{{\mathbb C}}
\newcommand{\R}{{\mathbb R}}

\newcommand{\T}{{\mathbb T}}

\usepackage{tikz,tikz-3dplot}
\tdplotsetmaincoords{80}{45}
\tdplotsetrotatedcoords{-90}{180}{-90}
\tikzset{surface/.style={draw=blue!70!black, fill=blue!40!white, fill opacity=.6}}

\usepackage{pgfplots}
\usepgfplotslibrary{polar}
	\usepgfplotslibrary{polar}
\pgfplotsset{compat=1.17}
\makeatletter
\tikzset{reuse path/.code={\pgfsyssoftpath@setcurrentpath{#1}}}
\makeatother

\begin{document}

	\title[]{Non-Self-Adjoint Quasi-periodic Operators with complex spectrum}

\author{Zhenfu Wang}
\address{
	Chern Institute of Mathematics and LPMC, Nankai University, Tianjin 300071, China
}

\email{zhenfuwang@mail.nankai.edu.cn}

\author {Jiangong You}
\address{
Chern Institute of Mathematics and LPMC, Nankai University, Tianjin 300071, China} \email{jyou@nankai.edu.cn}

\author{Qi Zhou}
\address{
Chern Institute of Mathematics and LPMC, Nankai University, Tianjin 300071, China
}

\email{qizhou@nankai.edu.cn}

	\begin{abstract}
	 We  give a precise and complete description on the spectrum for   a class of non-self-adjoint quasi-periodic operators  acting on $\ell^2(\mathbb{Z}^d)$ which contains the Sarnak's model as a special case.
	As a consequence, one can see various interesting spectral phenomena 	including  $\mathscr{P}\mathscr{T}$ symmetric breaking, the non-simply-connected two-dimensional spectrum in this class of operators.  Particularly, we provide new examples of non-self-adjoint operator in $\ell^{2}(\Z)$ whose spectra (actually a two-dimensional subset of $\mathbb{C}$) can not be approximated by the spectra of its finite-interval truncations.
		
			\end{abstract}
	\maketitle

	\section{Introduction}
	Due to its significance in open quantum systems, non-self-adjoint quasi-periodic operators (non-Hermitian quasicrystals  in physics literature)  have received more attention recently in physics. It has been  demonstrated that certain phenomena, such as topological phase transitions \cite{LZC,Longhi}, the coexistence of complex and real spectra \cite{Liu}, and skin effects \cite{skin,LZC}, are only observed in the non-self-adjoint  operators.  Most of these phenomena have not been rigorously justified at the mathematical level
	 because many of the methods used in self-adjoint operators do not work, especially due to a lack of the spectral theorem in the non-self-adjoint setting.
	
	The study of  non-self-adjoint quasi-periodic operators has also been shown to be important in mathematics. For instance, 
Avila \cite{Av0} established the  ground-breaking  global theory of  one-frequency quasi-periodic Schr\"odinger operators. To
 establish  its quantitative version\cite{GJYZ},  the core is to develop the  Thouless-type  formula for the non-self-adjoint operator:
	\begin{equation*}
		(H\psi)({n})=\psi({n+1})+\psi({n-1})+v(\theta+n\alpha+ {i}\epsilon )\psi({n}),
	\end{equation*}
and its dual operator.    The spectrum of non-self-adjoint  Schr\"odinger  operators also has a  deep connection with problems of the elliptic operators, such as ground states, steady states and averaging theory \cite{Kozlov1984,LWZ} and  non-Hermitian quasicrystals \cite{WWYZ}.

	The earliest study of non-self-adjoint quasi-periodic operators can be traced back to Sarnak \cite{Sarnak}, who studied the example
	\begin{equation}\label{operator}
		H_{\alpha}(\lambda,\omega)=\Delta+\lambda e^{2 \pi i(\omega+\langle n, \alpha\rangle)} \delta_{nn'}, \quad n \in \mathbb{Z}^d, 
	\end{equation}
	where $\Delta$ denotes the usual Laplacian on the $\mathbb{Z}^d$ lattice, $\lambda\in\mathbb{C}$ is the coupling constant, $\omega \in \mathbb{T} =\mathbb{R}/\mathbb{Z}$ is the phase, and the frequency $\alpha\in \mathbb{T}^{d}$ is  rationally independent, i.e.,  $(1,\alpha)$ is independent  over $\mathbb{Q}$.
	
	The spectrum of the operator defined by (\ref{operator}) has already been completely understood: 	
	\begin{theorem}\cite{Boca,B,Sarnak,xue}\label{app1}
		Assume that $\alpha \in \mathbb{T}^d$ is rationally independent, $\omega\in\mathbb{T}$ and $\lambda \in \mathbb{C}$. Let $$G_d(z)=\int_{\mathbb{T}^d} \log |z-\sum_{j=1}^{d}\cos2\pi\theta_j| \mathrm{d} \theta.$$
		Then the spectrum of (\ref{operator}) is
		$$
		\sigma(H_{\alpha}(\lambda,\omega))=\{z \in \mathbb{C}\big|G_d(z)=\log|\lambda|\}\cup\{z \in [-2d,2d]\big|G_d(z)>\log|\lambda|\} 
		$$
		for all $  \omega\in\mathbb{T}$.
	\end{theorem}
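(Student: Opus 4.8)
The plan is to exploit the fact that the operator \eqref{operator} is a rank-one-per-site perturbation structure which, after a Fourier transform, turns into a monotone operator amenable to Aubry-duality-type arguments. Concretely, I would first recall that the multiplication operator $\lambda e^{2\pi i(\omega+\langle n,\alpha\rangle)}\delta_{nn'}$ on $\ell^2(\Z^d)$ is unitarily equivalent, via the Fourier transform $\mathcal{F}:\ell^2(\Z^d)\to L^2(\T^d)$, to $\lambda e^{2\pi i\omega}$ times the shift $(\mathcal{S}f)(\theta)=f(\theta-\alpha)$ composed with multiplication; and $\Delta$ becomes multiplication by $2\sum_{j}\cos 2\pi\theta_j$. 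Thus $H_\alpha(\lambda,\omega)$ is conjugate to the operator $(\mathcal{L}f)(\theta) = \big(2\sum_j\cos 2\pi\theta_j\big)f(\theta) + \lambda e^{2\pi i\omega} f(\theta-\alpha)$ on $L^2(\T^d)$, a weighted shift perturbed by a real analytic potential. The resolvent $(\mathcal{L}-z)^{-1}$ can then be analyzed by expanding $(\mathcal{L}-z)f=g$ as a first-order difference equation along the orbit $\{\theta+n\alpha\}$, whose invertibility is governed by the growth/decay of the transfer cocycle, i.e.\ by the Lyapunov exponent of the associated $1\times1$ (scalar) cocycle.

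\medskip

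\textbf{Key steps, in order.} First, perform the Fourier conjugation above and reduce to studying $z\in\sigma(\mathcal{L})$. Second, observe that $(\mathcal{L}-z)f=g$ reads $(2\sum_j\cos2\pi\theta_j - z)f(\theta) = g(\theta) - \lambda e^{2\pi i\omega}f(\theta-\alpha)$, so setting $a(\theta)=2\sum_j\cos 2\pi\theta_j-z$ we get the recursion $f(\theta) = a(\theta)^{-1}\big(g(\theta)-\lambda e^{2\pi i\omega}f(\theta-\alpha)\big)$ whenever $a(\theta)\neq0$; iterating along the orbit, the formal inverse is a Neumann-type series $f(\theta)=\sum_{k\ge0}(-\lambda e^{2\pi i\omega})^k \prod_{j=0}^{k} a(\theta-j\alpha)^{-1}\, g(\theta-k\alpha)$-ish, and its $L^2$-convergence is controlled by $\exp\big(\int_{\T^d}\log|a|\big)=\exp(G_d(z))$ versus $|\lambda|$. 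Third, by unique ergodicity of the rotation $\theta\mapsto\theta+\alpha$ (rational independence), split into cases: if $G_d(z)>\log|\lambda|$ the product decays and the series converges, giving $z\notin\sigma$ \emph{unless} $a$ vanishes somewhere, which forces $z\in[-2d,2d]$; if $G_d(z)<\log|\lambda|$ one instead solves the recursion backwards and again gets invertibility; and if $G_d(z)=\log|\lambda|$ the cocycle is of zero exponent and a standard argument (no uniform hyperbolicity, construction of approximate eigenfunctions using small divisors / the subordinacy-free structure) shows $z\in\sigma$. Fourth, assemble: $\sigma=\{G_d(z)=\log|\lambda|\}\cup\{z\in[-2d,2d]: G_d(z)>\log|\lambda|\}$, and note the answer is independent of $\omega$ because $e^{2\pi i\omega}$ only rotates the shift weight without changing $|\lambda|$ or the orbit structure.

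\medskip

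\textbf{Main obstacle.} The delicate point is the boundary case $G_d(z)=\log|\lambda|$: showing $z$ genuinely lies in the spectrum requires ruling out invertibility of $\mathcal{L}-z$ even though neither the forward nor backward Neumann series converges absolutely. One must produce a Weyl sequence, and here the interplay between the zero Lyapunov exponent and the small divisors $a(\theta-j\alpha)^{-1}$ (which can be large when $z\in[-2d,2d]$ and the orbit approaches the zero set of $a$) is exactly what makes naive estimates fail; for $d\ge2$, or when $z$ is complex with $G_d(z)=\log|\lambda|$ but $z\notin[-2d,2d]$, one can avoid divisors entirely and the argument is cleaner, so the real work is confined to $z\in[-2d,2d]$. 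A secondary subtlety is justifying that $G_d$ is continuous and that its sublevel/superlevel sets have the claimed topology (so that the two pieces glue correctly); this is a potential-theoretic fact about $\log|z-\sum\cos2\pi\theta_j|$ and is where the harmonic-analysis input enters. I would handle the boundary case by approximating $\alpha$ by rationals and passing to the limit, using that for rational $\alpha$ the operator is a direct integral of finite circulant-type matrices whose spectra are explicit, and then invoking continuity of the spectrum in the appropriate (Hausdorff) sense together with the already-established inclusions in the non-boundary regime.
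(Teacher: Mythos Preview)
Your framework is sound and matches the paper: pass to the Aubry dual (the paper lands on $\lambda u(n-1)+V(\theta+n\alpha)u(n)$ acting on $\ell^2(\Z)$, fiberwise equivalent to your $L^2(\T^d)$ picture), establish $\sigma\subset S_\lambda$ by the Neumann-series resolvent construction in the two regimes $G_d(z)\gtrless\log|\lambda|$, and handle $P_\lambda=\{G_d(z)=\log|\lambda|\}\subset\sigma$ by a Weyl sequence built from the zero-Lyapunov-exponent solution. (The paper does the latter directly, without rational approximation; your approximate-eigenfunction remark already points at the right mechanism.)

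There is, however, a genuine gap. You correctly note that when $G_d(z)>\log|\lambda|$ and $z\in[-2d,2d]$ the forward series suffers small divisors, but you never return to show that such $z$ actually lie in $\sigma$; you simply ``assemble'' the answer in step four. This is precisely the set $C_\lambda=\{G_d>\log|\lambda|\}\cap R(V)$, and the paper explicitly identifies $C_\lambda\subset\sigma$ as the hard inclusion---harder than the $P_\lambda$ case you flag as the main obstacle. Your rational-approximation idea \emph{can} be adapted here, and this is essentially the paper's route (a Gordon-type argument): for $z\in C_\lambda$ and $p_n/q_n\to\alpha$ one tunes $\theta$ so that $\prod_{j=0}^{q_n-1}|z-V(\theta+jp_n/q_n)|=\lambda^{q_n}$, using that the product exceeds $\lambda^{q_n}$ for ergodic-typical $\theta$ (since $G_d(z)>\log\lambda$) but vanishes for some $\theta$ (since $z\in R(V)$), together with the intermediate value theorem; this places $z$ in the periodic spectrum by Floquet theory. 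But this argument is absent from your outline. A second point you underestimate: you invoke Hausdorff continuity of the spectrum in $\alpha$, but for non-self-adjoint operators this generally fails, since only $\|(H-z)^{-1}\|\ge\operatorname{dist}(z,\sigma(H))^{-1}$ holds. The paper proves only the one-sided semicontinuity actually needed (if $z$ lies in the spectrum along a sequence of frequencies, then $z$ lies in the spectrum at the limit), via Weyl's criterion applied to explicit compactly supported approximate eigenvectors; full Hausdorff continuity should not be assumed.
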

	\begin{remark}
		Sarnak originally proved Theorem \ref{app1} for the case where $|\lambda|\neq e^{G_d(0)}$ and  $\alpha$ is Diophantine \cite{Sarnak}. Boca later generalized the result for all $\lambda\in\mathbb{C}$ and rationally independent frequency \cite{Boca}. In the case that $d=1$, Borisov-Fedotov used a renormalization technique to describe the geometry of the spectrum  \cite{B}, while Wang-You-Zhou provided a proof using Avila’s global theory \cite{xue}. 
	\end{remark}
	\subsection{Main results}

To see richer spectrum phenomena,  we consider the  long-range operators 
	\begin{equation}\label{zd}
	(H^V_{\alpha}(\lambda,\omega) u)( n)=\sum_{k\in\mathbb{Z}^d}v_ku(n-k)+\lambda e^{2\pi i(\omega+\langle n,\alpha\rangle)} u(n) , \quad n \in \mathbb{Z}^d,
\end{equation}
which include  Sarnak's model (\ref{operator}) as a special case.
  We mention that the operator  \eqref{zd} (in fact its dual model \eqref{dual}) has captured the interests of physicists \cite{Longhi2,Longhi1} as  a special model for  non-symmetric hopping, which is an important area of research \cite{HN,HN2,HN3,LZC}.

Let  $V(\theta)=\sum_{k\in\mathbb{Z}^d}v_ke^{2\pi i\langle k,\theta\rangle}\in C(\mathbb{T}^d,\mathbb{C})$
and $G_d(z)= \int_{\mathbb{T}^d} \log |z-V(\theta)| \mathrm{d} \theta$. We will assume that
		$\log|z-V(\theta)|\in L^1(\mathbb{T}^d)$ for all $z\in\mathbb{C}$.
This assumption  is not  restricted, in fact, it is satisfied when  $V(\cdot)$ is  an analytic function, a Morse function, or a smooth function satisfying the transversality condition  \cite{klein, Klein}:
$\text{For any } \theta\in\mathbb{T}^d, \text{ there is } m\in\mathbb{N}^d, |m|\neq0  \text{ such that }\partial^mV(\theta)\neq0$.

For each $\lambda\in \mathbb{C}\backslash\{0\}$,	 we denote 
	$$
	P_\lambda=\{z \in \mathbb{C}: G_d(z)=\log |\lambda|\} , \qquad  C_\lambda=\{z \in \mathbb{C}: G_d(z)> \log |\lambda|\} \cap R(V),
	$$
	where $R(V)$ is the range of $V(\cdot)$.    Let $S_\lambda=P_\lambda\cup C_\lambda $  and  $S_0=R(V)$, which are uniquely determined by   $V(\cdot)$. Now we state our main theorem:
	
		\begin{theorem}	\label{thm}
		Assume that $\alpha \in \mathbb{T}^d$ is rationally independent, and $\lambda \in \mathbb{C}$, $V(\cdot)\in C(\mathbb{T}^d,\mathbb{C})$ is H\"{o}lder continuous, and $\log|z-V(\cdot)|\in L^1(\mathbb{T}^d)$ for all $z\in\mathbb{C}$. Then the spectrum of 	$H^V_{\alpha}(\lambda,\omega)$ defined in (\ref{zd})
	is
		$$
		\sigma(H^V_{\alpha}(\lambda,\omega))=S_{\lambda}\quad\text{for all }                \omega\in\mathbb{T}.
		$$
	\end{theorem}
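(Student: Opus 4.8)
The plan is to transfer the spectral problem of the long-range operator $H^V_\alpha(\lambda,\omega)$ on $\ell^2(\mathbb{Z}^d)$ to its dual model via the Fourier transform, following the philosophy of Aubry duality. Writing $\widehat{u}(\theta)=\sum_{n}u(n)e^{2\pi i\langle n,\theta\rangle}$, the operator \eqref{zd} becomes, on $L^2(\mathbb{T}^d)$, the operator $\widehat u(\theta)\mapsto V(\theta)\widehat u(\theta)+\lambda\, \widehat u(\theta+\alpha)$ up to the phase $e^{2\pi i\omega}$; more precisely one gets a shifted weighted translation
\begin{equation*}
	(\widehat H \widehat u)(\theta)=V(\theta)\widehat u(\theta)+\lambda e^{2\pi i\omega}\widehat u(\theta-\alpha).
\end{equation*}
The first step is therefore to make this unitary equivalence precise and to observe that, since $\alpha$ is rationally independent, the spectrum of such a one-dimensional weighted-shift-type operator is $\omega$-independent (the phase $\omega$ can be absorbed by a gauge/coordinate change, using minimality of the rotation by $\alpha$), which already explains why $\sigma(H^V_\alpha(\lambda,\omega))$ does not depend on $\omega$.

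The second step is to compute the spectrum of $\widehat H$ directly. For $z\notin R(V)$ the multiplication operator $V-z$ is boundedly invertible, so $\widehat H-z=(V-z)\big(I+\lambda e^{2\pi i\omega}(V-z)^{-1}T_{-\alpha}\big)$ where $T_{-\alpha}$ is translation; invertibility of $\widehat H-z$ is then governed by a weighted shift whose invertibility is detected by the (geometric) mean of the weight, i.e. by comparing $\log|\lambda|$ with $\int_{\mathbb{T}^d}\log|z-V(\theta)|\,d\theta=G_d(z)$. This is the classical mechanism: a weighted shift $w\mapsto a(\theta)w(\theta-\alpha)$ on $L^2(\mathbb{T}^d)$ with ergodic base is invertible iff $\exp\int\log|a|>1$ (resp. $<1$ for the other orientation), with the borderline $\int\log|a|=0$ lying in the spectrum. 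Applying this with $a(\theta)=\lambda e^{2\pi i\omega}(z-V(\theta))^{-1}$ shows that for $z\notin R(V)$ one has $z\in\sigma$ iff $G_d(z)=\log|\lambda|$, giving the part $P_\lambda$. For $z\in R(V)$ one must argue separately: here $V-z$ is not invertible, and one shows $z\in\sigma(\widehat H)$ precisely when $G_d(z)\ge \log|\lambda|$, which combines with the previous case to yield $C_\lambda=\{G_d(z)>\log|\lambda|\}\cap R(V)$ together with the boundary already captured in $P_\lambda$. Thus $\sigma(\widehat H)=P_\lambda\cup C_\lambda=S_\lambda$, and the $\lambda=0$ case degenerates to $\sigma=R(V)=S_0$.

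A technical point running through the second step is the rigorous treatment of the weighted-shift invertibility when the weight $a(\theta)$ is merely Hölder (not analytic) and, worse, when $z\in R(V)$ so that $a$ is unbounded or has zeros/poles; here the hypothesis $\log|z-V(\cdot)|\in L^1(\mathbb{T}^d)$ is exactly what keeps $G_d(z)$ well-defined and finite, and one needs a version of the cocycle/transfer-matrix argument (or a direct construction of approximate eigenfunctions via Birkhoff sums of $\log|a|$ along the orbit) that tolerates these singularities. I expect this — establishing the sharp invertibility criterion for the one-dimensional weighted shift under only Hölder regularity and the $L^1$ log-integrability, including the delicate behaviour on $R(V)$ and on the critical set $G_d(z)=\log|\lambda|$ — to be the main obstacle; once it is in place, the reduction to $S_\lambda$ and the $\omega$-independence follow quite cleanly. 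The remaining steps are then bookkeeping: checking that $P_\lambda$ and $C_\lambda$ are closed and that their union is exactly the complement of the resolvent set, and verifying that Theorem \ref{app1} is recovered by specializing $V(\theta)=\sum_{j=1}^d\cos 2\pi\theta_j$.
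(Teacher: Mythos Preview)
Your overall plan—pass by Fourier/Aubry duality to a weighted-shift model and read off the spectrum by comparing $G_d(z)$ with $\log|\lambda|$—matches the paper's reduction (they land on the $\ell^2(\mathbb{Z})$ fiber operators $(\widehat H^V_\alpha(\lambda,\theta)u)(n)=\lambda u(n-1)+V(\theta+n\alpha)u(n)$ rather than your single $L^2(\mathbb{T}^d)$ operator, but the pictures are equivalent). The inclusion $\sigma\subset S_\lambda$ via Neumann-series factoring when $G_d(z)\ne\log|\lambda|$ and $z\notin R(V)$, and the borderline $P_\lambda\subset\sigma$ via a Weyl sequence built from the subexponentially growing formal solution, go through essentially as you indicate.

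The genuine gap is exactly where you flag the ``main obstacle'': showing $C_\lambda\subset\sigma$, i.e.\ that every $z\in R(V)$ with $G_d(z)>\log|\lambda|$ lies in the spectrum. Your proposed ``direct construction of approximate eigenfunctions via Birkhoff sums'' runs into a concrete difficulty: to build such a sequence you want a phase $\theta$ with $V(\theta)=z$ (so the one-sided formal solution terminates on the left and decays on the right), but you simultaneously need the Birkhoff averages $\tfrac{1}{n}\sum_{j=1}^n\log|z-V(\theta+j\alpha)|$ along the orbit of that specific $\theta$ to approach $G_d(z)$. The set $V^{-1}(z)$ is typically Lebesgue-null, so the a.e.\ ergodic theorem gives nothing there, and $\log|z-V|$ is singular precisely on that set, so uniform or semicontinuous upgrades of Birkhoff do not help either. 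In your $L^2(\mathbb{T}^d)$ picture the same obstruction appears: near $V^{-1}(z)$ the factorization $(V-z)(I+K)$ breaks down, while away from it the spectral-radius estimate would wrongly suggest invertibility. The paper resolves this by a completely different mechanism that your proposal does not hint at: a modified Gordon argument showing $C_\lambda\subset\sigma$ first for Liouvillean $\alpha$ (using rational approximants $p_n/q_n$ and the H\"older bound to find phases where the periodic operator has $z$ as an exact Floquet eigenvalue), combined with a \emph{weak continuity of the spectrum in the frequency} proved via a compactly-supported Weyl-sequence lemma that sidesteps the failure of $\|(H-z)^{-1}\|=\operatorname{dist}(z,\sigma(H))^{-1}$ in the non-self-adjoint setting; density of Liouvillean frequencies and a second application of weak continuity then cover all rationally independent $\alpha$. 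You would need to supply this machinery, or substitute something of comparable strength (e.g.\ Boca's $C^*$-algebra route or Sarnak's analytic-approximation under Diophantine $\alpha$), to close the argument.
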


	\subsubsection{Two-dimensional spectrum}
	
		It is interesting to explore  various spectral phenomena that can not be observed in the self-adjoint case. Theorem \ref{thm} offers  criteria for the existence of a two-dimensional spectrum. 
		
		When $d\ge 2$ and $V(\cdot)$ is complex, typically $R(V)$ is a two-dimensional subset of $\mathbb{C}$. In this case, it is not difficult to get a two-dimensional spectrum from Theorem \ref{thm}.
\begin{corollary}\label{mfre}
	Let  $V(\theta)=V(\theta_1,\cdots,\theta_d)\in C^\omega(\mathbb{T}^d,\mathbb{C})$. 
	Suppose there exists $\tilde{\theta}\in\mathbb{T}^d$, such that 
	\begin{equation}\label{non-dege}rank(\frac{\partial(\Re V)}{\partial \theta_i}(\tilde{\theta}),\frac{\partial(\Im V)}{\partial \theta_i}(\tilde{\theta}))_{1\leq i\leq d}=2.\end{equation}
	Then if $|\lambda|<e^{\min_{z\in \C}G_d(z)}$, ${H}^V_{\alpha}(\lambda, \omega)$ exists two-dimensional spectrum.
\end{corollary}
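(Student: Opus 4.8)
\medskip
\noindent\emph{Proof strategy.}
The plan is to read the conclusion off Theorem~\ref{thm} once two independent points have been established: a \emph{geometric} one, that hypothesis~(\ref{non-dege}) forces $R(V)$ to contain a nonempty open subset of $\mathbb{C}$; and a \emph{potential-theoretic} one, that the smallness condition on $\lambda$ forces $S_\lambda=R(V)$.

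For the geometric point, regard $V$ as a $C^\omega$ map $\mathbb{T}^d\to\mathbb{R}^2\cong\mathbb{C}$, $\theta\mapsto(\Re V(\theta),\Im V(\theta))$. The $d\times 2$ matrix in~(\ref{non-dege}) is precisely the transpose of the differential $DV(\tilde\theta)$, so the rank condition says exactly that $DV(\tilde\theta)$ is surjective, i.e.\ $V$ is a submersion at $\tilde\theta$. By the submersion theorem (equivalently, the implicit function theorem applied to a $2$-dimensional slice of $\mathbb{T}^d$ on which $DV(\tilde\theta)$ is invertible), $V$ is an open map near $\tilde\theta$: there is an open set $U\ni\tilde\theta$ with $V(U)$ open in $\mathbb{C}$, and hence $R(V)\supseteq V(U)$ has nonempty interior. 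Note also that~(\ref{non-dege}) prevents $V$ from being constant, so for every $z\in\mathbb{C}$ the function $\theta\mapsto z-V(\theta)$ is a nonzero real-analytic function on $\mathbb{T}^d$; consequently $\log|z-V(\cdot)|\in L^1(\mathbb{T}^d)$ and $G_d(z)$ is finite for every $z$. Since an analytic $V$ is in particular H\"older continuous, the standing hypotheses of Theorem~\ref{thm} are all in force.

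For the potential-theoretic point, observe that $G_d$ is coercive, $G_d(z)\ge\log\big(|z|-\|V\|_\infty\big)\to+\infty$ as $|z|\to\infty$, and — under the analyticity (transversality) assumptions on $V$ — continuous on $\mathbb{C}$; hence $m:=\min_{z\in\mathbb{C}}G_d(z)$ is attained and finite. The assumption $|\lambda|<e^{m}$ then says $\log|\lambda|<G_d(z)$ for \emph{every} $z\in\mathbb{C}$. Therefore $P_\lambda=\{z\in\mathbb{C}:G_d(z)=\log|\lambda|\}=\emptyset$, while $\{z\in\mathbb{C}:G_d(z)>\log|\lambda|\}=\mathbb{C}$, so that $C_\lambda=\mathbb{C}\cap R(V)=R(V)$ and $S_\lambda=P_\lambda\cup C_\lambda=R(V)$.

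Combining the two points, Theorem~\ref{thm} gives $\sigma\big(H^V_\alpha(\lambda,\omega)\big)=S_\lambda=R(V)\supseteq V(U)$ for all $\omega\in\mathbb{T}$, and $V(U)$ is a nonempty open subset of $\mathbb{C}$; hence the spectrum is two-dimensional. Granted Theorem~\ref{thm}, the argument is essentially formal, and the only point that is not purely soft is the finiteness of $m$ (equivalently, that the hypothesis $|\lambda|<e^{m}$ is non-vacuous): this is exactly where one uses that $G_d$ is genuinely continuous — not merely subharmonic — under the analyticity/transversality assumptions on $V$, so that its minimum over $\mathbb{C}$ is attained and strictly exceeds $-\infty$.
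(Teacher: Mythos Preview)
Your proof is correct and follows essentially the same route as the paper: use the rank condition together with the inverse function theorem (applied to a two-dimensional slice) to show $R(V)$ has nonempty interior, then use continuity and coercivity of $G_d$ to conclude that $|\lambda|<e^{\min G_d}$ forces $P_\lambda=\emptyset$ and $C_\lambda=R(V)$, and finally read off $\sigma=R(V)$ from Theorem~\ref{thm}. Your write-up is in fact slightly more careful than the paper's, in that you explicitly verify the standing hypotheses of Theorem~\ref{thm} and the finiteness of $\min G_d$.
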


\begin{remark}
	In case $d\ge 2$, the non-degeneracy condition \eqref{non-dege} is easily satisfied. We remark that this non-degeneracy condition is also necessary due to  Example \ref{two6} in Section \ref{exam}.	
\end{remark}

In case  $d=1$, the two-dimensional spectrum actually comes from $P_\lambda$. More precisely, we have the following criterion:	
	\begin{theorem}\label{two4}
		Let $V(\theta)=\sum\limits_{k=l}^{m} v_{k} {e}^{2\pi i k\theta}\in C^{\omega}(\mathbb{T},\mathbb{C})$  be non-constant with  
		\begin{equation}\label{con}\sum_{k=l,k\neq0}^{m-1}|v_k|<|v_m|. \end{equation} Then ${H}^V_{\alpha}(\lambda, \omega)$ has two-dimensional spectrum when $|\lambda|=|v_m|$.
	\end{theorem}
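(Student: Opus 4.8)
The plan is to read everything off Theorem~\ref{thm}. Since $|\lambda|=|v_m|>0$, that theorem gives $\sigma(H^V_\alpha(\lambda,\omega))=S_\lambda\supseteq P_\lambda=\{z\in\mathbb{C}:G_1(z)=\log|v_m|\}$, so it is enough to exhibit a non-empty open subset of $\mathbb{C}$ on which the potential $G_1$ equals $\log|v_m|$. The point is that $G_1$ is harmonic off the (one-dimensional) set $R(V)$, so its level sets are generically curves; the coupling $|\lambda|=|v_m|$ is special precisely because at this value $G_1$ is \emph{constant} on a whole bounded component of $\mathbb{C}\setminus R(V)$ — as one already sees for $V(\theta)=e^{2\pi i\theta}$, where $G_1(z)=\log^+|z|\equiv 0$ on the unit disc.

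First I would prove a Jensen-type identity for $G_1$. Writing $w=e^{2\pi i\theta}$ and identifying $V$ with the Laurent polynomial $V(w)=\sum_{k=l}^{m}v_k w^k$, and assuming $m\ge 1$ (the substantive case, where the top frequency is positive), set $\mu=\min(l,0)$ and $P_z(w)=w^{-\mu}\big(V(w)-z\big)$. This is a polynomial in $w$ of degree $N=m-\mu$ whose leading coefficient is $v_m$ (here the hypothesis $m\ge1$ is used, so that the $z$-dependent term $-zw^{-\mu}$ has degree $<N$) and whose zeros are exactly the solutions $\zeta_1(z),\dots,\zeta_N(z)$ of $V(w)=z$, counted with multiplicity. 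Since $|w^{-\mu}|=1$ on the unit circle, $\log|z-V(e^{2\pi i\theta})|=\log|P_z(e^{2\pi i\theta})|$, and Jensen's formula gives
\[
G_1(z)=\int_{\mathbb{T}}\log\big|z-V(e^{2\pi i\theta})\big|\,d\theta=\log|v_m|+\sum_{j:\,|\zeta_j(z)|>1}\log|\zeta_j(z)| ,
\]
so $G_1(z)=\log|v_m|$ as soon as every root of $V(w)=z$ lies in the closed unit disc.

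The second step is a Rouch\'e estimate uniform in $z$. Put $c=\sum_{k=l,\,k\ne0}^{m-1}|v_k|$, which by~(\ref{con}) satisfies $c<|v_m|$, and write $V(w)-z=v_m w^m+\big(v_0-z+\sum_{k=l,\,k\ne0}^{m-1}v_k w^k\big)$, with the convention $v_0=0$ when $l\ge1$. Multiplying by $w^{-\mu}$ and using $|w^{-\mu}|=1$ on $\{|w|=1\}$, one sees that there $P_z$ differs from its leading term $v_m w^{N}$ by a quantity of modulus at most $|z-v_0|+c$. Hence for every $z$ in the open disc $D=\{z\in\mathbb{C}:|z-v_0|<|v_m|-c\}$ — non-empty exactly because of~(\ref{con}) — Rouch\'e's theorem yields that $P_z$ has precisely $N$ zeros in the open unit disc; since $\deg P_z=N$, all roots of $V(w)=z$ lie in $\{|w|<1\}$. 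Combining this with the first step, $G_1\equiv\log|v_m|=\log|\lambda|$ on $D$, whence $D\subseteq P_\lambda\subseteq\sigma(H^V_\alpha(\lambda,\omega))$; as $D$ is a non-empty open set, the spectrum is two-dimensional.

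I expect the only genuinely substantive step to be the first one: seeing that the Jensen identity turns the problem into a question about the location of the roots of $V(w)=z$, and that $|\lambda|=|v_m|$ is exactly the threshold making the correction term $\sum_{|\zeta_j|>1}\log|\zeta_j|$ vanish on an open set. The rest — clearing the factor $w^{-\mu}$ for two-sided Laurent polynomials, tracking the position of $l$ relative to $0$, and making the Rouch\'e comparison uniform over $z\in D$ — is routine.
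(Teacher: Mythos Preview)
Your proof is correct and follows essentially the same route as the paper: both arguments use Rouch\'e's theorem to force all roots of $V(w)=z$ into the open unit disc for $z$ in a small open set, then read off $G_1(z)=\log|v_m|$ there via Jensen's formula and conclude by Theorem~\ref{thm}. Your write-up is slightly more streamlined --- you use the leading-coefficient form of Jensen (so the paper's Vieta step is absorbed) and treat all positions of $l$ at once through $\mu=\min(l,0)$, whereas the paper shifts to $v_0=0$ and splits into the cases $l>0$ and $l<0$ --- but the underlying idea is identical.
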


\begin{remark}
\begin{enumerate}
\item It is not easy to observe two-dimensional spectra in one-frequency quasi-periodic models.
 Moreover, this result   provides  new examples showing that the spectra of non-self-adjoint operators in $\ell^2(\mathbb{Z})$ may not be approximated by the spectra of its finite-truncation $H_{[0,N]}$, as the eigenvalues of $H_{[0,N]}$ always belong to $R(V)$.  This kind of result was only observed in the random Hatano-Nelson model \cite{Gold3} previously.
\item The assumption \eqref{con} is necessary due to Theorem \ref{app1} and  Corollary \ref{twod}, and Theorem \ref{two4}  is optimal  due to Example \ref{app2} in Section \ref{exam} if  $V(\theta)=e^{2\pi i\theta}$.
\item   
If  $V(\theta)=e^{2\pi i\theta}$, \cite{bell}  claimed
that  $\sigma({H}^V_{\alpha}(\lambda, \omega))$ is the unit disc  for $|\lambda|=1$, 
 which was proved in  \cite{Boca1,Fang,ZFS} using $C^*$ algebra technique.
  \end{enumerate}
\end{remark}

Further information concerning the location of the two-dimensional spectrum
 can also be provided. 
\begin{corollary}\label{shape}
	Let $V(\cdot)\in C(\mathbb{T},\mathbb{C})$ be H\"{o}lder continuous, $\log|z-V(\cdot)|\in L^1(\mathbb{T}^d)$ for all $z\in\mathbb{C}$.
	Suppose that  $\Sigma$ is the non-empty two-dimensional part of $\sigma({H}^V_{\alpha}(\lambda, \omega))$.  If $R(V)=\partial D$ for some bounded open set $D=\cup_{i=1}^{k}D_i$, where $D_i$ are all the connected components of $D$, then $\Sigma=\cup_{j=1}^{l}D_{j}$ for some $1\leq l\leq k.$ 
\end{corollary}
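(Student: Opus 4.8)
The plan is to extract the topological shape of the two-dimensional part $\Sigma$ directly from Theorem \ref{thm}, using the harmonic-function properties of $G_1(z)$ and the maximum principle. By Theorem \ref{thm}, $\sigma(H^V_{\alpha}(\lambda,\omega))=S_\lambda=P_\lambda\cup C_\lambda$ with $C_\lambda=\{z:G_1(z)>\log|\lambda|\}\cap R(V)$; since $R(V)=\partial D$ is a one-dimensional set (the boundary of a bounded open set in $\mathbb{C}$), the only way $\Sigma$ can be two-dimensional is that $P_\lambda=\{z:G_1(z)=\log|\lambda|\}$ has nonempty interior, i.e. $G_1$ is constant and equal to $\log|\lambda|$ on some open set. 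First I would recall that $z\mapsto G_1(z)=\int_{\mathbb{T}}\log|z-V(\theta)|\,d\theta$ is subharmonic on all of $\mathbb{C}$ and harmonic on $\mathbb{C}\setminus R(V)=\mathbb{C}\setminus\partial D$; indeed, away from $\partial D$ one can differentiate under the integral sign (the assumption $\log|z-V(\cdot)|\in L^1$ and local boundedness of $(z-V(\theta))^{-1}$ give this). Hence on each connected component of $\mathbb{C}\setminus\partial D$ — these are precisely the bounded components $D_1,\dots,D_k$ together with the single unbounded component $D_\infty$ — the function $G_1-\log|\lambda|$ is harmonic.

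Next I would run a maximum-principle / identity-theorem argument component by component. On the unbounded component $D_\infty$, $G_1(z)=\log|z|+o(1)\to\infty$ as $|z|\to\infty$, so $G_1$ is nonconstant there and $\{G_1=\log|\lambda|\}\cap D_\infty$ has empty interior; thus $D_\infty$ contributes nothing two-dimensional. On a bounded component $D_i$: $G_1$ is harmonic on $D_i$ and continuous up to $\overline{D_i}$, and on $\partial D_i\subset R(V)$ — wait, more carefully: $\partial D_i\subset\partial D=R(V)$, and one must show $G_1$ has constant boundary value $\log|\lambda|$ or is entirely $\le$ or $\ge \log|\lambda|$ there. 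The cleanest route is: a harmonic function on a bounded domain $D_i$ that equals the constant $\log|\lambda|$ on a set with nonempty interior inside $D_i$ must, by the identity theorem for harmonic (real-analytic) functions, be identically $\log|\lambda|$ on $D_i$. So either $G_1\equiv\log|\lambda|$ on all of $D_i$ (and then $D_i\subset P_\lambda$, contributing $\overline{D_i}$ up to boundary to $\Sigma$), or $\{G_1=\log|\lambda|\}\cap D_i$ has empty interior (contributing nothing). Collecting the indices $j$ for which $G_1\equiv\log|\lambda|$ on $D_j$, and relabeling, gives $\Sigma=\bigcup_{j=1}^{l}D_j$ for some $0\le l\le k$; non-emptiness of $\Sigma$ forces $l\ge 1$. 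One should also check the boundary points are swallowed correctly: for such a $j$, $\partial D_j\subset P_\lambda$ too, so $\overline{D_j}\subset\sigma$, and conversely points of $P_\lambda\cap\partial D$ lying on $\partial D_j$ are accounted for; since we only claim $\Sigma$ up to this identification, writing $\Sigma=\bigcup_{j=1}^l D_j$ (open components) is consistent with the statement.

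The main obstacle I anticipate is the step asserting that $G_1$ being constant on an open subset of a bounded complementary component $D_i$ forces it to be constant on all of $D_i$, together with the companion claim that $G_1$ cannot be locally constant on the \emph{unbounded} component — the latter is immediate from the logarithmic growth, but the former requires knowing $G_1$ is genuinely harmonic (not merely subharmonic) on $D_i$, which in turn uses that $R(V)\cap D_i=\emptyset$ (true since $R(V)=\partial D$ and $D_i$ is an open component of the complement of $\partial D$). A secondary subtlety is handling the set $P_\lambda$ where it meets $R(V)=\partial D$ itself: there $G_1$ may fail to be harmonic, so one cannot use the identity theorem across $\partial D$; but since $\partial D$ is one-dimensional it cannot contribute to the \emph{two-dimensional} part $\Sigma$, so it is harmless. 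I would also remark that the hypothesis that $\Sigma$ is non-empty and two-dimensional is exactly what guarantees $l\ge 1$, and that without the structural assumption $R(V)=\partial D$ the complementary components need not be so simply described — this is why the corollary is phrased with that hypothesis.
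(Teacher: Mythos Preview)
Your proposal is correct and follows essentially the same route as the paper: both arguments observe that $C_\lambda\subset R(V)=\partial D$ has empty interior so the two-dimensional part must sit in $P_\lambda$, then use that $G_1$ is harmonic on $\mathbb{C}\setminus R(V)$ together with the identity theorem for harmonic functions and the growth $G_1(z)\to\infty$ to conclude that the interior of the spectrum is exactly the union of those bounded complementary components on which $G_1\equiv\log|\lambda|$. The only cosmetic difference is that the paper splits the argument into two parts (first $\mathring\sigma\subset D$, then $\mathring\sigma$ is a union of full components $D_j$), whereas you run the component-by-component dichotomy directly; your implicit assumption that the components of $\mathbb{C}\setminus\partial D$ are exactly $D_1,\dots,D_k$ and a single unbounded $D_\infty$ is the same simplification the paper makes when it treats $\mathbb{C}\setminus\overline{D}$ as connected.
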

	
	\begin{remark}
	Example \ref{two5} in Section \ref{exam} actually shows that $\Sigma$ could not be all connected components of $D$.
	\end{remark}

	\subsubsection{$\mathscr{P}\mathscr{T}$ symmetric breaking} 
As shown in Sarnak's model given in \eqref{operator},  the spectrum is confined to an interval of $[-2d,2d]$ for $|\lambda|\leq e^{G_d(0)}$, and the complex spectrum appears when  $|\lambda|> e^{G_d(0)}$. We found that this phenomenon is typical if $v_k=\bar{v}_{-k}.$ 
	This spectral phenomenon is related to an important topic in physics \cite{bender1,bender}: $\mathscr{P}\mathscr{T}$ symmetric breaking. 
		Recall  that ${H}^V_{\alpha}(\lambda, \omega)$ is a $\mathscr{P}\mathscr{T}$ symmetric operator if and only if $\lambda\in\mathbb{R}$, $\omega=0$ and $v_k=\overline{v}_{-k}$ \cite{bender}. In the quasi-periodic setting  we say that the family ${H}^V_{\alpha}(\lambda, \omega)$ has $\mathscr{P}\mathscr{T}$ symmetry if ${H}^V_{\alpha}(|\lambda|, 0)$ is $\mathscr{P}\mathscr{T}$ symmetric, since the spectrum of ${H}^V_{\alpha}(\lambda, \omega)$ is independent of $\omega$ \cite{lim}.
		 $\mathscr{P}\mathscr{T}$ symmetric operators represent an important class of non-self-adjoint operators in quantum mechanics \cite{PT}. It is known that  $\mathscr{P}\mathscr{T}$ symmetric operators often have real spectra \cite{bender}. If a $\mathscr{P}\mathscr{T}$ symmetric operator yields complex spectrum, it is referred to as $\mathscr{P}\mathscr{T}$ symmetric breaking.  $\mathscr{P}\mathscr{T}$ symmetric breaking has been found in several quasi-periodic operators  \cite{ LW, LZC, Sarnak}. By Theorem \ref{thm}, one can see clearly that $\mathscr{P}\mathscr{T}$ symmetric breaking occurs in  the class of operators  (\ref{zd}).
	
	\begin{corollary}\label{pt}
		Let $V(\cdot)\in C^{\omega}(\mathbb{T}^d,\mathbb{R})$, the operator $H^V_{\alpha}(\lambda,\omega)$  is  $\mathscr{P}\mathscr{T}$ symmetric and display $\mathscr{P}\mathscr{T}$ symmetric breaking.  In particular, denote $$\lambda_0(V)=e^{\min_{z\in R(V)}G_d(z)}, \qquad \lambda_1(V)=e^{\max_{z\in R(V)}G_d(z)},$$  then we have
		\begin{enumerate}
			\item If $|\lambda|\leq {\lambda_0(V)}$, $	\sigma(H^V_{\alpha}(\lambda,\omega))=[\min\limits_{\theta\in\mathbb{T}^d}V(\theta),\max\limits_{\theta\in\mathbb{T}^d}V(\theta)]\subset\mathbb{R}$.
			\item If $\lambda_0(V)<|\lambda|\leq\lambda_1(V)$, $	\sigma(H^V_{\alpha}(\lambda,\omega)) $ has both complex spectrum and real spectrum. 
			\item If $|\lambda|>\lambda_1(V)$, $\sigma(H^V_{\alpha}(\lambda,\omega))$ is a real analytic curve of $\R^2$.
		\end{enumerate}
	\end{corollary}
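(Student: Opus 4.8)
The plan is to read everything off Theorem~\ref{thm}, which gives $\sigma(H^V_{\alpha}(\lambda,\omega))=S_\lambda=P_\lambda\cup C_\lambda$, and then to analyse the logarithmic potential $G_d(z)=\int_{\mathbb{T}^d}\log|z-V(\theta)|\,\mathrm d\theta=\int_{\R}\log|z-w|\,\mathrm d\nu(w)$, where $\nu=V_\ast(\mathrm{Leb}_{\mathbb{T}^d})$. First I would dispose of the $\mathscr P\mathscr T$-symmetry claim: since $V\in C^\omega(\mathbb{T}^d,\R)$ we have $v_k=\overline{v_{-k}}$, so $H^V_{\alpha}(|\lambda|,0)$ meets the criterion of \cite{bender} and the family is $\mathscr P\mathscr T$-symmetric in the sense fixed above; as $\sigma$ depends on $\lambda$ only through $|\lambda|$ and not on $\omega$, it then remains to locate $\sigma=\sigma(H^V_{\alpha}(|\lambda|,0))$ and to show it is non-real once $|\lambda|>\lambda_0(V)$, which is exactly what (1)--(3) do. I would record at the outset the elementary facts that, $V$ being real, continuous and $\mathbb{T}^d$ connected, $R(V)=[a,b]$ with $a=\min V$, $b=\max V$; that $\nu$ is a Borel probability measure on $[a,b]$; and that $G_d$ is subharmonic on $\C$, harmonic (hence real-analytic) on $\Omega:=\C\setminus[a,b]$, continuous on $\C$ (here $V\in C^\omega$ is used), and obeys $G_d(z)=\log|z|+o(1)$ as $z\to\infty$. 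Write $\mu=\min_{[a,b]}G_d$ and $\Lambda=\max_{[a,b]}G_d$, so that $\lambda_0(V)=e^{\mu}$ and $\lambda_1(V)=e^{\Lambda}$.

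The analysis then rests on two potential-theoretic lemmas I would establish first. (i) $\min_{\C}G_d=\mu$ and $G_d>\mu$ throughout $\Omega$: apply the minimum principle to the harmonic function $G_d$ on $D_R\setminus[a,b]$ for a large disc $D_R\supset[a,b]$ (its minimum over $\overline{D_R}$ is attained on $\partial D_R\cup[a,b]$, and for $R$ large on $[a,b]$), let $R\to\infty$, then invoke the strong minimum principle on the connected set $\Omega$ together with $G_d(z)\to\infty$. (ii) $G_d$ has no critical point in $\Omega$: from $2\partial_z G_d(z)=\int_{\R}(z-w)^{-1}\,\mathrm d\nu(w)$ one gets $\operatorname{Im}\!\big(2\partial_z G_d(z)\big)=-\operatorname{Im}z\int_{\R}|z-w|^{-2}\,\mathrm d\nu(w)\neq 0$ whenever $\operatorname{Im}z\neq0$, while for real $z\notin[a,b]$ the Cauchy transform $\int_{\R}(z-w)^{-1}\,\mathrm d\nu(w)$ has a fixed nonzero sign; hence $\nabla G_d$ never vanishes on $\Omega$.

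With (i) and (ii) the three regimes are short. For (1), $\log|\lambda|\le\mu$: by (i) either $P_\lambda=\varnothing$ (if $\log|\lambda|<\mu$) or $P_\lambda=\{G_d=\mu\}\subset[a,b]$ (if $\log|\lambda|=\mu$), while $C_\lambda=\{z\in[a,b]:G_d(z)>\log|\lambda|\}$ fills up the complement of $P_\lambda$ in $[a,b]$, so $S_\lambda=[a,b]=[\min V,\max V]\subset\R$. For (3), $\log|\lambda|>\Lambda$: here $C_\lambda=\varnothing$ and $P_\lambda=\{z\in\Omega:G_d(z)=\log|\lambda|\}$ is, by (ii), a compact real-analytic $1$-manifold; to see it is a single Jordan curve I would introduce the single-valued holomorphic $F(z)=\exp\!\big(\int_{\R}\operatorname{Log}(z-w)\,\mathrm d\nu(w)\big)$ on $\Omega$, observe $|F|=e^{G_d}$, $F'=F\cdot\int_{\R}(z-w)^{-1}\,\mathrm d\nu(w)\neq0$ and $F(z)=z+O(1)$, $F'(z)\to1$ as $z\to\infty$, check that $\{G_d>\log|\lambda|\}$ is connected (a bounded component would lie in $\Omega$ and carry a harmonic function with constant boundary value $\log|\lambda|$, forcing $G_d\le\log|\lambda|$ there), so that $F$ is a proper local homeomorphism — hence a covering map — of $\{G_d>\log|\lambda|\}$ onto $\{|w|>|\lambda|\}$ of degree one (computed near $\infty$, where $F$ is injective), hence a biholomorphism extending to a homeomorphism of boundaries; thus $\sigma=P_\lambda$ is a real-analytic Jordan curve, in particular not contained in $\R$. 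For (2), $\mu<\log|\lambda|\le\Lambda$: a maximizer $z^\ast\in[a,b]$ of $G_d|_{[a,b]}$ lies in $C_\lambda$ (if $\log|\lambda|<\Lambda$) or in $P_\lambda$ (if $\log|\lambda|=\Lambda$), so $\sigma\cap\R\neq\varnothing$; and for a minimizer $z_\ast\in[a,b]$ one has $G_d(z_\ast)=\mu<\log|\lambda|$ while $G_d(z_\ast+\mathrm{i}t)\to\infty$ as $t\to\infty$, so by the intermediate value theorem $G_d(z_\ast+\mathrm{i}t_0)=\log|\lambda|$ for some $t_0>0$, giving a non-real point of $P_\lambda\subset\sigma$, whence $\sigma\setminus\R\neq\varnothing$.

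The only step I expect to be genuinely delicate is the global geometry in case (3): upgrading the purely local statement that $P_\lambda$ is a $1$-manifold to the assertion that it is a single analytic Jordan curve, via the covering-map/degree argument for $F$. Everything else is bookkeeping with $G_d$ together with the identity $\sigma=S_\lambda$ from Theorem~\ref{thm}.
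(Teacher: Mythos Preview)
Your proposal is correct and follows essentially the same route as the paper for cases (1) and (2): both read off $\sigma = P_\lambda \cup C_\lambda$ from Theorem~\ref{thm} and appeal to the elementary properties of the logarithmic potential $G_d$. Your lemmas (i) and (ii) correspond exactly to the paper's Lemma~4.1, with (ii) packaged more compactly via the Cauchy transform $2\partial_z G_d(z)=\int(z-w)^{-1}\,\mathrm d\nu(w)$ rather than as separate computations of $\partial_x G_d$ and $\partial_y G_d$; and your minimum-principle argument for (i) replaces the paper's direct monotonicity argument from those partials.

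The one genuine difference is in case (3). The paper simply invokes the regular value theorem (stated in the preliminaries as Theorem~\ref{pos1}): since $G_d$ is harmonic (hence real-analytic) off $[a,b]$ and $\nabla G_d\neq 0$ there, the level set $P_\lambda$ is a real-analytic $1$-submanifold of $\R^2$, and the proof stops. Your covering-map argument via $F(z)=\exp\!\int\operatorname{Log}(z-w)\,\mathrm d\nu(w)$ establishes the stronger conclusion that $P_\lambda$ is a \emph{single} analytic Jordan curve. This is correct and gives more geometric information, but the statement as phrased (``a real analytic curve of $\R^2$'') only asks for the submanifold conclusion, so the implicit-function/regular-value step you already have (``$P_\lambda$ is a compact real-analytic $1$-manifold'') suffices; the delicate global step you flagged is in fact optional.
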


\begin{remark}
 It is known that the   self-adjoint Schr\"odinger operator
	 \begin{equation*}
(H_{V} u)(n)= u(n+1)+u(n-1) + V (n) u(n)=z u(n),
\end{equation*}	
(i.e. $V(n)$ is real) is isospectral to $H_{0}$ if and only if \,$V\equiv 0$ \cite{Bo,KS}.
 However, this is not the case for  non-self-adjoint quasi-periodic operators. In the almost-periodic case, Wang-You-Zhou  \cite{xue} extended Sarnak's result, and showed that if  the potential $\lambda V(\cdot) $ satisfies some  cone structure, and the coupling constant $\lambda$ is small, then  its spectrum is $[-2,2]$. 
 	Corollary \ref{pt} further generalizes Sarnak's model to long-range operators and provides additional examples of interval spectrum. 
	\end{remark}
	
\subsubsection{Hatano-Nelson model}
As a special example, the Hatano-Nelson model merits separate consideration. 
This  is a single particle, non-Hermitian Hamiltonian defined by 
\begin{equation*}
	(H_{\omega} u)(n)= e^gu(n+1)+e^{-g}u(n-1) + V_{\omega} (n) u(n).
\end{equation*}
It was introduced to  study the
motion of magnetic flux lines in disordered type-II  superconductors \cite{HN,HN2,HN3}, and has been extensively studied in recent years due to its relevance in understanding the localization properties and  topological properties \cite{Gong,LW,LZC}. 

The random Hatano-Nelson model is defined as 
$\{V_\omega(n)\}_{n\in\mathbb{Z}}$
being i.i.d..
The spectrum of this model has been extensively studied. Goldsheid and his  collaborators  \cite{Gold1,Gold3,Gold2}  studied the asymptotic properties of the restriction of  $H_\omega$ to the interval $[1,N]$. Conversely, Davies \cite{Davies,Davies1} studied 
$H_\omega$
as an operator acting on $\ell^2(\Z)$ and concluded that 
if the support of  $V_\omega$ is $[- \lambda,\lambda]$, 
	 the spectrum of $H_\omega$  almost surely contains a two-dimensional subset.
More precisely,   for almost surely $\omega$ $$[-\lambda,\lambda]+\{e^{-g}e^{2\pi i\theta}+e^ge^{-2\pi i\theta}|\theta\in[0,1]\}\subset \sigma(H_\omega).$$  In particular, if $\lambda\geq e^g+e^{-g}$, $$\sigma(H_\omega)=[-\lambda,\lambda]+\{e^{-g}e^{2\pi i\theta}+e^ge^{-2\pi i\theta}|\theta\in[0,1]\}.$$ If $\lambda< e^g-e^{-g}$, $\sigma(H_\omega)$ has a hole $$\{z\big| |z|\leq e^g-e^{-g}-\lambda\}\not\subset\sigma(H_\omega).$$ However,  it is not clear whether the hole exists  when $e^g-e^{-g}\leq\lambda<e^g+e^{-g}$.

In contrast, in the quasi-periodic setting, we discover that the spectrum is typically one-dimensional except for one $\lambda$:
	\begin{corollary}\label{twod}
		Assume that $\alpha \in\mathbb{T}\backslash\mathbb{Q}$, $\omega\in\mathbb{T}$ and $g\in\mathbb{C}$ such that $\Re g\geq0$.	  Consider the operator
\begin{equation}\label{eg}
				(H_{\alpha}(\lambda,\omega) u)( n)=e^{-g}u(n-1)+e^{g}u(n+1)+\lambda e^{2\pi i(\omega+ n\alpha)} u(n) , \quad n \in \mathbb{Z}.
		\end{equation}
	Then
	$$\sigma({H}_{\alpha}(\lambda, \omega))=\begin{cases}
		\{e^{-g}e^{2\pi i\theta}+e^ge^{-2\pi i\theta}|\theta\in[0,1]\}, & |\lambda|<e^{\Re g},\\
		\{\eta e^{-g}e^{2\pi i\theta}+\frac{1}{\eta} e^ge^{-2\pi i\theta}|\eta\in[1,{e^{\Re g}}],\theta\in[0,1]\}, & |\lambda|=e^{\Re g},\\
		\{\frac{e^{\Re g}}{|\lambda|}e^{-g}e^{2\pi i\theta}+{e^{-\Re g}|\lambda|} e^ge^{-2\pi i\theta}|\theta\in[0,1]\},& |\lambda|>e^{\Re g},		
	\end{cases}
$$
	for all $ \omega\in\mathbb{T}$.
	
	\end{corollary}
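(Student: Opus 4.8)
The plan is to recognize \eqref{eg} as the special case of \eqref{zd} in which
\[
V(\theta)=e^{-g}e^{2\pi i\theta}+e^{g}e^{-2\pi i\theta},
\]
and then to invoke Theorem \ref{thm}, so that the whole problem reduces to making the sets $P_\lambda$ and $C_\lambda$ explicit. First I would check the hypotheses of Theorem \ref{thm}: since $e^{\pm g}\neq0$, this $V$ is a non-constant trigonometric polynomial, hence real-analytic and in particular H\"older; and for every $z\in\C$ the function $\theta\mapsto z-V(\theta)$ is a trigonometric polynomial which is not identically zero, so $\log|z-V(\cdot)|$ has at worst logarithmic singularities and lies in $L^{1}(\T)$. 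Thus Theorem \ref{thm} applies and $\sigma(H_\alpha(\lambda,\omega))=S_\lambda=P_\lambda\cup C_\lambda$ for every $\omega$.

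Next I would compute $G_1$ by Jensen's formula. Writing $w=e^{2\pi i\theta}$ one has $|z-V(\theta)|=|e^{-g}w^{2}-zw+e^{g}|$; factoring $e^{-g}w^{2}-zw+e^{g}=e^{-g}(w-r_1)(w-r_2)$ with $r_1r_2=e^{2g}$ and $r_1+r_2=ze^{g}$, and using $\int_{0}^{1}\log|e^{2\pi i\theta}-r|\,\mathrm{d}\theta=\log^{+}|r|$, I obtain
\[
G_1(z)=-\Re g+\log^{+}|r_1(z)|+\log^{+}|r_2(z)|.
\]
From $|r_1r_2|=e^{2\Re g}\ge1$ it follows that $G_1(z)\ge\Re g$ for all $z$, with equality precisely when $\min(|r_1|,|r_2|)\ge1$. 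Moreover, if $z=e^{-g}\zeta+e^{g}\zeta^{-1}\in R(V)$ with $|\zeta|=1$, the two roots are exactly $\zeta$ and $e^{2g}\zeta^{-1}$, so $G_1\equiv\Re g$ on $R(V)$; hence $R(V)\subseteq\{z:G_1(z)=\min_{\C}G_1=\Re g\}$.

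Now the three regimes follow by plugging into Theorem \ref{thm}. If $|\lambda|<e^{\Re g}$, then $\log|\lambda|<\min G_1$, so $P_\lambda=\varnothing$ and $C_\lambda=\{G_1>\log|\lambda|\}\cap R(V)=R(V)$, which is the first line. If $|\lambda|\ge e^{\Re g}$, then on $R(V)$ one has $G_1=\Re g\le\log|\lambda|$, so $C_\lambda=\varnothing$ and $\sigma=P_\lambda=\{z:G_1(z)=\log|\lambda|\}$. On $P_\lambda$ one reads the moduli of the roots off the formula for $G_1$: when $\log|\lambda|>\Re g$, exactly one root, say $r_1$, has $|r_1|\ge1$, and then necessarily $|r_1|\ge e^{2\Re g}$, $|r_2|\le1$, so $G_1=-\Re g+\log|r_1|=\log|\lambda|$ forces $|r_1|=|\lambda|e^{\Re g}$, $|r_2|=e^{\Re g}/|\lambda|$; conversely any $r_1$ on the circle $|r_1|=|\lambda|e^{\Re g}$ (with $r_2=e^{2g}/r_1$) produces a point of $P_\lambda$. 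Substituting $r_1=|\lambda|e^{\Re g}e^{2\pi i s}$ into $z=e^{-g}(r_1+r_2)$ and simplifying gives the third line. When $\log|\lambda|=\Re g$, instead $P_\lambda=\{z:|r_1|,|r_2|\ge1\}$; parametrizing $|r_1|=t\in[1,e^{2\Re g}]$, $|r_2|=e^{2\Re g}/t$, $r_1=te^{2\pi i s}$, and noting that the involution $(t,s)\mapsto(e^{2\Re g}/t,-s)$ leaves $z=e^{-g}(r_1+r_2)$ unchanged, one may restrict to $t\in[1,e^{\Re g}]$ and, setting $\eta=t$, recover exactly the middle line.

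I expect the main obstacle to be the bookkeeping in these last two (critical and super-critical) regimes: precisely characterizing which of the two roots has modulus $\ge1$, pinning down the threshold $|r_1|=e^{2\Re g}$, carrying out the symmetry reduction of the $t$-range, and verifying both inclusions so that the proposed curves are genuinely the full level sets; the degenerate case $\Re g=0$ (where $R(V)=[-2,2]$ and the middle regime collapses onto it) should also be seen to be covered by the same computation.
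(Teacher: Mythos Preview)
Your proposal is correct and follows essentially the same route as the paper: apply Theorem \ref{thm} with $V(\theta)=e^{-g}e^{2\pi i\theta}+e^{g}e^{-2\pi i\theta}$, compute $G_1$ via Jensen's formula by factoring $e^{-g}w^{2}-zw+e^{g}$, and read off the three regimes. The only organizational difference is that the paper starts from the parametrization $z=\eta e^{-g}e^{2\pi i\theta}+\eta^{-1}e^{g}e^{-2\pi i\theta}$ of the plane (with $\eta\in(0,e^{\Re g}]$), which makes the roots $\eta e^{2\pi i\theta}$ and $\eta^{-1}e^{2g}e^{-2\pi i\theta}$ explicit and gives $G_1$ in one line as a function of $\eta$; you instead compute $G_1$ first and then recover exactly this parametrization when describing the level sets. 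Your version has the mild advantage that it justifies the surjectivity of the $(\eta,\theta)$-parametrization rather than asserting it, at the cost of the extra bookkeeping you anticipate; either way the substance is identical.
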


  \begin{remark}
By a proper transformation,  the more general operators 
\begin{equation}\label{ab}(H_{\alpha}(\lambda,\omega) u)(n)=au(n-1)+bu(n+1)+\lambda e^{2\pi i(\omega+ n\alpha)} u(n)
\end{equation}
 where $a,b\in\mathbb{C}$ can be converted into operator (\ref{eg}). Furthermore,  the spectrum of  (\ref{ab}) is, in fact, a rotation and scaling of the spectrum of (\ref{eg}). In particular, if $g=\frac{\pi}{2}i$, the above model can be transformed to
 $$(H_{\alpha}(\lambda,\omega) u)(n)=-u(n-1)+u(n+1)+\lambda e^{2\pi i(\omega+ n\alpha)} u(n),$$ with  different sign in the hopping. This model has also been extensively studied \cite{Davies,Gong,Mar1,Mar}.  
  One can consult  \cite{WWYZ1}  for more facts about   general  quasi-periodic Hatano-Nelson models.
 
		\end{remark}

Hatano-Nelson also investigated high-dimensional model to examine  magnetic flux lines in superconductors   and observed a transition from localized states to delocalization  \cite{HN}. Higher dimensions offer a multifaceted environment that facilitates compelling interactions among qualitatively unique phenomena \cite{bha,Lee}.
The high-dimensional Hatano-Nelson model is particularly challenging, we take $\Z^2$ as an example, and consider 
$$(H_{\omega}u)(m,n) =  (H_0u)(m,n)+V_{\omega}(m,n) u(m,n)$$
where 
$$ (H_0u)(m,n)=e^{-g}u(m-1,n)+e^{g}u(m+1,n)+u(m,n-1)+u(m,n+1)$$ for some $g>0$. 
 In the random case, 
the spectrum of $H_\omega$ 
 	is almost surely two-dimensional \cite{Davies}. Specifically,  if the support of  $V_\omega$ is $[- \lambda,\lambda]$ 
 and $\lambda\geq e^{g}+e^{-g}-2,$ then $\sigma({H}_\omega)= E+[- \lambda,\lambda]$, where 
$$E=\{[-2,2]+e^{-g}e^{2 \pi i\theta}+e^{g}e^{-2\pi i\theta}|\theta\in[0,1]\}$$
is the spectrum of $H_0$. 

For high-dimensional quasi-periodic Hatano-Nelson model, we obtain the following:
		\begin{corollary}\label{twod1}
		Assume that  $\alpha=(\alpha_1,\alpha_2) \in\mathbb{T}^2$ is rationally independent, $\omega\in\mathbb{T}$ and $g>0$.	  Consider the operator  
		\begin{align*}\label{eg}
			(H_{\alpha}(\lambda,\omega) u)(m,n)= (H_0u)(m,n)+\lambda e^{2\pi i(\omega+ m\alpha_1+ n\alpha_2)} u(m,n) , \quad (m,n) \in \mathbb{Z}^2.
		\end{align*}
		Then
		$$\sigma({H}_{\alpha}(\lambda, \omega))=\begin{cases}
		E, & |\lambda|<e^{ g},\\
		\mathrm{Conv}(E), & |\lambda|=e^{ g},
		\end{cases}
		$$
		for all $ \omega\in\mathbb{T}$,  where $\mathrm{Conv}$ denotes the closed convex hull.
		Moreover, if $$\lambda>e^{\max_{|z|\leq 2(e^g+e^{-g}+2)}G_2(z)},$$  then $\sigma({H}_{\alpha}(\lambda, \omega))$ is a real analytic curve.
		
	\end{corollary}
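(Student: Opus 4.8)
The plan is to recognize the operator in question as the instance of $H^V_\alpha(\lambda,\omega)$ in \eqref{zd} with symbol
\[
V(\theta_1,\theta_2)=e^{-g}e^{2\pi i\theta_1}+e^{g}e^{-2\pi i\theta_1}+2\cos 2\pi\theta_2,
\]
which is a trigonometric polynomial, hence real-analytic, so Theorem \ref{thm} applies; note also $R(V)=E=\sigma(H_0)$. Everything then reduces to understanding $S_\lambda=P_\lambda\cup C_\lambda$ through the logarithmic potential $G_2(z)=\int_{\mathbb{T}^2}\log|z-V(\theta)|\,d\theta$.

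The computational core is an explicit evaluation of $G_2$ by iterating Jensen's formula. Set $W(\theta_1)=e^{-g}e^{2\pi i\theta_1}+e^{g}e^{-2\pi i\theta_1}$; substituting $\zeta=e^{2\pi i\theta_1}$ and factoring $\zeta^2-e^{g}w\,\zeta+e^{2g}=(\zeta-\zeta_+)(\zeta-\zeta_-)$ (with $\zeta_+\zeta_-=e^{2g}$, and $\zeta_\pm=e^{g}\mu_\pm$ where $\mu_\pm$ are the roots of $\mu^2-w\mu+1=0$, so $\mu_+\mu_-=1$), one gets from $\int_{|\zeta|=1}\log|\zeta-a|\,\tfrac{|d\zeta|}{2\pi}=\log^+|a|$ that
\[
F(w):=\int_{\mathbb{T}}\log|w-W(\theta_1)|\,d\theta_1=\max\{\,g,\ \log|\mu_+(w)|\,\},
\]
where $\mu_+(w)$ is the root of $\mu^2-w\mu+1=0$ outside the closed unit disk. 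In particular $F(w)=g$ exactly when $|\mu_+(w)|\le e^{g}$, which (via the conformal map $\mu\mapsto\mu+\mu^{-1}$ from $\{|\mu|>1\}$ onto $\widehat{\mathbb{C}}\setminus[-2,2]$) holds exactly when $w$ lies in the closed filled ellipse $\overline D$ bounded by $\mathcal{E}=\{e^{-g}e^{2\pi i\theta}+e^{g}e^{-2\pi i\theta}:\theta\in\mathbb{T}\}$; otherwise $F(w)>g$. Integrating out $\theta_2$ gives $G_2(z)=\int_{\mathbb{T}}F(z-2\cos 2\pi\theta_2)\,d\theta_2\ge g$, with equality if and only if $z-2\cos 2\pi\theta_2\in\overline D$ for every $\theta_2$, i.e. $z+[-2,2]\subseteq\overline D$, i.e. (by convexity of $\overline D$) $z\pm 2\in\overline D$. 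Write $K$ for this set; thus $\min_{\mathbb{C}}G_2=g$, attained precisely on $K$.

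Now Theorem \ref{thm} yields the three regimes. If $|\lambda|<e^{g}$ then $\log|\lambda|<g\le G_2$ everywhere, so $P_\lambda=\varnothing$ and $C_\lambda=R(V)=E$, giving $\sigma=E$. If $|\lambda|=e^{g}$ then $P_\lambda=\{G_2=g\}=K$ and $C_\lambda=\{G_2>g\}\cap E=E\setminus K$, so $\sigma=K\cup E$; since $E=\mathcal{E}+[-2,2]$ and $\overline D=\mathrm{Conv}(\mathcal{E})$ we have $\mathrm{Conv}(E)=\overline D+[-2,2]$, and a point $z\in\overline D+[-2,2]$ not in $E$ lies on a horizontal segment $z+[-2,2]$ which meets $\overline D$ but (as $z\notin\mathcal{E}+[-2,2]$) not its boundary $\mathcal{E}$; being connected, that segment lies entirely in $\overline D$, hence $z\pm2\in\overline D$, i.e. $z\in K$. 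Therefore $\sigma=K\cup E=\overline D+[-2,2]=\mathrm{Conv}(E)$. Finally, if $|\lambda|>e^{\max_{|z|\le 2\rho}G_2(z)}$ with $\rho:=e^{g}+e^{-g}+2=\max_\theta|V(\theta)|$, then $R(V)\subseteq\{|z|\le\rho\}$ forces $C_\lambda=\varnothing$ and $P_\lambda=\{G_2=\log|\lambda|\}\subseteq\{|z|>2\rho\}$; there $G_2$ is harmonic and, using $\int_{\mathbb{T}^2}V\,d\theta=0$, $\partial_zG_2(z)=\tfrac12\int_{\mathbb{T}^2}(z-V(\theta))^{-1}\,d\theta=\tfrac1{2z}\bigl(1+O((\rho/|z|)^2)\bigr)\ne 0$, so $P_\lambda$ is a real-analytic $1$-manifold. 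A maximum-principle argument for the subharmonic function $G_2$ — every bounded component of $\{G_2<\log|\lambda|\}$ must meet the connected set $R(V)$, while $\{G_2>\log|\lambda|\}$ has no bounded component — shows that in $\widehat{\mathbb{C}}$ both $\{G_2<\log|\lambda|\}$ and $\{G_2>\log|\lambda|\}$ are connected, so the $1$-manifold $P_\lambda$ is a single real-analytic Jordan curve, which is the asserted spectrum.

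The main obstacle is the explicit evaluation of $G_2$: the bookkeeping in the two Jensen integrations, and above all the precise identification of $\{F=g\}$ with the filled ellipse $\overline D$ (equivalently, deciding which root of $\mu^2-w\mu+1$ governs the integral and where $|\mu_+|\le e^g$). Once this is in hand, the three regimes follow from Theorem \ref{thm} together with elementary planar geometry of Minkowski sums (the case $|\lambda|=e^{g}$) and standard potential theory for level sets of a logarithmic potential at a height above its range (the large-$\lambda$ case).
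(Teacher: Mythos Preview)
Your proposal is correct and follows essentially the same route as the paper: reduce to $G_2(z)=\int_{\mathbb T}G_1(z-2\cos2\pi\theta_2)\,d\theta_2$, evaluate the inner integral $G_1$ (your $F$) by Jensen's formula, identify $\{G_2=g\}$ as the set of $z$ with $z+[-2,2]$ contained in the filled ellipse, and then read off each regime from Theorem~\ref{thm}; for large $\lambda$, show $\nabla G_2\neq 0$ on $\{|z|>2\rho\}$ and invoke the regular value theorem. The only cosmetic differences are that you prove nonvanishing of the gradient via the power-series expansion of $\partial_zG_2(z)=\tfrac12\int(z-V)^{-1}\,d\theta$ (the paper checks the real and imaginary parts of $\nabla G_2$ directly), and you give an explicit connectedness argument for $K\cup E=\mathrm{Conv}(E)$ where the paper simply asserts the identity $\overline{\mathrm{Conv}(R(V))\setminus R(V)}=(A+2)\cap(A-2)$; your additional Jordan-curve claim at the end goes a bit beyond what the corollary asserts but does no harm.
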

	
		\begin{figure}[h!]
		\centering  	
		\subfloat[$|\lambda|<$$e^{ g}$]{
	\begin{tikzpicture}[scale=1.5]

		\draw[fill=gray!30,rounded corners=22pt]
		(-1,-0.5) rectangle ++(2,1);		
			\draw [fill=white](0, 0) circle (0.3);	
	 
	\end{tikzpicture}
		}
		\hspace{10pt}  
		\subfloat[$|\lambda|$=$e^{ g}$]{
			\begin{tikzpicture}[scale=1.5]
					\filldraw[fill=gray!30,rounded corners=22pt]
				(0,0) rectangle ++(2,1);
			\end{tikzpicture}
		}
	
		\caption{\quad Spectrum with a hole}
		\label{tu9}
	\end{figure}
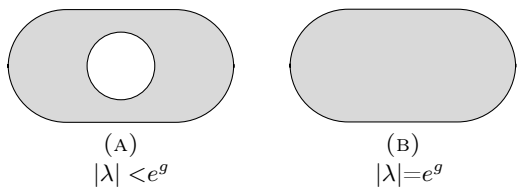

\begin{remark}
As depicted in Figure  \ref{tu9}, the spectrum of ${H}_{\alpha}(\lambda, \omega)$ has a hole  if $|\lambda|<e^g$. To the  best knowledge of the authors, this gives the first example of non-self-adjoint  quasi-periodic operator, which possesses non-simply-connected two-dimensional spectrum.   If $|\lambda|=e^g$, the hole   will be filled.  More surprisingly,   the spectrum becomes   an analytic curve for sufficiently large 
	$\lambda$. This is completely different from the behavior in the random case.\end{remark}

\subsection{Methods of the proof}
Let us explain the main ideas. The key observation is that the spectrum of $H^V_{\alpha}(\lambda,\omega)$ is same as its dual model  \cite{AA}:
\begin{equation}\label{dual}
	(\widehat{H}^V_{\alpha}(\lambda, \theta)u)(n)=\lambda u(n-1)+V(\theta+n\alpha)u(n), \quad n \in \mathbb{Z}.
\end{equation}
So the proof of Theorem \ref{thm}  is reduced to the following Theorem.
\begin{theorem}\label{first order}
 	Under the same assumptions on $V(\cdot)$ as in  Theorem \ref{thm}, the spectrum of \eqref{dual} is
$$
\sigma(\widehat{H}^V_{\alpha}(\lambda, \theta))=S_{\lambda}\quad\text{for all  }                \theta\in\mathbb{T}^d.
$$
\end{theorem}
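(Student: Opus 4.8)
The plan is to rewrite $\widehat H^V_\alpha(\lambda,\theta)-z$ as an invertible operator composed with a translate of a single bilateral weighted shift, and then to locate the spectrum of that shift using only minimality and unique ergodicity of the rotation by $\alpha$. We may assume $\lambda\ne0$ (if $\lambda=0$ the operator is multiplication by $V(\theta+n\alpha)$, whose spectrum is $\overline{\{V(\theta+n\alpha):n\in\mathbb Z\}}=R(V)=S_0$ by minimality). Writing $\widehat H^V_\alpha(\lambda,\theta)=\lambda S+M_\theta$, with $(Su)(n)=u(n-1)$ the unitary bilateral shift on $\ell^2(\mathbb Z)$ and $(M_\theta u)(n)=V(\theta+n\alpha)u(n)$, one has
\begin{equation*}
	\widehat H^V_\alpha(\lambda,\theta)-zI=\lambda S\bigl(I+\lambda^{-1}T_z\bigr),\qquad T_z:=S^{-1}(M_\theta-z),
\end{equation*}
where $T_z$ is the bilateral weighted shift $(T_zu)(n)=c_n(z,\theta)\,u(n+1)$ with weights $c_n(z,\theta)=V(\theta+(n+1)\alpha)-z$. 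Since $\lambda S$ is invertible, $z\in\sigma(\widehat H^V_\alpha(\lambda,\theta))$ if and only if $-\lambda\in\sigma(T_z)$, so everything reduces to computing $\sigma(T_z)$.

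I would then use the classical description of the spectrum of a bilateral weighted shift with bounded weights: $\sigma(T_z)=\{w\in\mathbb C:\rho_-(z)\le|w|\le\rho_+(z)\}$ (a circle when $\rho_-=\rho_+$, a closed disc when $\rho_-=0$), where $\rho_+(z)$ is the spectral radius of $T_z$, so $\rho_+(z)=\lim_k\|T_z^k\|^{1/k}=\lim_k\bigl(\sup_m\prod_{j=m+1}^{m+k}|V(\theta+j\alpha)-z|\bigr)^{1/k}$, and $\rho_-(z)$ equals the reciprocal of the spectral radius of $T_z^{-1}$ when $T_z$ is invertible and $0$ otherwise; in particular $\rho_-(z)=0$ precisely when $\inf_n|c_n(z,\theta)|=0$. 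The crucial observation, which at the same time yields $\theta$-independence, is a rigidity principle: for each fixed $k$ the function $\theta\mapsto\sup_m\prod_{j=m+1}^{m+k}|V(\theta+j\alpha)-z|$ is a supremum of continuous functions, hence lower semicontinuous, and is invariant under $\theta\mapsto\theta+\alpha$; by minimality of the rotation a lower semicontinuous invariant function is constant, so this supremum equals $\sup_{\vartheta\in\mathbb T^d}\prod_{j=0}^{k-1}|V(\vartheta+j\alpha)-z|=\exp\bigl(\sup_{\vartheta}S_kg_z(\vartheta)\bigr)$, where $g_z:=\log|V(\cdot)-z|$ and $S_kg_z:=\sum_{i=0}^{k-1}g_z(\cdot+i\alpha)$. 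The same argument applied to the infimum of the same continuous functions (now upper semicontinuous and invariant, hence constant) handles the inner radius.

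It remains to evaluate $\lim_k\tfrac1k\sup_\vartheta S_kg_z(\vartheta)$ and the corresponding limit with an infimum. Since $g_z\in L^1(\mathbb T^d)$ by hypothesis, $\int_{\mathbb T^d}\tfrac1kS_kg_z=G_d(z)$, so $\sup_\vartheta\tfrac1kS_kg_z(\vartheta)\ge G_d(z)$; on the other hand $g_z\le g_z^{(M)}:=\max(g_z,-M)$, which is continuous, so by unique ergodicity $\sup_\vartheta\tfrac1kS_kg_z(\vartheta)\le\sup_\vartheta\tfrac1kS_kg_z^{(M)}(\vartheta)\to\int g_z^{(M)}$ as $k\to\infty$, and $\int g_z^{(M)}\downarrow G_d(z)$ by monotone convergence; hence $\rho_+(z)=e^{G_d(z)}$ for every $\theta\in\mathbb T^d$. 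For the inner radius: if $z\in R(V)$ then $\inf_n|c_n(z,\theta)|=\operatorname{dist}(z,R(V))=0$ by minimality, so $\rho_-(z)=0$ and $\sigma(T_z)=\{|w|\le e^{G_d(z)}\}$; if $z\notin R(V)$ then $g_z$ is continuous (as $|V-z|\ge\operatorname{dist}(z,R(V))>0$) and unique ergodicity gives $\inf_\vartheta\tfrac1kS_kg_z(\vartheta)\to G_d(z)$, so $\rho_-(z)=e^{G_d(z)}=\rho_+(z)$ and $\sigma(T_z)=\{|w|=e^{G_d(z)}\}$. Therefore $z\in\sigma(\widehat H^V_\alpha(\lambda,\theta))\iff-\lambda\in\sigma(T_z)\iff\rho_-(z)\le|\lambda|\le\rho_+(z)$, which reads $|\lambda|=e^{G_d(z)}$, i.e. $z\in P_\lambda$, when $z\notin R(V)$, and $|\lambda|\le e^{G_d(z)}$, i.e. $G_d(z)\ge\log|\lambda|$, i.e. $z\in P_\lambda\cup C_\lambda$, when $z\in R(V)$; combining the two cases, $\sigma(\widehat H^V_\alpha(\lambda,\theta))=S_\lambda$ for every $\theta\in\mathbb T^d$.

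\textbf{Main obstacle.} The crux is the identification of the radii, and in particular the upper bound $\rho_+(z)\le e^{G_d(z)}$ in the regime $z\in R(V)$, where $g_z=\log|V(\cdot)-z|$ genuinely takes the value $-\infty$: unique ergodicity of the rotation gives uniform ergodic averages only for continuous observables, so one must pass through the continuous truncations $g_z^{(M)}$, and this is exactly where the standing hypothesis $\log|z-V(\cdot)|\in L^1(\mathbb T^d)$ is indispensable, via $\int g_z^{(M)}\downarrow G_d(z)$. A secondary technical point is the appeal to the \emph{filled} annulus (resp.\ disc) shape of $\sigma(T_z)$: the inclusion $\sigma(T_z)\subseteq\{\rho_-\le|w|\le\rho_+\}$ is immediate from the two spectral-radius computations, but the reverse inclusion is the substantive input from weighted-shift operator theory, to be supplied either by citation or by constructing approximate eigenvectors supported on long blocks.
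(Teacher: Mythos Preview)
Your argument is correct and takes a genuinely different route from the paper. The paper proves the two inclusions separately and by very different means: $\sigma\subset S_\lambda$ via an explicit construction of the inverse (their Appendix~A), $P_\lambda\subset\sigma$ via Weyl's criterion (their Lemma~3.1), and --- this is where they spend most of the effort --- $C_\lambda\subset\sigma$ via a Gordon-type rational approximation for Liouvillean $\alpha$ (Proposition~3.3) combined with a weak-continuity-in-frequency argument (Proposition~3.2) to pass to arbitrary rationally independent $\alpha$. Your factorization $\widehat H-z=\lambda S(I+\lambda^{-1}T_z)$ reduces everything to the single classical fact that the spectrum of an injective bilateral weighted shift is a full annulus (Kelley, Ridge; see Shields' survey), together with the computation of the two radii by unique ergodicity. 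In effect, the connectedness of $\sigma(T_z)$ replaces the entire Gordon $+$ continuity machinery.

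What your approach buys: it is shorter, and it never uses the H\"older regularity of $V$ --- continuity together with $\log|z-V|\in L^1$ suffices --- so your proof is strictly more general than what the paper states. What the paper's approach buys: it is self-contained and, as the authors emphasize, the weak-continuity and Gordon tools they develop for the non-self-adjoint setting are of independent interest and may apply to operators whose Aubry dual is not a first-order (i.e.\ weighted-shift) operator, where your reduction is unavailable.

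One small point worth making explicit when you write this up: the annulus theorem is usually stated for weighted shifts with \emph{all} weights nonzero. For $z\in R(V)$ and exceptional $\theta$ with $V(\theta+n_0\alpha)=z$, some $c_n$ vanish; you can either invoke $\theta$-independence of the spectrum (minimality of the base dynamics, as in the paper's Lemma~2.1) to pass to a generic phase, or observe that a vanishing weight splits $T_z$ into a direct sum of one-sided weighted shifts, each of whose spectrum is a full disk of the same radius $e^{G_d(z)}$ by the same ergodic computation (forward and backward orbits are still dense).
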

\begin{remark}
If $\lambda=0$, it is easy to prove  that $\sigma(\widehat{H}^V_{\alpha}(\lambda, \theta))=S_{0}$, so we will always assume $\lambda\neq0.$ 
\end{remark}

Sarnak \cite{Sarnak} and Boca \cite{B} both used similar ideas to study (\ref{operator}) by exploring (\ref{dual}). 	As observed in \cite{B,Sarnak}, it is easy to establish 
$$
\sigma(\widehat{H}^V_{\alpha}(\lambda,\omega)) \subset S_{\lambda}=P_{\lambda}\cup C_{\lambda}.
$$
However, to prove the converse, particularly to prove  $C_{\lambda} \subset \sigma(\widehat{H}^V_{\alpha}(\lambda,\omega))$,  is  really challenging.  For this task, Sarnak \cite{Sarnak} used smooth approximation and relied on the Diophantine properties of frequency and distribution of zeros of analytic functions, while Boca's proof  \cite{B} utilizes $C^*$ algebra and relies on the profile of the spectrum. In this paper, we introduce a new method to prove $C_{\lambda} \subset 
\sigma(\widehat{H}^V_{\alpha}(\lambda,\omega)) $, which can be applied to more general operators.  More precisely,  we first establish the result  for Liouvillean frequency by modified Gordon's argument (Proposition \ref{gordon}), then we use the weak continuity of the spectrum (Proposition \ref{jian}) to show that the result holds for all rationally independent frequencies.  Notice that the classical Gordon's argument \cite{AYZ,Gordon}  is a main tool  for excluding the point spectra for self-adjoint  Schr\"odinger operators (see also Remark \ref{go-cla}). Here we give  a new application of it in non-self-adjoint operators. 
On the other hand, to obtain the weak continuity of the spectrum (Proposition \ref{jian}), one would follow the  classical  
argument that the spectrum is continuous in Hausdorff topology \cite{simon_vanmouche_avron,Ch} (in the self-adjoint case). However, one may meet the typical problem for the non-self-adjoint operator, that is 
\begin{equation}\label{inq}
\|(H-z)^{-1}\|\geq \frac{1}{dist(z,\sigma(H))}
\end{equation}
while in the self-adjoint case, we always have equality.  Previous work has developed the whole pesudo-spectrum theory \cite{Bott,Davies2,Tre1} to deal with this difficulty. However,  we will, avoiding \eqref{inq}, conclude the weak continuity argument by Weyl's criterion (for non-self-adjoint operators). 
 We remark that assuming only H\"{o}lder continuity for  potentials is an advantage of our argument.

	\subsection{Some explicit examples}\label{exam}
In this section, we provide some exactly solvable  models. We believe that these examples are of interest on their own,  but the initial purpose of finding them was to provide a natural completion for the study of \eqref{zd}.
Let us begin by considering the simplest case, where the Fourier coefficient of  $V(\cdot)$  only has one term:

	\begin{example}\label{app2}
		Assume that $\alpha \in\mathbb{T}\backslash\mathbb{Q}$ and $\omega\in\mathbb{T}$.	  Consider the operator
		\begin{equation*}
			(H_{\alpha}(\lambda,\omega) u)( n)=u(n-1)+\lambda e^{2\pi i(\omega+ n\alpha)} u(n) , \quad n \in \mathbb{Z}.
		\end{equation*}
		Then

		\begin{equation*}
			\sigma({H}^V_{\alpha}(\lambda, \omega))=
			\begin{cases}
				\mathbb{S}^1, & |\lambda|<1,\\
				\mathbb{D}, & |\lambda|=1,\\
				\lambda\mathbb{S}^1,& |\lambda|>1,		
			\end{cases}
		\end{equation*}
		for all $\omega\in\mathbb{T}.$

	\end{example}

		\begin{remark} 	 The spectrum profile in Example \ref{app2} is depicted in Figure \ref{tu1}.
We remark that the spectrum is not continuous in  the coupling constant $\lambda$, which reflects the spectrum of non-self-adjoint operator might be highly unstable.        	\end{remark}	

	\begin{figure}[h!]
		\centering  
		
		\subfloat[$|\lambda|$<1]{
			\begin{tikzpicture}[baseline,remember picture]
				\draw (0, 0) circle (1);		
			\end{tikzpicture}
		}
		\hspace{10pt}  	
		\subfloat[$|\lambda|$=1]{
			\begin{tikzpicture}[baseline,remember picture]
				\fill[gray!30] (0, 0) -- (1, 0) arc (0:180:1) -- (0, 0) -- (0, 0) arc (180:0:0);
				\fill[gray!30] (0, 0) -- (1, 0) arc (360:180:1) -- (0, 0) -- (0, 0) arc (180:360:0);	
				\draw (0, 0) circle (1);		
			\end{tikzpicture}
		}
		\subfloat[$|\lambda|$>1]{
			\begin{tikzpicture}[baseline,remember picture]
				\draw(0,0) circle (1.39);	
			\end{tikzpicture}
		}
		\hspace{10pt}
		\caption{\quad $d=1$}
		\label{tu1}
	\end{figure}
	
	As its high-dimension generalization, we have the following:
	
	\begin{example}\label{app3}
		Assume that $\alpha \in\mathbb{T}^d$ is rationally independent with $d>1$, and $\omega\in\mathbb{T}$.	  Consider the operator
		\begin{equation*}
			(H_{\alpha}(\lambda,\omega) u)( n)=\sum_{i=1}^{d}u(n-e_i)+\lambda e^{2\pi i(\omega+ \langle n,\alpha \rangle )} u(n) , \quad n \in \mathbb{Z}^d.
		\end{equation*}
		Then  $$\sigma({H}_{\alpha}(\lambda, \omega))=\begin{cases}
		\{z\in\mathbb{C}\big||z|\leq d\}, & |\lambda|\leq\lambda_1,\\
		\{z\in\mathbb{C}\big|G_d^{-1}(\log|\lambda|)\leq|z|\leq d\}, & \lambda_1<|\lambda|<d,\\
		\{z\in\mathbb{C}\big||z|= |\lambda|\},& |\lambda|\geq d,		
	\end{cases}
	$$
	for all $ \omega\in\mathbb{T}$, where $\lambda_1=e^{G_d(0)}$ and $G_d^{-1}(\cdot)$ is the inverse function of $G_d(|z|)$.
	\end{example}
	
	\begin{remark}
	The case $d>1$ is interesting, this gives another example of non-self-adjoint  quasi-periodic operator, which possesses non-simply-connected two-dimensional spectrum, as shown in Figure \ref{tu3} (B). 
	\end{remark}	
\begin{figure}[h!]
	\centering  
	
	\subfloat[$|\lambda|\leq$$\lambda_1$]{
		\begin{tikzpicture}[baseline,remember picture]
			\fill[gray!30] (0, 0) -- (1, 0) arc (0:180:1) -- (0, 0) -- (0, 0) arc (180:0:0);
			\fill[gray!30] (0, 0) -- (1, 0) arc (360:180:1) -- (0, 0) -- (0, 0) arc (180:360:0);	
			\draw (0, 0) circle (1);	
		\end{tikzpicture}
	}
	\hspace{10pt}  
	\subfloat[$\lambda_1$<$|\lambda|$<$d$]{
		\begin{tikzpicture}[baseline,remember picture]
			\fill[gray!30] (0, 0) -- (1, 0) arc (0:180:1) -- (-0, 0) -- (-0.3, 0) arc (180:0:0.3);
			\fill[gray!30] (0, 0) -- (1, 0) arc (360:180:1) -- (-0, 0) -- (-0.3, 0) arc (180:360:0.3);
			
			\draw (0, 0) circle (0.3);
			\draw (0, 0) circle (1);	
		\end{tikzpicture}
	}
	\subfloat[$|\lambda|\geq$$d$]{
		\begin{tikzpicture}[baseline,remember picture]
			\draw(0,0) circle (1.39);	
		\end{tikzpicture}
	}
	\hspace{10pt}
	\caption{\quad $d>1$}
	\label{tu3}
\end{figure}

The following example not only shows that the non-degeneracy condition \eqref{non-dege} in Corollary \ref{mfre} is necessary, but also   shows that the spectrum can be independent of the dimension $d$.
\begin{example}\label{two6}
		Assume that $\alpha \in\mathbb{T}^d$ is rationally independent and $\omega\in\mathbb{T}$.	  Consider the operator
	\begin{equation*}
		(H_{\alpha}(\lambda,\omega) u)( n)=u(n_1-1,\cdots,n_d-1)+\lambda e^{2\pi i(\omega+ \langle n,\alpha\rangle)} u(n) , \quad n=(n_1,\cdots,n_d) \in \mathbb{Z}^d.
	\end{equation*}Then
	\begin{equation*}
		\sigma({H}_{\alpha}(\lambda, \omega))=
		\begin{cases}
			\mathbb{S}^1, & |\lambda|<1,\\
			\mathbb{D}, & |\lambda|=1,\\
			\lambda\mathbb{S}^1,& |\lambda|>1,		
		\end{cases}
	\end{equation*}
	for all $\omega\in\mathbb{T}.$
\end{example}

For $d = 1$, two-dimensional spectrum occurs in  $P_{\lambda}$ and $\sigma({H}_{\alpha}(\lambda, \omega))=P_{\lambda}$ in the aforementioned examples. In the following, we give an example with $C_{|\lambda|} \neq \emptyset$.

 	\begin{example}\label{two5}
Assume that $\alpha \in\mathbb{T}\backslash\mathbb{Q}$ and $\omega\in\mathbb{T}$.	  Consider the operator
 	\begin{equation*}
 		(H_{\alpha}(\lambda,\omega) u)( n)=2u(n-2)+u(n-1)+2 e^{2\pi i(\omega+ n\alpha)} u(n) , \quad n \in \mathbb{Z}.
 	\end{equation*}
 	Then
 	$$\sigma({H}_{\alpha}(\lambda, \omega))=R(V)\cup {D}
 	$$
 	for all $ \omega\in\mathbb{T}$, where $D$ is the regime surround by $$\{e^{2\pi i\theta}+2e^{4\pi i\theta}|\theta\in[\frac{1}{2\pi}\arccos(-\frac{1}{4}),1-\frac{1}{2\pi}\arccos(-\frac{1}{4})]\}.$$ 
 \end{example}
 
 	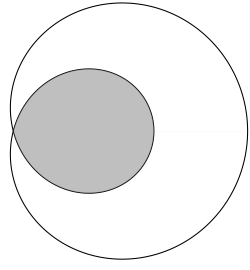
\begin{figure}[h!]
 	\centering  
 
 \begin{tikzpicture}
 	\begin{polaraxis}[
 		hide axis
 		]
 		\addplot[save path=\pathA, mark=none,domain=0:180,samples=600, black] 
 		{1+4*cos(x)};
 		\addplot[save path=\pathB,  mark=none,domain=180:360,samples=600, black] 
 		{1+4*cos(x)};
 		\clip[reuse path=\pathA];
 		\fill[gray,opacity=0.5,reuse path=\pathB];
 	\end{polaraxis}
 \end{tikzpicture}
    	\caption{ Two-dimensional spectrum of second-order operator }
    \label{tu6}
\end{figure}

As  mentioned before, we only need to assume that $V(\cdot)$ is H\"{o}lder continuous. In the following, we provide an explicit example  of H\"{o}lder continuous but not differentiable  potential such that its spectrum exhibits a real-complex phase transition.
	\begin{example}\label{non-analytic}
		Assume that $\alpha \in\mathbb{T}\backslash\mathbb{Q}$ and $\omega\in\mathbb{T}$.	  Consider the operator
		\begin{equation*}
			(H_{\alpha}(\lambda,\omega) u)( n)= \sum_{k\in\mathbb{Z}}v_ku(n-k)+\lambda e^{2\pi i(\omega+ n\alpha)} u(n) , \quad n \in \mathbb{Z},
		\end{equation*}    
	where $v_k$ is the Fourier coefficient of \begin{equation*}
			V(\theta)=
			\begin{cases}
				\theta, & 0\leq\theta<\frac{1}{2},\\
				1-\theta, &\frac{1}{2}\leq\theta<1.	
			\end{cases}
		\end{equation*}
		Then for all $ \omega\in\mathbb{T}$,
		$$\sigma({H}_{\alpha}(\lambda, \omega)=\begin{cases}
			[0,\frac{1}{2}], & |\lambda|\leq\lambda_2,\\
			\{z\in[0,\frac{1}{2}]\big|G_1(z)> \log|\lambda|\}\cup\{z\in\mathbb{C}\big|G_1(z)= \log|\lambda|\}, & \lambda_2<|\lambda|<\lambda_3,\\
			\{z\in\mathbb{C}\big|G_1(z)= \log|\lambda|\},& |\lambda|\geq\lambda_3,		
		\end{cases}
		$$
 where $\lambda_2=e^{\min_{z\in[0,{1}/{2}]} G_1(z)}$, $\lambda_3=e^{G_1(0)}$. Moreover, we can compute $G_1(z)$ exactly as follows
		\begin{equation*}
			G_1(z)=\left\{
			\begin{aligned}
				&2\Re(z)\log(|z|)-(\Re(z)-\frac{1}{2})\log(\Re(z)-\frac{1}{2})^2-1,\quad \Im(z)=0,\\
				&2\Re(z)\log(|z|)-(\Re(z)-\frac{1}{2})\log((\Re(z)-\frac{1}{2})^2+\Im(z))+\\&2\Im(z)(\arctan(\frac{\Im(z)}{2\Im(z)^2+2\Re(z)^2-\Re(z)}))-1, \quad \Im(z)\neq0.\\
			\end{aligned}
			\right.
		\end{equation*}
	\end{example}
			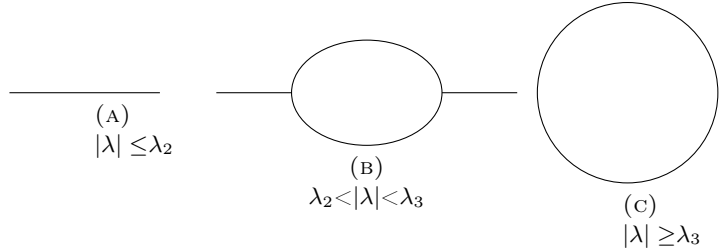
\begin{figure}[h!]
		\centering  
		\subfloat[$|\lambda|\leq$$\lambda_2$]{
			\begin{tikzpicture}[baseline,remember picture]
				\draw[-] (-1,0)--(1,0);
			\end{tikzpicture}
		}
		\hspace{10pt}  
		\subfloat[$\lambda_2$<$|\lambda|$<$\lambda_3$]{
			\begin{tikzpicture}[baseline,remember picture]
				\draw[-] (2,0)--(3,0);
				\draw[black] (4,0) ellipse (1 and 0.7);
				\draw[-] (5,0)--(6,0);
			\end{tikzpicture}
		}
		\subfloat[$|\lambda|\geq$$\lambda_3$]{
			\begin{tikzpicture}[baseline,remember picture]
				\draw(0,0) circle (1.2);	
			\end{tikzpicture}
		}
		\hspace{10pt}
		\caption{\quad Spectrum of non-analytic potential }
		\label{tu4}
	\end{figure}

\section{Preliminaries}
\subsection{Continued fraction expansion}
Let $\alpha \in\mathbb{T} \backslash \mathbb{Q}, a_0=0$, and let $\alpha_0=\alpha$. Inductively for $j \geq 1$,
$$
a_j=[\alpha_{j-1}^{-1}], \quad \alpha_j=\alpha_{j-1}^{-1}-a_j=\{\frac{1}{\alpha_{j-1}}\} .
$$
Let $p_0=0, p_1=1, q_0=1, q_1=a_1$, then we define inductively $p_j=a_j p_{j-1}+p_{j-2}, q_k=a_k q_{k-1}+q_{k-2}$. The sequence $(q_n)$ are the denominators of the best rational approximations of $\alpha$ and we have
$$
\forall 1 \leq k<q_n, \quad\|k \alpha\|_{\mathbb{T}} \geq\|q_{n-1} \alpha\|_{\mathbb{T}},
$$
and
$$
\|q_n \alpha\|_{\mathbb{T}} \leq \frac{1}{q_{n+1}},
$$
where
$$
\|x\|_{\mathbb{T}}=\inf _{p \in \mathbb{Z}}|x-p| .
$$
Let $$\beta(\alpha):=\limsup _{n \rightarrow \infty} \frac{\ln q_{n+1}}{q_n}.$$ For any $\gamma>0$, we have $\{\alpha|\beta(\alpha)=\gamma\}$ is dense in $\mathbb{T}$.

\subsection{Regular value Theorem}

We recall the  concept of regular value and the  basic regular value Theorem:

\begin{definition}[Regular value  \cite{Poschel}]\label{pos2}
	Let $f:A\rightarrow F$ be a continuous differentiable map from an open subset $A$ of a Banach space $E$ into  another Banach space ${F}$. A point $c$ in $\mathbb{F}$ is a regular value if for every point $x$ in the level set $M_c=\{x\in A: f(x)=c\}$ there exists a splitting $E=E_h\oplus E_v$, such that $d_xf|_{E_v}$ is a linear isomorphism between $E_v$ and $F$. In particular, if $E=\R^n$, $F=\R$, then $c$ is a regular value of $f$ if and only if $\nabla f\neq 0$ at every point $x$ in $M_c$.
\end{definition}

\begin{definition}[Real analytic submanifold \cite{Poschel}]
	Let $E$, $F$ be real Banach space, let $\C E$, $\C F$ be their complexifications, and let $U\subset E$ be open. A map $f:U\rightarrow F$ is real analytic on $U$, if for each point in $U$ there is a neighborhood $V\subset\C E$ and an analytic map $g:V\rightarrow \C F$, such that $f=g$ on $U\cap V.$ A subset $M$ of a Banach space $E$ is a real analytic submanifold of $E$, if for every point $x\in M$, there is a real analytic coordinate system $\phi:U\rightarrow V$ between an open neighborhood $U$ of $x$ in $E$ and an open subset $V$ of another Banach space $F$ and a splitting $F=F_h\oplus F_v$ such that $\phi (U\cap M)=V\cap F_h.$   
\end{definition}

\begin{theorem}[Regular value Theorem \cite{Poschel}]\label{pos1}
	Suppose $f:A\rightarrow F$ is a real analytic map from an open set of a Banach space $E$ into another Banach space $F$, If $c\in{F}$ is a regular value of $F$, then $M_c=\{x\in A: f(x)=c\}$ is a real analytic submanifold of E. Moreover, $T_xM_c=ker d_xf$ at every point $x$ in $M_c$. 
\end{theorem}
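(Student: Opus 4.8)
The plan is to reduce the global statement to a local normal form near each point of $M_c$ and then invoke the real analytic implicit function theorem in Banach spaces (itself a standard companion result, e.g.\ in Pöschel). Fix $x_0\in M_c$. By the definition of a regular value there is a splitting $E=E_h\oplus E_v$ with $d_{x_0}f|_{E_v}\colon E_v\to F$ a linear isomorphism; writing a point of $E$ near $x_0$ as $x=u+w$ with $u\in E_h$, $w\in E_v$, and $x_0=u_0+w_0$, set $g(u,w)=f(u+w)-c$. Then $g$ is real analytic near $(u_0,w_0)$, $g(u_0,w_0)=0$, and $\partial_w g(u_0,w_0)=d_{x_0}f|_{E_v}$ is invertible.

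First I would establish the implicit solvability. Existence of a continuous $w=\psi(u)$ with $\psi(u_0)=w_0$ and $g(u,\psi(u))=0$ is the usual Banach fixed point argument: $g(u,w)=0$ is equivalent to $w=w-[\partial_w g(u_0,w_0)]^{-1}g(u,w)=:\Phi(u,w)$, and for $u$ near $u_0$ the map $w\mapsto\Phi(u,w)$ is a contraction on a small closed ball. To obtain that $\psi$ is \emph{real analytic}, I would complexify: $f$ extends to a holomorphic map $\tilde f$ on an open subset of $\C E$, hence $g$ extends to a holomorphic $\tilde g$, and the identical contraction argument in the complex Banach space — with the contraction depending holomorphically on the complex parameter $u$ — produces a holomorphic solution $\tilde\psi$, holomorphy of the fixed point following because the Picard iterates are holomorphic in $u$ and converge locally uniformly. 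Uniqueness of the continuous solution identifies $\tilde\psi|_{\text{real}}$ with $\psi$, so $\psi$ is real analytic.

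Next I would read off the submanifold structure. On a neighborhood $U$ of $x_0$ one has $M_c=\{u+\psi(u):u\in U_h\}$, the graph of a real analytic map, so $\phi(u+w)=(u,\,w-\psi(u))$ is a real analytic coordinate system on $U$ (inverse $(u,v)\mapsto u+v+\psi(u)$), and with $F_h=E_h$, $F_v=E_v$ it satisfies $\phi(U\cap M_c)=V\cap F_h$. Since $x_0\in M_c$ was arbitrary, $M_c$ is a real analytic submanifold of $E$. For the tangent space, differentiate $f(u+\psi(u))=c$: this gives $d_xf\,(h+d_u\psi\,h)=0$ for all $h\in E_h$, so $T_xM_c=\{h+d_u\psi\,h:h\in E_h\}\subset\ker d_xf$; conversely if $d_xf(h+k)=0$ with $h\in E_h$, $k\in E_v$, subtracting the previous identity yields $d_xf(k-d_u\psi\,h)=0$ with $k-d_u\psi\,h\in E_v$, and injectivity of $d_xf|_{E_v}$ forces $k=d_u\psi\,h$. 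Hence $\ker d_xf=T_xM_c$.

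The main obstacle is precisely the analyticity clause of the implicit function theorem: the fixed-point construction delivers only a continuous (and, by classical bootstrapping, as-smooth-as-$g$) solution, and promoting it to \emph{real analytic} genuinely requires the passage to the complexification and the verification that the Banach-valued fixed point inherits holomorphy. Everything else — the splitting reduction, the graph chart, and the tangent-space identification — is linear algebra or routine.
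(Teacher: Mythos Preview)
The paper does not prove this theorem at all: it is stated in the Preliminaries as a cited result from P\"oschel's book and used as a black box later (in the proofs of Corollary~\ref{pt}(3) and Corollary~\ref{twod1}(3)). So there is no ``paper's own proof'' to compare against.

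That said, your sketch is a correct reconstruction of the standard argument: reduce to the implicit function theorem via the splitting $E=E_h\oplus E_v$, run the contraction-mapping construction, promote the solution to real analytic by complexifying and observing that the Picard iterates are holomorphic and converge locally uniformly, then read off the graph chart and the tangent space. The tangent-space step is clean (your subtraction argument using injectivity of $d_xf|_{E_v}$ is exactly right). Your self-identified ``main obstacle''---analyticity of the implicit function---is handled the way it must be in the Banach setting, and is indeed where the substance lies; the rest is bookkeeping. In short: nothing to compare, but nothing to object to either.
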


\subsection{Some general facts on Aubry duality}
Aubry duality \cite{AA} was first developed for the analysis of the almost Mathieu operator, but the approach can be extended to other potentials, even non-self-adjoint quasi-periodic operators.  	Previous proofs   of Aubry duality preserving the spectrum  rely on the self-adjointness of the operator \cite{AA,Jit,Sim2}. 
The aim of this section is to introduce the method for exploring the operator and its duality using decomposable operators, without requiring self-adjointness.

We consider the operator$$
H_{\lambda, \alpha}=\int_{\mathbb{T}}^{\oplus} H^V_{\alpha}(\lambda,\omega) \mathrm{d} \omega: \int_{\mathbb{T}}^{\oplus} \ell^2(\mathbb{Z}^d) \mathrm{d} \omega\rightarrow \int_{\mathbb{T}}^{\oplus} \ell^2(\mathbb{Z}^d) \mathrm{d} \omega
$$
given by
$$
(H_{\lambda, \alpha} \varphi)(\omega, n)=\sum_{k\in\mathbb{Z}^d}v_k\varphi(\omega, n-k)+\lambda e^{2\pi i(\omega+\langle n,\alpha\rangle)} \varphi(\omega, n) .
$$

The duality transform
$$
\mathcal{U}:\int_{\mathbb{T}}^{\oplus} \ell^2(\mathbb{Z}^d) \mathrm{d} \omega \rightarrow \int_{\mathbb{T}^d}^{\oplus}\ell^2(\mathbb{Z}) \mathrm{d} \theta
$$
 given by
$$
(\mathcal{U} \varphi)(\theta, n)=\sum_{m \in \mathbb{Z}^d} \int_{\mathbb{T}} e^{-2 \pi i\langle\theta+n \alpha, m\rangle} e^{-2 \pi i n \eta} \varphi(\eta, m) \mathrm{d} \eta 
$$
is unitary by  definition. Moreover, one can check that
$$(\widehat{H}_{\lambda, \alpha}\psi)(\theta,n):=\lambda\psi(\theta,n-1)+V(\theta+n\alpha)\psi(\theta,n)=(\mathcal{U}H_{\lambda, \alpha}\mathcal{U}^{-1}\psi)(\theta,n).$$

Consider the direct integral $H_{\lambda, \alpha}=\int_{\mathbb{T}}^{\oplus} H^V_{\alpha}(\lambda,\omega) \mathrm{d} \omega$ and $\widehat{H}_{\lambda, \alpha}=\int_{\mathbb{T}^d}^{\oplus} \widehat{H}^V_{\alpha}(\lambda, \theta) \mathrm{d} \theta$. Since $\mathcal{U}$ is an unitary operator, we have
$$\sigma(\int_{\mathbb{T}}^{\oplus} H^V_{\alpha}(\lambda,\omega) \mathrm{d} \omega)=\sigma(\int_{\mathbb{T}^d}^{\oplus} \widehat{H}^V_{\alpha}(\lambda, \theta) \mathrm{d} \theta).$$

Since the potential is quasi-periodic, i.e. the base dynamic is minimal, then  the spectrum of operator is independent of phase.
\begin{lemma}[\cite{lim}]\label{ind}
	There exist some $\Sigma_1,\Sigma_2\subset\mathbb{C}$ such that $\sigma(H^V_{\alpha}(\lambda,\omega))=\Sigma_1$ for all $\omega\in\mathbb{T}$ and $\sigma(\widehat{H}^V_{\alpha}(\lambda, \theta))=\Sigma_2$ for all $\theta\in\mathbb{T}^d.$
\end{lemma}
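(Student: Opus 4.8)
The plan is to exploit that both families are covariant under a minimal translation action whose intertwiners are \emph{unitary} operators on the underlying $\ell^{2}$ space; consequently the spectrum — and in fact the whole resolvent norm — is constant along a dense orbit of phases, and a soft norm-continuity argument then propagates this to every phase, sidestepping the usual non-self-adjoint subtleties.

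First I would record the covariance identities. Writing $(T_m u)(n)=u(n-m)$ for the translation on $\ell^{2}(\mathbb{Z}^{d})$, a direct computation shows
\[
T_m\,H^V_{\alpha}(\lambda,\omega)\,T_m^{-1}=H^V_{\alpha}\!\left(\lambda,\,\omega-\langle m,\alpha\rangle\right),\qquad m\in\mathbb{Z}^{d},
\]
and writing $(Su)(n)=u(n-1)$ for the shift on $\ell^{2}(\mathbb{Z})$,
\[
S\,\widehat{H}^V_{\alpha}(\lambda,\theta)\,S^{-1}=\widehat{H}^V_{\alpha}(\lambda,\theta-\alpha),\qquad \theta\in\mathbb{T}^{d}.
\]
Since $T_m$ and $S$ are unitary, this yields $\sigma(H^V_{\alpha}(\lambda,\omega))=\sigma(H^V_{\alpha}(\lambda,\omega-\langle m,\alpha\rangle))$ and $\sigma(\widehat{H}^V_{\alpha}(\lambda,\theta))=\sigma(\widehat{H}^V_{\alpha}(\lambda,\theta-k\alpha))$ for all $m\in\mathbb{Z}^{d}$, $k\in\mathbb{Z}$, and moreover the resolvent norms $\|(H^V_{\alpha}(\lambda,\cdot)-z)^{-1}\|$ and $\|(\widehat{H}^V_{\alpha}(\lambda,\cdot)-z)^{-1}\|$ are constant along these orbits. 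Because $(1,\alpha)$ is rationally independent, the orbit of any $\omega_0$ under $\omega\mapsto\omega-\langle m,\alpha\rangle$ is dense in $\mathbb{T}$, and the orbit of any $\theta_0$ under $\theta\mapsto\theta-k\alpha$ is dense in $\mathbb{T}^{d}$ by Weyl equidistribution.

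Next I would note that $\omega\mapsto H^V_{\alpha}(\lambda,\omega)$ and $\theta\mapsto\widehat{H}^V_{\alpha}(\lambda,\theta)$ are norm-continuous: in the first case the difference of two members is a diagonal operator of norm $|\lambda|\,|e^{2\pi i\omega}-e^{2\pi i\omega'}|$, in the second case a multiplication operator of norm at most the (H\"{o}lder) modulus of continuity of $V$ at $|\theta-\theta'|$. Then I would combine the two ingredients as follows; denote by $\Sigma_1$ the common spectrum along the dense orbit and fix an arbitrary phase $\omega$. If $z\notin\Sigma_1$, the resolvent $(H^V_{\alpha}(\lambda,\omega')-z)^{-1}$ exists with a norm bound $M$ \emph{independent} of the orbit point $\omega'$; choosing $\omega'$ in the orbit with $\|H^V_{\alpha}(\lambda,\omega)-H^V_{\alpha}(\lambda,\omega')\|<M^{-1}$ and running the Neumann series shows $z\notin\sigma(H^V_{\alpha}(\lambda,\omega))$, hence $\sigma(H^V_{\alpha}(\lambda,\omega))\subseteq\Sigma_1$. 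Conversely, if $z\notin\sigma(H^V_{\alpha}(\lambda,\omega))$, the same Neumann-series perturbation — now from $\omega$ to a nearby orbit point — gives $z\notin\sigma(H^V_{\alpha}(\lambda,\omega'))=\Sigma_1$, hence $\Sigma_1\subseteq\sigma(H^V_{\alpha}(\lambda,\omega))$. The argument for $\widehat{H}^V_{\alpha}$ is verbatim the same, with $S$, $\mathbb{T}^{d}$ and the modulus of continuity of $V$ in place of $T_m$, $\mathbb{T}$ and the elementary estimate.

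The one place where non-self-adjointness could be expected to bite is the failure of lower semicontinuity of the spectrum under norm perturbations, i.e.\ the strict inequality in \eqref{inq}; I regard this as the main conceptual point, but it is exactly what the covariance observation defuses: along the orbit the operators are \emph{genuinely unitarily equivalent}, so the resolvent norm is literally constant rather than merely finite, and the uniform bound $M$ needed to run the Neumann series comes for free. Everything else is the standard density-plus-continuity routine.
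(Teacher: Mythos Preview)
Your argument is correct. The covariance identities are right, the resolvent norm is indeed constant along orbits by unitary equivalence, the norm-continuity of both families is as you describe, and the two-sided Neumann-series argument closes the gap from a dense orbit to all phases. Your remark that the orbit-constant resolvent norm is what sidesteps the non-self-adjoint pathology (the failure of the resolvent bound to control the distance to the spectrum) is exactly the point.

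As for comparison: the paper does not actually prove Lemma~\ref{ind}. It is simply quoted from the reference \cite{lim} (Chandler--Wilde and Lindner), which develops a general theory of limit operators for band-dominated operators on $\ell^{2}(\mathbb{Z}^{d})$ and obtains phase-independence of the spectrum for minimal underlying dynamics as a corollary of that machinery. Your proof is the direct, elementary route: covariance under a minimal translation plus norm-continuity in the phase. This is more self-contained and transparent than invoking the full limit-operator framework; conversely, the cited theory applies in much greater generality (non-quasi-periodic potentials, more general hopping, other notions of spectrum). For the purposes of this paper your argument is entirely adequate, and indeed essentially the same reasoning reappears later in the proof of Proposition~\ref{cor}, where the authors use the same Neumann-series perturbation to control $\sup_\theta\|(z-\widehat{H}^V_{\alpha}(\lambda,\theta))^{-1}\|$.
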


As a consequence, we have the following:

\begin{corollary}\label{rot}
	$\sigma(H^V_{\alpha}(\lambda,\omega))=\sigma(H^V_{\alpha}(|\lambda|,\omega))$ for all $\lambda\in\mathbb{C}$ and $\omega \in \mathbb{T}.$
\end{corollary}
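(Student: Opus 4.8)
The plan is to reduce the claim to Lemma \ref{ind} via a simple gauge (conjugation) argument. Write $\lambda = |\lambda| e^{2\pi i t}$ for some $t \in \mathbb{T}$ (if $\lambda = 0$ there is nothing to prove, so assume $\lambda \neq 0$). The key observation is that multiplying the coupling constant by the phase $e^{2\pi i t}$ is the same as shifting the phase parameter $\omega$, up to a unitary conjugation on $\ell^2(\mathbb{Z}^d)$. Concretely, since $H^V_\alpha(\lambda,\omega)$ and its dual $\widehat{H}^V_\alpha(\lambda,\theta)$ have the same spectrum (as established in the Aubry-duality discussion preceding the statement), it is cleanest to work with the dual operator $\widehat{H}^V_\alpha(\lambda,\theta)$ given in \eqref{dual}, where $\lambda$ appears as the hopping coefficient.

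First I would exhibit an explicit unitary $W$ on $\ell^2(\mathbb{Z})$ conjugating $\widehat{H}^V_\alpha(|\lambda|e^{2\pi i t},\theta)$ to $\widehat{H}^V_\alpha(|\lambda|,\theta)$. Define $(W u)(n) = e^{2\pi i n t} u(n)$, which is clearly unitary on $\ell^2(\mathbb{Z})$. Then a direct computation gives
\begin{equation*}
(W^{-1}\widehat{H}^V_\alpha(\lambda,\theta) W u)(n) = e^{-2\pi i n t}\bigl(\lambda\, e^{2\pi i (n-1)t} u(n-1) + V(\theta+n\alpha) e^{2\pi i n t} u(n)\bigr) = |\lambda|\, u(n-1) + V(\theta+n\alpha) u(n),
\end{equation*}
using $\lambda e^{-2\pi i t} = |\lambda|$. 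Hence $\widehat{H}^V_\alpha(\lambda,\theta)$ is unitarily equivalent to $\widehat{H}^V_\alpha(|\lambda|,\theta)$, so they have the same spectrum. Combined with the fact that $\sigma(H^V_\alpha(\lambda,\omega)) = \sigma(\widehat{H}^V_\alpha(\lambda,\theta))$ for all $\omega,\theta$ (which follows from the unitarity of the duality transform $\mathcal{U}$ together with Lemma \ref{ind}), we conclude $\sigma(H^V_\alpha(\lambda,\omega)) = \sigma(H^V_\alpha(|\lambda|,\omega))$.

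There is essentially no obstacle here — the argument is a one-line gauge transformation. The only point requiring a word of care is the passage between the original operator and its dual: one must invoke Lemma \ref{ind} to know that all fibers $\widehat{H}^V_\alpha(\lambda,\theta)$ share the same spectrum $\Sigma_2$, and the identity $\sigma(H_{\lambda,\alpha}) = \sigma(\widehat{H}_{\lambda,\alpha})$ for the direct integrals, so that the per-fiber unitary equivalence I constructed at a fixed $\theta$ indeed transfers to the statement about $\sigma(H^V_\alpha(\lambda,\omega))$ for every $\omega$. Alternatively, one can bypass the dual model entirely by conjugating $H^V_\alpha(\lambda,\omega)$ directly: the multiplier $e^{2\pi i(\omega + \langle n,\alpha\rangle)}$ carries the coupling, and absorbing the argument of $\lambda$ into $\omega$ shows $H^V_\alpha(\lambda,\omega)$ is unitarily equivalent (in fact equal after relabeling) to $H^V_\alpha(|\lambda|,\omega + t)$, whose spectrum equals $\sigma(H^V_\alpha(|\lambda|,\omega))$ by Lemma \ref{ind}.
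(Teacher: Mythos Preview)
Your alternative approach at the very end is exactly the paper's intended argument: writing $\lambda=|\lambda|e^{2\pi i t}$, one has the \emph{identity of operators}
\[
H^V_\alpha(\lambda,\omega)=H^V_\alpha(|\lambda|,\omega+t),
\]
since $\lambda e^{2\pi i(\omega+\langle n,\alpha\rangle)}=|\lambda|e^{2\pi i(\omega+t+\langle n,\alpha\rangle)}$, and then Lemma~\ref{ind} gives $\sigma(H^V_\alpha(|\lambda|,\omega+t))=\sigma(H^V_\alpha(|\lambda|,\omega))$. That is the whole proof; the paper records the corollary simply as ``a consequence'' of Lemma~\ref{ind} with no further argument.

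Your main route through the dual model is mathematically correct (the gauge conjugation on $\widehat H^V_\alpha$ is fine), but in the paper's logical order it is circular: the per-fiber equality $\sigma(H^V_\alpha(\lambda,\omega))=\sigma(\widehat H^V_\alpha(\lambda,\theta))$ that you invoke is Proposition~\ref{cor}, which is stated and proved \emph{after} Corollary~\ref{rot}. The Aubry-duality discussion preceding the corollary only yields equality of the spectra of the \emph{direct integrals}, not of the individual fibers. So if you keep the dual-model argument, you should note that it presupposes Proposition~\ref{cor}; better still, just lead with the direct one-line observation above, which needs only Lemma~\ref{ind}.
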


Now we can prove  that the resolvent of the direct integral is equal to the resolvent of the original operator. 

\begin{proposition}\label{cor}
	Assume that $\alpha \in \mathbb{T}^d$ is rationally independent, $V\in C (\T^d,\C)$, $\lambda \in \mathbb{C}$, then we have
	\begin{equation}\label{equal}\rho(\int_{\mathbb{T}^d}^{\oplus} \widehat{H}^V_{\alpha}(\lambda, \theta) \mathrm{d} \theta)=\rho(\widehat{H}^V_{\alpha}(\lambda, \tilde{\theta})) \quad\text{for all }\tilde{\theta}\in\mathbb{T}^d,\end{equation} and
	$$\rho(\int_{\mathbb{T}}^{\oplus} {H}^V_{\alpha}(\lambda, \omega) \mathrm{d} \omega)=\rho({H}^V_{\alpha}(\lambda, \tilde{\omega})) \quad\text{ for all } \tilde{\omega}\in\mathbb{T}.$$
	Consequently, $$\sigma(H^V_{\alpha}(\lambda, \omega))=\sigma(\widehat{H}^V_{\alpha}(\lambda, \theta))$$ for all $\omega\in\mathbb{T},\theta\in\mathbb{T}^d.$
\end{proposition}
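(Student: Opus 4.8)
The plan is to deduce both resolvent identities from the standard description of the resolvent set of a decomposable operator together with the phase-independence provided by Lemma~\ref{ind}. Recall that if $\theta\mapsto A_\theta$ is a (say, norm-continuous, hence measurable) field of bounded operators, then $z\in\rho\big(\int_{\mathbb{T}^d}^{\oplus}A_\theta\,\mathrm{d}\theta\big)$ if and only if $z\in\rho(A_\theta)$ for a.e.\ $\theta$ and $\operatorname{ess\,sup}_{\theta}\|(A_\theta-z)^{-1}\|<\infty$; in that case the inverse is the decomposable operator $\int^{\oplus}(A_\theta-z)^{-1}\,\mathrm{d}\theta$. First I would apply this with $A_\theta=\widehat{H}^V_\alpha(\lambda,\theta)$. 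By Lemma~\ref{ind} the set $\rho(\widehat{H}^V_\alpha(\lambda,\theta))$ does not depend on $\theta$, so the condition ``$z\in\rho(\widehat{H}^V_\alpha(\lambda,\theta))$ for a.e.\ $\theta$'' is equivalent to ``$z\in\rho(\widehat{H}^V_\alpha(\lambda,\tilde\theta))$'' for the prescribed $\tilde\theta$. Thus the only remaining point is to show that, for $z$ in this common resolvent set, $\operatorname{ess\,sup}_{\theta}\|(\widehat{H}^V_\alpha(\lambda,\theta)-z)^{-1}\|<\infty$.

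For this I would use norm-continuity of the field. Since $\widehat{H}^V_\alpha(\lambda,\theta)-\widehat{H}^V_\alpha(\lambda,\theta')$ is the diagonal multiplication operator with entries $V(\theta+n\alpha)-V(\theta'+n\alpha)$, one has $\|\widehat{H}^V_\alpha(\lambda,\theta)-\widehat{H}^V_\alpha(\lambda,\theta')\|=\sup_{n\in\mathbb{Z}}|V(\theta+n\alpha)-V(\theta'+n\alpha)|\le\sup\{|V(x)-V(y)|:\|x-y\|_{\mathbb{T}^d}\le\|\theta-\theta'\|_{\mathbb{T}^d}\}$, which tends to $0$ as $\theta'\to\theta$ because $V$ is (Hölder, in particular uniformly) continuous on $\mathbb{T}^d$ --- this is precisely where continuity of $V$, rather than only the integrability hypothesis, enters. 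Fixing $z$ in the common resolvent set and using the factorization $\widehat{H}^V_\alpha(\lambda,\theta)-z=(\widehat{H}^V_\alpha(\lambda,\theta_0)-z)\big(I+(\widehat{H}^V_\alpha(\lambda,\theta_0)-z)^{-1}(\widehat{H}^V_\alpha(\lambda,\theta)-\widehat{H}^V_\alpha(\lambda,\theta_0))\big)$ together with a Neumann series, one sees that $\theta\mapsto(\widehat{H}^V_\alpha(\lambda,\theta)-z)^{-1}$ is norm-continuous on $\mathbb{T}^d$; since $\mathbb{T}^d$ is compact it is bounded, so the essential supremum is finite. This proves $\rho\big(\int_{\mathbb{T}^d}^{\oplus}\widehat{H}^V_\alpha(\lambda,\theta)\,\mathrm{d}\theta\big)=\rho(\widehat{H}^V_\alpha(\lambda,\tilde\theta))$. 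The identity for $H^V_\alpha$ is obtained verbatim: $H^V_\alpha(\lambda,\omega)-H^V_\alpha(\lambda,\omega')$ is diagonal multiplication by $\lambda e^{2\pi i\langle n,\alpha\rangle}(e^{2\pi i\omega}-e^{2\pi i\omega'})$ over $n\in\mathbb{Z}^d$, of norm $|\lambda|\,|e^{2\pi i\omega}-e^{2\pi i\omega'}|$, so $\omega\mapsto H^V_\alpha(\lambda,\omega)$ is norm-continuous on the compact $\mathbb{T}$, and the same decomposable-operator characterization together with Lemma~\ref{ind} gives $\rho\big(\int_{\mathbb{T}}^{\oplus}H^V_\alpha(\lambda,\omega)\,\mathrm{d}\omega\big)=\rho(H^V_\alpha(\lambda,\tilde\omega))$.

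For the final assertion, recall that the duality transform $\mathcal{U}$ is unitary and conjugates $\int_{\mathbb{T}}^{\oplus}H^V_\alpha(\lambda,\omega)\,\mathrm{d}\omega$ to $\int_{\mathbb{T}^d}^{\oplus}\widehat{H}^V_\alpha(\lambda,\theta)\,\mathrm{d}\theta$, so these two direct integrals have equal spectra. Passing to complements in the two resolvent identities just established then gives, for every $\tilde\omega\in\mathbb{T}$ and $\tilde\theta\in\mathbb{T}^d$, $\sigma(H^V_\alpha(\lambda,\tilde\omega))=\sigma\big(\int_{\mathbb{T}}^{\oplus}H^V_\alpha(\lambda,\omega)\,\mathrm{d}\omega\big)=\sigma\big(\int_{\mathbb{T}^d}^{\oplus}\widehat{H}^V_\alpha(\lambda,\theta)\,\mathrm{d}\theta\big)=\sigma(\widehat{H}^V_\alpha(\lambda,\tilde\theta))$.

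I expect the only genuinely delicate step to be the first one --- stating, or quoting precisely, the characterization of the resolvent set of a decomposable operator, and in particular verifying that $\int^{\oplus}(A_\theta-z)^{-1}\,\mathrm{d}\theta$ is the two-sided inverse of $\int^{\oplus}(A_\theta-z)\,\mathrm{d}\theta$ under the essential-boundedness condition (this uses that the commutant of the diagonal algebra consists exactly of the decomposable operators). Everything else --- norm-continuity of the two fields, the Neumann-series bound, and compactness of $\mathbb{T}$ and $\mathbb{T}^d$ --- is routine.
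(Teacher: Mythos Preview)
Your proposal is correct and follows essentially the same approach as the paper. Both proofs invoke the standard characterization of the resolvent set of a direct integral (the paper cites \cite{Barry}), then use norm-continuity of the field $\theta\mapsto\widehat{H}^V_\alpha(\lambda,\theta)$ (via continuity of $V$) together with compactness of $\mathbb{T}^d$ to bound $\sup_\theta\|(\widehat{H}^V_\alpha(\lambda,\theta)-z)^{-1}\|$, and finally combine Lemma~\ref{ind} with the unitary equivalence through $\mathcal{U}$; the only cosmetic difference is that the paper bounds the resolvent norm directly via the second resolvent identity whereas you phrase the same estimate as a Neumann series.
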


\begin{proof}
	As the argument is analogous, we only  prove  \eqref{equal}.	
	According to the general theory of  direct integrals \cite{Barry}, we have $$
	\begin{aligned}\label{direct}
		&\quad\{z|z\in\cap_{\theta}\rho(\widehat{H}^V_{\alpha}(\lambda, \theta)),\sup_{\theta}\|(z-\widehat{H}^V_{\alpha}(\lambda, \theta))^{-1}\|<\infty\}\\&\subset\rho(\int_{\mathbb{T}^d}^{\oplus} \widehat{H}^V_{\alpha}(\lambda, \theta) \mathrm{d} \theta)\subset\{z|z\in\cap_{\theta}\rho(\widehat{H}^V_{\alpha}(\lambda, \theta))\}.
	\end{aligned}
	$$
	For any $z\in\cap_{\theta}\rho(\widehat{H}^V_{\alpha}(\lambda, \theta))$
	and any $\theta \in\mathbb{T}^d $,    since $V\in C (\T^d,\C)$,    if  $\theta' \in\mathbb{T}^d$ such that $|\theta-\theta'|$ is sufficiently small, then 
	$$ |V(\theta)-V(\theta')| \|(z-\widehat{H}^V_{\alpha}(\lambda, \theta))\| \leq \frac{1}{2}.$$ It follows that
	$$
	\begin{aligned}
		&\quad \|(z-\widehat{H}^V_{\alpha}(\lambda, \theta'))^{-1}\|-\|(z-\widehat{H}^V_{\alpha}(\lambda, \theta))^{-1}\|\\	&\leq\|(z-\widehat{H}^V_{\alpha}(\lambda, \theta'))^{-1}-(z-\widehat{H}^V_{\alpha}(\lambda, \theta))^{-1}\|\\
		&=\|(z-\widehat{H}^V_{\alpha}(\lambda, \theta'))^{-1}(\widehat{H}^V_{\alpha}(\lambda, \theta')-\widehat{H}^V_{\alpha}(\lambda, \theta)(z-\widehat{H}^V_{\alpha}(\lambda, \theta))^{-1}\|\\
		&\leq \|(z-\widehat{H}^V_{\alpha}(\lambda, \theta'))^{-1}\|\|(z-\widehat{H}^V_{\alpha}(\lambda, \theta))^{-1}\|  |V(\theta)-V(\theta')|  \\
		&\leq\frac{1}{2}\|(z-\widehat{H}^V_{\alpha}(\lambda, \theta'))^{-1}\|.
	\end{aligned}
	$$
	Hence we have $$\|(z-\widehat{H}^V_{\alpha}(\lambda, \theta'))^{-1}\|\leq2{\|(z-\widehat{H}^V_{\alpha}(\lambda, \theta))^{-1}\|}.$$
	Since $\mathbb{T}^d$ is compact, by the standard compactness argument, we have $$\sup_{\theta}\|(z-\widehat{H}^V_{\alpha}(\lambda, \theta))^{-1}\|<\infty.$$
	It follows that $$\rho(\int_{\mathbb{T}^d}^{\oplus} \widehat{H}^V_{\alpha}(\lambda, \theta) \mathrm{d} \theta)=
	\{z|z\in\cap_{\theta}\rho(\widehat{H}^V_{\alpha}(\lambda, \theta))\}.$$
	According to Lemma \ref{ind}, we have $$\rho(\int_{\mathbb{T}^d}^{\oplus} \widehat{H}^V_{\alpha}(\lambda, \theta) \mathrm{d} \theta)=\rho(\widehat{H}_{\alpha}(\lambda, \tilde{\theta})) \quad\text{ for all }\tilde{\theta}\in\mathbb{T}^d,$$ the result follows.
\end{proof}

\section{Proof of  Theorem \ref{thm}}
In what follows, we will always assume $V(\cdot)$ is 
H\"{o}lder continuous, and $\log|z-V(\cdot)|\in L^1(\mathbb{T}^d)$ for all $z\in\mathbb{C}$.
By Corollary \ref{rot} and Proposition \ref{cor}, the proof of Theorem \ref{thm} reduces to the following two propositions.  

\begin{proposition}\label{res}
	For any  rationally independent $\alpha \in \mathbb{T}^d$, and any $\lambda>0$, 
	$$\sigma(\widehat{H}^V_{\alpha}(\lambda, \theta))\subset S_{\lambda}  \quad \text{for any} \quad  \theta\in\mathbb{T}^d.$$
\end{proposition}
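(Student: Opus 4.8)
The plan is to show $\C\setminus S_\lambda\subseteq\rho(\widehat H^V_\alpha(\lambda,\theta))$ (equivalently $\sigma(\widehat H^V_\alpha(\lambda,\theta))\subseteq S_\lambda$) for every $\theta\in\T^d$, by exhibiting for each $z\notin S_\lambda$ a bounded inverse of $z-\widehat H^V_\alpha(\lambda,\theta)$. Write this operator on $\ell^2(\Z)$ as $z-\widehat H^V_\alpha(\lambda,\theta)=A_\theta-\lambda S$, where $A_\theta$ is the diagonal multiplication operator $(A_\theta u)(n)=(z-V(\theta+n\alpha))u(n)$ and $S$ is the unitary shift $(Su)(n)=u(n-1)$. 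Since $z\notin S_\lambda=P_\lambda\cup C_\lambda$ and $\lambda>0$, we have $G_d(z)\neq\log|\lambda|$, so exactly one of the following holds: (i) $G_d(z)>\log|\lambda|$, in which case $z\notin C_\lambda$ forces $z\notin R(V)$; or (ii) $G_d(z)<\log|\lambda|$, with no restriction on $z$. These two cases exhaust $\C\setminus S_\lambda$, and I would treat them by factoring out, respectively, the diagonal part and the shift.

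In case (i), $z\notin R(V)=\overline{\{V(\theta+n\alpha):n\in\Z\}}$ (the equality by minimality of translation by the rationally independent $\alpha$), so $A_\theta$ is boundedly invertible and $z-\widehat H^V_\alpha(\lambda,\theta)=A_\theta(\id-B)$ with $B:=\lambda A_\theta^{-1}S$. Here $B$ is a bilateral weighted shift, $(Bu)(n)=\lambda(z-V(\theta+n\alpha))^{-1}u(n-1)$, so that $\|B^N\|=|\lambda|^N\sup_n\prod_{j=0}^{N-1}|z-V(\theta+(n-j)\alpha)|^{-1}$. Since $z\notin R(V)$, the function $\log|z-V(\cdot)|$ is continuous on $\T^d$, and unique ergodicity of the translation (which is equivalent to rational independence of $\alpha$) gives $\tfrac1N\log\|B^N\|\to\log|\lambda|-G_d(z)<0$; hence the spectral radius $r(B)=e^{\log|\lambda|-G_d(z)}<1$, $\id-B$ is invertible by the Neumann series, and $z\in\rho(\widehat H^V_\alpha(\lambda,\theta))$.

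In case (ii) I factor out the shift instead: $z-\widehat H^V_\alpha(\lambda,\theta)=-\lambda S(\id-C)$ with $C:=\lambda^{-1}S^{-1}A_\theta$, again a weighted shift, $(Cu)(n)=\lambda^{-1}(z-V(\theta+(n+1)\alpha))u(n+1)$, so that $\|C^N\|=|\lambda|^{-N}\sup_n\prod_{j=1}^{N}|z-V(\theta+(n+j)\alpha)|$. By subadditivity of $N\mapsto\log\|C^N\|$ the limit $r(C)=\lim_N\|C^N\|^{1/N}$ exists, and the crux is the estimate $\limsup_N\sup_n\tfrac1N\sum_{j=1}^N\log|z-V(\theta+(n+j)\alpha)|\le G_d(z)$. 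When $z\notin R(V)$ this is once more unique ergodicity; when $z\in R(V)$, the function $\phi:=\log|z-V(\cdot)|$ is only upper semicontinuous and, by the standing hypothesis, lies in $L^1(\T^d)$, so I would majorize it by continuous functions $\psi_\ell\downarrow\phi$ (for instance the sup-convolutions $\psi_\ell(x)=\sup_{y}(\phi(y)-\ell\,\mathrm{dist}(x,y))$), use $\int_{\T^d}\psi_\ell\to\int_{\T^d}\phi=G_d(z)$ (monotone convergence) together with the uniform convergence $\tfrac1N\sum_{j=1}^N\psi_\ell(x+j\alpha)\to\int_{\T^d}\psi_\ell$ in $x\in\T^d$, and let $\ell\to\infty$. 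This yields $r(C)\le e^{G_d(z)-\log|\lambda|}<1$, so $\id-C$ is invertible and again $z\in\rho(\widehat H^V_\alpha(\lambda,\theta))$.

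Combining the two cases gives $\rho(\widehat H^V_\alpha(\lambda,\theta))\supseteq\C\setminus S_\lambda$ for every $\theta$, which is the assertion. The main obstacle is the uniform (in the phase and in the starting index $n$) upper bound on the Birkhoff products in case (ii) when $z\in R(V)$: one cannot invoke the uniform ergodic theorem directly because $\log|z-V(\cdot)|$ need not be continuous there, and the passage through continuous majorants is precisely where the hypothesis $\log|z-V(\cdot)|\in L^1(\T^d)$ and the unique (not merely pointwise) ergodicity of the translation enter. Everything else reduces to the elementary spectral-radius computation for weighted shifts and the Neumann series.
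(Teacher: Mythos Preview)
Your approach is correct and follows essentially the same strategy as the paper: both build the resolvent as a Neumann-type series and control the terms through ergodic estimates on the Birkhoff products $\prod_j|z-V(\theta+j\alpha)|$. The paper writes out the matrix entries of the putative inverse explicitly and then verifies surjectivity and injectivity separately, invoking Furman's theorem for the uniform upper bound in your case~(ii); your spectral-radius factorizations $A_\theta(\id-B)$ and $-\lambda S(\id-C)$ are a cleaner repackaging of the same computation and yield bounded invertibility (hence both injectivity and surjectivity) for every $\theta$ at once. Your continuous-majorant argument for the subcase $z\in R(V)$ is effectively a self-contained proof of the relevant instance of Furman's theorem. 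One small dividend of your formulation: in the paper, injectivity for $z\in R(V)$ is first established only for almost every $\theta$ via Birkhoff and then transferred to all $\theta$ through the phase-independence of the spectrum; your argument avoids this detour entirely.
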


Proposition \ref{res} was essentially proved  in \cite{Boca,Sarnak}, and we  give the proof in Appendix A just for completeness.

\begin{proposition}\label{res2}
	For any  rationally independent $\alpha \in \mathbb{T}^d$, and any $\lambda>0$, 
	$$S_{\lambda}\subset \sigma(\widehat{H}^V_{\alpha}(\lambda, \theta))\quad \text{for any}  \quad  \theta\in\mathbb{T}^d.$$
\end{proposition}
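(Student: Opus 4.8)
The plan is to prove Proposition~\ref{res2} in two stages: first establish $S_\lambda\subseteq\sigma(\widehat H^V_\alpha(\lambda,\theta))$ when $\alpha$ is Liouville, by a Gordon-type construction of approximate eigenfunctions (this is Proposition~\ref{gordon}); then upgrade to all rationally independent $\alpha$ by a weak semicontinuity of the spectrum in the frequency (Proposition~\ref{jian}).

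\emph{Reduction to Liouville frequencies.} By Lemma~\ref{ind}, $\sigma(\widehat H^V_\alpha(\lambda,\theta))$ does not depend on $\theta$, and $S_\lambda$ depends only on $V$, not on $\alpha$. Hence it suffices to prove the inclusion for $\alpha$ ranging over a dense subset of the rationally independent frequencies, provided one has the following: if $\alpha_k\to\alpha$ are rationally independent and $z\in\bigcap_k\sigma(\widehat H^V_{\alpha_k}(\lambda,\theta))$, then $z\in\sigma(\widehat H^V_\alpha(\lambda,\theta))$. Indeed, take the dense subset to consist of rationally independent $\alpha$ with $\beta(\alpha)=\infty$ --- dense by the density statement recorded after the continued fraction expansion --- and apply the semicontinuity to an arbitrary $z\in S_\lambda$. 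To prove the semicontinuity I would use Weyl's criterion in its non-self-adjoint form: $z\in\sigma(T)$ exactly when $z$ lies in the approximate point spectrum of $T$ or $\overline z$ lies in that of $T^{*}$. In either case one extracts unit vectors $u_k$ of \emph{finite} support with $\|(\widehat H^V_{\alpha_k}(\lambda,\theta)-z)u_k\|\to0$ (respectively for the adjoint, which has the same band structure), and since $V$ is continuous and $\alpha_k\to\alpha$, the operators $\widehat H^V_{\alpha_k}(\lambda,\theta)$ and $\widehat H^V_\alpha(\lambda,\theta)$ differ by $o(1)$ in operator norm on the support of $u_k$; hence $\|(\widehat H^V_\alpha(\lambda,\theta)-z)u_k\|\to0$ and $z\in\sigma(\widehat H^V_\alpha(\lambda,\theta))$. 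The point of passing through Weyl's criterion rather than the resolvent is precisely to circumvent the one-sided bound \eqref{inq}, which is a strict inequality in general.

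\emph{The Gordon step.} Fix $\alpha$ with $\beta(\alpha)=\infty$ and $z\in S_\lambda$. The eigenvalue equation for \eqref{dual}, namely $\lambda u(n-1)=(z-V(\theta+n\alpha))u(n)$, is a scalar first-order recursion, so its ``transfer cocycle'' is the scalar $\lambda/(z-V(\theta+n\alpha))$ and the growth of solutions is governed by the Birkhoff sums $\frac1m\sum_{j=1}^m\log|z-V(\theta+j\alpha)|$, which one wants to compare to $G_d(z)$. This is where the Liouville hypothesis enters: along a window whose length is a continued-fraction denominator $q_k$ (or a modest power of $q_{k+1}$ still $\ll q_{k+1}$), the three-distance structure of $\{j\alpha\}_{j\le q_k}$ forces that average to be $G_d(z)+o(1)$ \emph{uniformly in the phase}, and the bound $\|q_k\alpha\|_{\mathbb T}\le q_{k+1}^{-1}$ together with H\"older continuity of $V$ makes $n\mapsto V(\theta+n\alpha)$ essentially $q_k$-periodic on such a window. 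When $z\in C_\lambda$, so $z\in R(V)$ and $G_d(z)>\log|\lambda|$, one picks $n_0$ in the window with $V(\theta+n_0\alpha)$ within a prescribed $\varepsilon$ of $z$ (possible since the $\alpha$-orbit of $\theta$ is dense in $\mathbb T^d$ and $z\in R(V)$), runs the forward solution out of $n_0$ --- which now decays at exponential rate $\approx G_d(z)-\log|\lambda|>0$ --- and truncates it; the resulting unit vector has $\|(\widehat H^V_\alpha(\lambda,\theta)-z)u\|$ bounded by $\varepsilon$ plus an exponentially small boundary term. When $z\in P_\lambda$, so $G_d(z)=\log|\lambda|$, the scalar cocycle has ``zero Lyapunov exponent'' and one instead uses a Bloch/Floquet state of the $q_k$-periodic approximant: after moving $z$ by $o(1)$ to a $z_k$ for which $\big|\prod_{j=1}^{q_k}(z_k-V(\theta+j\alpha))\big|=|\lambda|^{q_k}$ exactly, one gets a bounded-below extended solution that, truncated to $M_k$ periods with $M_kq_k\to\infty$ and $M_kq_k/q_{k+1}\to0$, yields a Weyl sequence for $\widehat H^V_\alpha(\lambda,\theta)$ at $z$. (For the part of $P_\lambda$ lying outside $R(V)$ there is a shorter, $\alpha$-independent alternative to Gordon: there $\widehat H^V_\alpha(\lambda,\theta)-z=(z-D_V)(I-\lambda(z-D_V)^{-1}S)$ with $D_V$ multiplication by $V(\theta+n\alpha)$ and $S$ the shift, and $\lambda(z-D_V)^{-1}S$ is a bilateral weighted shift whose weights have, by unique ergodicity, coinciding upper and lower geometric means $e^{-G_d(z)}$, so its spectrum is the circle $\{|w|=|\lambda|e^{-G_d(z)}\}$, which contains $1$ exactly when $z\in P_\lambda$.)

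\emph{Main obstacle.} The delicate point is entirely in the Gordon step, and it is to make the construction uniform in $\theta$ while assuming only H\"older --- not analytic --- regularity of $V$: one must choose the window length so as to simultaneously defeat the Liouville rate $q_{k+1}$ in three competing error estimates --- the discrepancy between the Birkhoff average and $\int\log|z-V|$, the loss incurred when the orbit of $\theta$ makes an abnormally close visit to the level set $V^{-1}(z)$ (where $\log|z-V|$ plunges, controlled only through the H\"older exponent), and the error in treating the potential as $q_k$-periodic --- and then weigh all of these against the boundary contributions of the truncated solution. Once this is done, the reduction above lifts the inclusion to every rationally independent $\alpha$, completing Proposition~\ref{res2}; combined with Proposition~\ref{res} this yields Theorem~\ref{first order} and hence Theorem~\ref{thm}.
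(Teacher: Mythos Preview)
Your overall architecture --- Liouville first, then weak semicontinuity --- matches the paper, and your sketch of the semicontinuity (Weyl's criterion plus a finite-support truncation, avoiding \eqref{inq}) is essentially the content of Proposition~\ref{jian} and Lemma~\ref{app}. But the allocation of work between $P_\lambda$ and $C_\lambda$ differs from the paper in two significant ways.

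\textbf{$P_\lambda$ is handled directly, not through Liouville.} The paper's Lemma~\ref{0} treats $P_\lambda$ for \emph{every} rationally independent $\alpha$ in one stroke: at a generic phase the formal solution $\psi_n$ of $(\widehat H^V_\alpha(\lambda,\theta)-z)\psi=0$ has $\tfrac{1}{|n|}\log|\psi_n|\to 0$ (since $G_d(z)=\log\lambda$), and a short growth-versus-truncation argument turns any subexponential solution into a Weyl sequence. No periodic approximant, no perturbation of $z$ to $z_k$, no Liouville hypothesis is needed. Your Floquet scheme for $P_\lambda$ --- perturb $z$ to $z_k$ so that $|\prod_{j=1}^{q_k}(z_k-V(\theta+j\alpha))|=|\lambda|^{q_k}$ --- is substantially harder to justify (you would need quantitative control on how far $z_k$ moves, i.e.\ an inverse-function estimate for a degree-$q_k$ polynomial with error $e^{o(q_k)}$), and your weighted-shift alternative only covers $P_\lambda\setminus R(V)$, leaving $P_\lambda\cap R(V)$ unaddressed.

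\textbf{For $C_\lambda$ the Gordon step is different in kind.} The paper does \emph{not} build a Weyl sequence for $\widehat H^V_\alpha(\lambda,\theta)$ directly. Instead (Proposition~\ref{gordon}, Lemma~\ref{equ-p}) it shows that the \emph{rational} operator $\widehat H^V_{p_n/q_n}(\lambda,\theta^{(n)})$ has $z$ in its spectrum: since $z\in R(V)$ the product $\prod_{j=0}^{q_n-1}(z-V(\theta+jp_n/q_n))$ varies continuously from $0$ to something exceeding $\lambda^{q_n}$ (the latter using the telescoping estimate \eqref{appro} and $t\beta(\alpha)>G_1(z)-\log\lambda$), so it hits $\lambda^{q_n}$ at some $\theta^{(n)}$; Lemma~\ref{periodic} then puts $z$ in the periodic spectrum, and Proposition~\ref{jian} carries this back to $\alpha$. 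Your route --- start near a point where $V(\theta+n_0\alpha)\approx z$ and run the exponentially decaying forward solution --- is a genuinely different idea. It is in fact \emph{simpler} and, if you think it through, does not use the Liouville hypothesis at all: by Lemma~\ref{ind} it suffices to work at one phase where Birkhoff convergence of $\tfrac{1}{n}\sum\log|z-V(\theta+j\alpha)|$ holds, and then density of the orbit plus $G_d(z)>\log\lambda$ furnish the Weyl sequence for every rationally independent $\alpha$ directly. So for $C_\lambda$ your approach would let you skip the periodic approximation and the weak-continuity step altogether; you are making the argument harder than necessary by routing it through Liouville frequencies.
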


We are left to give the proof of Proposition \ref{res2}. Recall that $S_{\lambda}=P_{\lambda} \cup C_{\lambda},$ we will prove $P_{\lambda} \subset \sigma(\widehat{H}^V_{\alpha}(\lambda, \theta))$ and $C_{\lambda}\subset \sigma(\widehat{H}^V_{\alpha}(\lambda, \theta))$ separately. 

{\subsection{Proof of $P_{\lambda}\subset \sigma(\widehat{H}^V_{\alpha}(\lambda, \theta))$:}
Recall  $P_{\lambda}=\{z\in\mathbb{C}|G_d(z)=\log\lambda\}.$}

\begin{lemma}\label{0}
	For any  rationally independent $\alpha \in \mathbb{T}^d$,  and any $\lambda>0$, $P_{\lambda}\subset\sigma(\widehat{H}^V_{\alpha}(\lambda, \theta))$  for any $\theta\in \mathbb{T}^d$.
\end{lemma}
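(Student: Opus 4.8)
The plan is to exhibit, for each $z \in P_\lambda$, an explicit Weyl sequence for $\widehat H^V_\alpha(\lambda,\theta)$ at $z$ built from a formal eigenfunction. The formal difference equation $\lambda \psi(n-1) + V(\theta+n\alpha)\psi(n) = z\psi(n)$ can be solved by a telescoping product: setting $\psi(0)=1$, one gets $\psi(n) = \prod_{j=1}^{n}\frac{\lambda}{z - V(\theta+j\alpha)}$ for $n \ge 1$ and the reciprocal-type product for $n<0$. The logarithm of $|\psi(n)|$ is a Birkhoff sum of $\log\lambda - \log|z - V(\theta+j\alpha)|$, whose ergodic average is $\log\lambda - G_d(z) = 0$ precisely because $z \in P_\lambda$. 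So $\psi$ has subexponential growth/decay on average in both directions, but is not in $\ell^2$; the standard remedy is to truncate and renormalize.

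Concretely, I would first invoke unique ergodicity of the translation by $\alpha$ on $\mathbb T^d$ together with the hypothesis $\log|z - V(\cdot)| \in L^1(\mathbb T^d)$ — but since unique ergodicity requires a continuous observable and $\log|z-V(\cdot)|$ may have singularities, the cleaner route is to use only the $L^1$ bound and a Borel–Cantelli / maximal-function argument, or to pass through the subadditive structure: along a suitable subsequence $N_k \to \infty$ one has $\frac{1}{N_k}\sum_{j=1}^{N_k}\big(\log\lambda - \log|z-V(\theta+j\alpha)|\big) \to 0$ for a.e. $\theta$, hence (by phase-independence of the spectrum, Lemma \ref{ind}, and the fact that $P_\lambda\subset\sigma$ is a closed condition) for all $\theta$. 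Then define $\phi_k = \chi_{[-N_k,N_k]}\,\psi/\|\chi_{[-N_k,N_k]}\psi\|_{\ell^2}$. Applying $\widehat H^V_\alpha(\lambda,\theta) - z$ to $\phi_k$ kills all interior terms by construction, leaving only boundary contributions at $n \approx \pm N_k$; one estimates $\|(\widehat H^V_\alpha(\lambda,\theta)-z)\phi_k\| \lesssim (|\psi(N_k)| + |\psi(-N_k)| + |\psi(N_k+1)|+\cdots)/\|\chi_{[-N_k,N_k]}\psi\|_{\ell^2}$, and the subexponential behaviour of $|\psi|$ forces this ratio to $0$ along the chosen subsequence. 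This gives $z \in \sigma(\widehat H^V_\alpha(\lambda,\theta))$ by Weyl's criterion for non-self-adjoint operators (approximate eigenvalues always lie in the spectrum, even without self-adjointness).

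The main obstacle is controlling the Birkhoff sums of the unbounded observable $\log|z - V(\theta+j\alpha)|$: near zeros of $z - V$ the summands are very negative, and one must rule out that a handful of near-resonant returns to the singularity destroy the subexponential estimate for the \emph{specific} $\theta$ in question. I would handle this by proving the estimate first for a full-measure set of $\theta$ (where Birkhoff's theorem and a quantitative control on visits to shrinking neighbourhoods of the zero set apply), and then upgrading to all $\theta$ via Lemma \ref{ind}, which says the spectrum does not depend on $\theta$ at all — so it suffices to have one good $\theta$, indeed a.e. $\theta$. A secondary technical point is the two-sided nature of $\psi$: the product defining $\psi(n)$ for $n<0$ involves $\frac{z-V(\theta+j\alpha)}{\lambda}$, whose log-average is $G_d(z)-\log\lambda=0$ as well, so both tails are simultaneously subexponential and the symmetric truncation is legitimate; I would just remark on this rather than redo the computation.
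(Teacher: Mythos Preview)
Your approach is the same as the paper's: choose a phase $\tilde\theta$ for which Birkhoff's theorem applies to $\log|z-V(\cdot)|\in L^1$ in both directions, so that the formal eigenfunction $\psi$ satisfies $\lim_{|n|\to\infty}\frac{1}{|n|}\log|\psi_n|=0$; truncate to $[-N,N]$, normalize, apply Weyl's criterion, and finally upgrade to all $\theta$ via Lemma~\ref{ind}. (Incidentally, Birkhoff already gives full-sequence convergence for a.e.\ $\theta$, so passing to a subsequence $N_k$ is unnecessary; the paper works with the full limit.)

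The one genuine gap is the sentence ``the subexponential behaviour of $|\psi|$ forces this ratio to $0$ along the chosen subsequence.'' This is precisely the nontrivial step and does not follow by inspection: knowing $|\psi_n|=e^{o(|n|)}$ gives no direct control on $(|\psi_N|^2+|\psi_{-N-1}|^2)/\|\chi_{[-N,N]}\psi\|_{\ell^2}^2$, since numerator and denominator can grow at comparable subexponential rates. The paper fills this by contradiction: if the normalized boundary term stayed above some $\delta>0$ for all large $n$, then from
\[
\|\psi^{n+1}\|^2-\|\psi^{n-1}\|^2\;\ge\;|\psi_n|^2+|\psi_{-n-1}|^2\;>\;\frac{\delta}{\lambda^2}\,\|\psi^n\|^2\;\ge\;\frac{\delta}{\lambda^2}\,\|\psi^{n-1}\|^2
\]
one iterates to $\|\psi^{2n+n_0}\|^2\ge(1+\delta/\lambda^2)^n\|\psi^{n_0}\|^2$, i.e.\ exponential growth of $\|\psi^N\|$; but $\|\psi^N\|^2\le(2N+1)\max_{|k|\le N}|\psi_k|^2$ is subexponential in $N$, a contradiction. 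This argument is what produces the Weyl subsequence; it should be written out rather than asserted.
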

\begin{proof}
	Since $G_d(z)=\log\lambda$, we can choose $\tilde{\theta}$ such that
	\begin{equation}\label{zerol}
		\lim _{n \rightarrow \infty} \frac{1}{n}  \sum_{s=0}^{n-1} \log |z-V(\tilde{\theta}\pm s\alpha)|-\log\lambda 
		=G_d(z)-\log\lambda=0.
	\end{equation}
	Consider the  solution $\psi=(\dots,\psi_{-1},\psi_0,\psi_1,\dots)$ of $\widehat{H}^V_{\alpha}(\lambda, \tilde{\theta})\psi=z\psi$ with the initial value condition $\psi_0=1.$ 
	That is, for $n>0$
	$$\psi_n=\frac{\lambda^n}{ \prod_{j=1}^{n} (z-V({\theta}+j\alpha))},$$
	while 		$$\psi_{-n}=\frac{ \prod_{j=0}^{n-1}(z-V({\theta}-j\alpha))}{\lambda^n},$$
	consequently by \eqref{zerol}, we have
	\begin{equation}\label{grow}
		\lim\limits_{|n|\rightarrow\infty}\frac{\log|\psi_n|}{n}=0.
	\end{equation}  
	Define $$\psi^{n}=\chi_{[-n,n]}\psi, \quad\widetilde{\psi}^{n}=\frac{\psi^n}{\|\psi^n\|}.$$
	Then, we have $$(\widehat{H}^V_{\alpha}(\lambda, \theta)-z)\psi^n=-\lambda\psi_{-n-1}\delta_{-n}+\lambda\psi_{n}\delta_{n+1},$$
	where $\{\delta_n\}_{n\in\mathbb{Z}}$ is the usual orthonormal basis of $\ell^2(\mathbb{Z})$. 
	Hence
	$$\|(\widehat{H}^V_{\alpha}(\lambda, \theta)-z)\widetilde{\psi}^{n}\|^2=\frac{|\lambda\psi_{-n-1}|^2+|\lambda\psi_{n}|^2}{\|\psi^n\|^2}.$$
	
	Suppose that there is $\delta>0$ and $n_0>0$ such that for any $n\geq n_0$, we have $$||(\widehat{H}^V_{\alpha}(\lambda, \theta)-z)\widetilde{\psi}^{n}\|>\delta.$$
	This implies that
	$$\|\psi^{n+1}\|^2-\|\psi^{n-1}\|^2>\|\psi^n\|^2\frac{\delta}{\lambda^2}>\|\psi^{n-1}\|^2\frac{\delta}{\lambda^2},$$ that is
	$$\|\psi^{n+1}\|^2>\|\psi^{n-1}\|^2(1+\frac{\delta}{\lambda^2}).$$
	Consequently, we have
	\begin{equation}\label{grow-1}		
		\|\psi^{2n+n_0}\|^2\geq\|\psi^{n_0}\|^2(1+\frac{\delta}{\lambda^2})^n.
	\end{equation}

	On the other hand, noting
	$$\|\psi^{2n+n_0}\|^2\leq (4n+2n_0+1)(\max_{|k|\leq 2n+n_0}|\psi_k|)^2,$$ 
	combining with \eqref{grow-1}, this implies 
	$$\log((4n+2n_0+1)(\max_{|k|\leq 2n+n_0}|\psi_k|))^2\geq 2\log\|\psi^{n_0}\|+n\log(1+\frac{\delta}{\lambda^2}).$$	
It further implies
	$$\limsup_{n\rightarrow\infty} \frac{\log((4n+2n_0+1)(\max_{|k|\leq 2n+n_0}|\psi_k|))^2}{2n+n_0}\geq \frac{1}{2}\log(1+\frac{\delta}{\lambda^2}),$$ that is to say
	$$\limsup_{n\rightarrow\infty} \frac{\log(\max_{|k|\leq 2n+n_0}|\psi_k|)}{2n+n_0}\geq \frac{1}{4}\log(1+\frac{\delta}{\lambda^2}).$$
	This is impossible.
	In fact, suppose there exists $\{n_m\}$, s.t. $$\lim_{m\rightarrow\infty} \frac{\log(\max_{|k|\leq 2n_m+n_0}|\psi_k|)}{2n_m+n_0}> 0.$$
	Choose $|\psi_{n_m^0}|=\max_{|k|\leq 2n_m+n_0}|\psi_k|,$ then
	$$\lim_{m\rightarrow\infty}\frac{\log|\psi_{n_m^0}|}{2|n_m^0|}=\lim_{m\rightarrow\infty}\frac{\log|\psi_{n_m^0}|}{2|n_m^0|+n_0}\geq\lim_{m\rightarrow\infty} \frac{\log(\max_{|k|\leq 2n_m+n_0}|\psi_k|)}{2n_m+n_0}> 0,$$
	which  contradicts with \eqref{grow}. Once we have this, then result follows immediately from Weyl's criterion.\end{proof}

\begin{remark}
	Here, $L(z):=G_d(z)-\log\lambda$ can be seen as Lyapunov exponent of the  singular cocycle $(\alpha,
	\frac{ z-V(\theta)}{\lambda}).$ One can compare Lemma \ref{0} with the quasi-periodic Schr\"odinger operator
	\begin{equation}\label{schro}
		(H_{V,\alpha,\theta} u)(n)= u(n+1)+u(n-1) +V (n\alpha + \theta) u(n)=z u(n),
	\end{equation}
	and  also denote its Lyapunov exponent by $L(z)$, it just means   if $L(z)=0$,  then $z$ belongs to the spectrum.

\end{remark}

\subsection{ Proof of  $C_{\lambda}\subset \sigma(\widehat{H}^V_{\alpha}(\lambda, \theta))$:}
It remains to show that $$C_{\lambda}=\{z \in \mathbb{C}: G_d(z)>\log\lambda\} \cap R(V)\subset\sigma(\widehat{H}^V_{\alpha}(\lambda, \theta)).$$ We first recall a fundamental fact in functional analysis.

\begin{lemma}\cite{A}\label{point}
	A point $z$ is an approximate eigenvalue of a bounded operator  $A$ if there is a sequence $\phi_n$ such that $\|\phi_n\|=1$ and $\|(A-z)\phi_n\|\rightarrow 0.$		
	If $z\in\sigma(A)$, either $z$ is an approximate eigenvalue of $A$ or $\bar{z}$ is an  eigenvalue of $A^*$, where $A^*$ is the adjoint operator of $A$.
\end{lemma}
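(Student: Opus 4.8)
The first assertion is merely the definition of an approximate eigenvalue, so the substance of the lemma is the dichotomy in the second sentence. The plan is to prove it by contraposition: assuming $z\in\sigma(A)$ but that $z$ is \emph{not} an approximate eigenvalue of $A$, I will show that $\bar z$ is a genuine eigenvalue of $A^{*}$. This is the classical decomposition $\sigma(A)=\sigma_{\mathrm{ap}}(A)\cup\overline{\sigma_{p}(A^{*})}$, but it is worth spelling out carefully here, since self-adjointness — hence the spectral theorem — is unavailable.

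First I would record the precise negation of ``approximate eigenvalue'': if no unit sequence $\phi_{n}$ satisfies $(A-z)\phi_{n}\to 0$, then by scaling there is a constant $c>0$ with $\|(A-z)\phi\|\ge c\|\phi\|$ for all $\phi$ in the underlying Hilbert space $\mathcal H$. In particular $A-z$ is injective. The next step is the standard fact that a bounded-below operator has closed range: if $(A-z)\phi_{n}\to\eta$, the inequality $\|\phi_{n}-\phi_{m}\|\le c^{-1}\|(A-z)(\phi_{n}-\phi_{m})\|$ forces $(\phi_{n})$ to be Cauchy, so $\phi_{n}\to\phi$ and $\eta=(A-z)\phi\in\mathrm{Ran}(A-z)$.

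Now I would combine this with the hypothesis $z\in\sigma(A)$. Since $A-z$ is injective with closed range, it cannot be surjective — otherwise it would be a bijection, hence boundedly invertible (directly from the lower bound, or by the open mapping theorem), contradicting $z\in\sigma(A)$. Thus $\mathrm{Ran}(A-z)$ is a proper closed subspace of $\mathcal H$, and one may choose $\psi\ne 0$ with $\psi\perp\mathrm{Ran}(A-z)$. Then $0=\langle (A-z)\phi,\psi\rangle=\langle\phi,(A^{*}-\bar z)\psi\rangle$ for every $\phi$, so $(A^{*}-\bar z)\psi=0$; since $\psi\ne 0$ this says exactly that $\bar z$ is an eigenvalue of $A^{*}$, which closes the dichotomy.

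The only point requiring any care is the closed-range step together with the clean extraction of the lower bound $\|(A-z)\phi\|\ge c\|\phi\|$ as the exact complement of the approximate-eigenvalue condition; after that the argument is purely the Hilbert-space adjoint identity plus the observation that injectivity, closed range and non-surjectivity leave room for a vector orthogonal to the range. No estimate specific to the operator $\widehat H^{V}_{\alpha}(\lambda,\theta)$ enters here — this is a general functional-analytic tool, to be applied afterwards when locating $C_{\lambda}$ inside the spectrum.
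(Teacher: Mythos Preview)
Your argument is correct and is precisely the standard proof of this classical dichotomy $\sigma(A)=\sigma_{\mathrm{ap}}(A)\cup\overline{\sigma_{p}(A^{*})}$. Note that the paper does not actually prove this lemma: it is quoted verbatim from the cited textbook \cite{A} and used as a black box, so there is no ``paper's own proof'' to compare against. Your write-up supplies exactly the missing justification, and the only subtle points --- extracting the uniform lower bound as the negation of the approximate-eigenvalue condition, and the closed-range step --- are handled cleanly.
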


Let $T(\alpha,\theta)=(t_{i j})_{i, j \in \mathbb{Z}}$ be a one-parameter infinite-dimensional matrix (or an operator on $\ell^2(\Z)$). Our first aim is to show the continuity principle of  its spectrum 
$\mathcal{S}(T(\alpha)):=\cup_{\theta\in\mathbb{T}^d}\sigma(T(\alpha,
\theta))$:

\begin{proposition}\label{jian}(Weak continuity of the spectrum)
	Let $V(\cdot)\in C(\mathbb{T}^d,\mathbb{C})$ be H\"{o}lder continuous and $T(\alpha,\theta)=(t_{i j})_{i, j \in \mathbb{Z}}$, where $$t_{j,j}=V(\theta_1+j\alpha_1,\dots,\theta_d+j\alpha_d), \qquad t_{j, j\pm k}=a_{\pm k},$$ $k=1,2,\dots m$ and $ t_{j, j\pm k}=0$ for all $k>m$.  If  $\alpha^{(n)}\rightarrow\alpha$ and $z\in \mathcal{S}(T(\alpha^{(n)}))$ for all $n$, then   $z\in \mathcal{S}(T(\alpha))$.
\end{proposition}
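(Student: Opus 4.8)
The plan is to use Lemma~\ref{point} (a Weyl-criterion substitute for non-self-adjoint operators) to replace membership in the spectrum by approximate eigenvectors, and then to localize these approximate eigenvectors at a length scale $L_n$ chosen small enough to be compatible with the rate $\|\alpha^{(n)}-\alpha\|_{\mathbb{T}^d}\to 0$, so that they survive the passage to a limit operator. Concretely, suppose $z\in\mathcal{S}(T(\alpha^{(n)}))$ for all $n$ and pick $\theta^{(n)}$ with $z\in\sigma(T(\alpha^{(n)},\theta^{(n)}))$. By Lemma~\ref{point}, after passing to a subsequence, either $z$ is an approximate eigenvalue of $A_n:=T(\alpha^{(n)},\theta^{(n)})$ for all $n$, or $\bar z$ is an eigenvalue of $A_n^{*}$ for all $n$. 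Since $A_n^{*}$ is again an operator of the type in the proposition (diagonal $\overline{V(\theta^{(n)}_1+j\alpha^{(n)}_1,\dots)}$, with $\overline{V(\cdot)}$ still H\"older continuous, and fixed off-diagonal coefficients $\overline{a_{\mp k}}$) and $\mathcal{S}(T(\alpha)^{*})=\overline{\mathcal{S}(T(\alpha))}$, it suffices to handle the first case: we may assume we are given unit vectors $\phi^{(n)}\in\ell^2(\mathbb{Z})$ with $\|(A_n-z)\phi^{(n)}\|\le 1/n$, and $\|A_n\|\le K:=\|V\|_\infty+\sum_{k=1}^m(|a_k|+|a_{-k}|)$ uniformly in $n$.

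\emph{Localization step.} For a scale $L\in\mathbb{N}$ fix a partition of unity $\{g_i\}_{i\in\mathbb{Z}}$ on $\mathbb{Z}$ with $\sum_i g_i(x)^2=1$ for all $x$, each $g_i$ supported in an interval of length $\le 2L$, $|g_i(x)-g_i(x\pm1)|\le C/L$, and bounded overlap. Writing
\[(A_n-z)(g_i\phi^{(n)})=g_i\,(A_n-z)\phi^{(n)}+[A_n,g_i]\phi^{(n)},\]
one has $\sum_i\|g_i(A_n-z)\phi^{(n)}\|^2=\|(A_n-z)\phi^{(n)}\|^2\le 1/n^2$, while from $([A_n,g_i]\phi^{(n)})(x)=\sum_{|y-x|\le m}(A_n)_{xy}(g_i(y)-g_i(x))\phi^{(n)}(y)$ and $|g_i(y)-g_i(x)|\le Cm/L$ on the relevant range, a Cauchy--Schwarz estimate together with the bounded overlap gives $\sum_i\|[A_n,g_i]\phi^{(n)}\|^2\le C_0/L^2$, with $C_0$ depending only on $m$, $K$ and the partition. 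Since also $\sum_i\|g_i\phi^{(n)}\|^2=1$, combining these and applying the pigeonhole principle yields an index $i(n)$ with $g_{i(n)}\phi^{(n)}\neq 0$ and
\[\frac{\|(A_n-z)\,g_{i(n)}\phi^{(n)}\|^2}{\|g_{i(n)}\phi^{(n)}\|^2}\le\frac{2}{n^2}+\frac{2C_0}{L^2}.\]
Thus $\hat\phi^{(n)}:=g_{i(n)}\phi^{(n)}$ is a nonzero vector supported in an interval of length $\le 2L$ which is still an approximate eigenvector of $A_n$ with error $O(1/n+1/L)$ relative to its norm.

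\emph{Passing to the limit.} Put $\delta_n:=\|\alpha^{(n)}-\alpha\|_{\mathbb{T}^d}\to 0$ and choose $L=L_n\to\infty$ with $L_n\delta_n\to 0$ (possible since $\delta_n\to 0$; e.g.\ $L_n=\min\{n,\lceil\delta_n^{-1/2}\rceil\}$, with $L_n=n$ when $\delta_n=0$). Translating $\hat\phi^{(n)}$ by the integer nearest the center $c_n$ of its support, and using that the shift conjugates $T(\alpha,\theta)$ to $T(\alpha,\theta+c\alpha)$, we obtain unit vectors $\tilde\phi^{(n)}$ supported in $[-L_n-1,L_n+1]$ and phases $\eta^{(n)}:=\theta^{(n)}+c_n\alpha^{(n)}\in\mathbb{T}^d$ with $\|(T(\alpha^{(n)},\eta^{(n)})-z)\tilde\phi^{(n)}\|\to 0$. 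Passing to a further subsequence, $\eta^{(n)}\to\eta_0\in\mathbb{T}^d$ by compactness. The operators $T(\alpha,\eta_0)$ and $T(\alpha^{(n)},\eta^{(n)})$ have the same off-diagonal part, so their difference is the diagonal operator with entries $V(\eta_0+j\alpha)-V(\eta^{(n)}+j\alpha^{(n)})$; for $|j|\le L_n+1$ the two arguments differ in $\mathbb{T}^d$ by at most $\|\eta_0-\eta^{(n)}\|_{\mathbb{T}^d}+(L_n+1)\delta_n\to 0$, so by uniform continuity of $V$,
\[\|(T(\alpha,\eta_0)-z)\tilde\phi^{(n)}\|\le\|(T(\alpha^{(n)},\eta^{(n)})-z)\tilde\phi^{(n)}\|+\sup_{|j|\le L_n+1}\bigl|V(\eta_0+j\alpha)-V(\eta^{(n)}+j\alpha^{(n)})\bigr|\longrightarrow 0.\]
Since $(T(\alpha,\eta_0)-z)^{-1}$ would be bounded if $z\notin\sigma(T(\alpha,\eta_0))$, we conclude $z\in\sigma(T(\alpha,\eta_0))\subset\mathcal{S}(T(\alpha))$; in the adjoint alternative the same argument gives $\bar z\in\mathcal{S}(T(\alpha)^{*})=\overline{\mathcal{S}(T(\alpha))}$, i.e.\ $z\in\mathcal{S}(T(\alpha))$ as well.

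\emph{Main obstacle.} The crux is that $\alpha\mapsto T(\alpha,\theta)$ is \emph{not} norm-continuous, because the term $j\alpha$ in the potential makes $\sup_j|V(\theta+j\alpha^{(n)})-V(\theta+j\alpha)|$ stay bounded away from $0$; one therefore cannot simply invoke upper semicontinuity of the spectrum under norm convergence. This is what forces the localization step, and the delicate point there is the uniform decay of the commutators $[A_n,g_i]$ — which is exactly why one must use a partition of unity with Lipschitz constant $O(1/L)$ rather than sharp cutoffs (whose commutators do not decay) — together with the careful choice of a cutoff scale $L_n\to\infty$ slow enough to be compatible with the rate of convergence $\alpha^{(n)}\to\alpha$. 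The weaker input ``$V$ H\"older (indeed merely continuous)'' suffices precisely because only the modulus of continuity of $V$ enters the final estimate.
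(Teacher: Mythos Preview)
Your argument is correct and follows the same overall plan as the paper: reduce to approximate eigenvectors via Lemma~\ref{point}, localize them at a scale compatible with $\|\alpha^{(n)}-\alpha\|$, recenter by a phase shift, and pass to a limit using compactness of $\mathbb{T}^d$. The differences are technical. For the localization step the paper proves and invokes a separate Lemma~\ref{app} (a sharp-cutoff pigeonhole over all translates, producing a length-$N$ approximate eigenvector with error $C/\sqrt{N}+\varepsilon$), while you use a smooth partition of unity with the commutator bound $\|[A_n,g_i]\|=O(1/L)$ and then pigeonhole; both yield what is needed. For the comparison step the paper exploits the H\"older exponent $t$ explicitly to balance scales (choosing $N_n=\varepsilon_n^{-2/t}$ against $|\alpha^{(n)}-\alpha|=\varepsilon_n^{3/t}$ so that $(N_n|\alpha^{(n)}-\alpha|)^t=\varepsilon_n$), whereas you only require $L_n\to\infty$ with $L_n\delta_n\to 0$ together with the uniform continuity of $V$; as you correctly note, this means your proof actually establishes the proposition for merely continuous $V$, a mild strengthening of the stated hypothesis.
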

\begin{proof}
	
	Let $\varepsilon_n>0$ with $\varepsilon_n\rightarrow 0$. We choose $\alpha^{(n)}$ such that $|\alpha^{(n)}-\alpha|=\varepsilon_n^{3/t}$ and $N_n=\varepsilon_n^{-2/t},$ where $t$ is the H\"{o}lder exponent of $V(\cdot)$, respectively.
	We note that  $z\in \mathcal{S}(T(\alpha^{(n)})$, therefore, there exists $\theta^{(n)}\in\mathbb{T}^d$, such that $z\in\sigma(T(\alpha^{(n)},\theta^{(n)}))$, which implies that  either $z$ is an approximate eigenvalue of $T(\alpha^{(n)},\theta^{(n)})$ or $\bar{z}$ is an eigenvalue of $T(\alpha^{(n)},\theta^{(n)})^*$ by Lemma \ref{point}.

	We  require a more precise estimate of the approximation sequence:
	\begin{lemma}\label{app}
		Let $T=(t_{i j})_{i, j \in \mathbb{Z}}$ be an infinite-dimensional matrix with $|t_{j, j\pm k}| \leq M$, $k=1,2,\dots m$ and $ t_{j, j\pm k}=0$ for all $k>m$. Suppose that 0 is an approximate eigenvalue of $T$. Let $N$ be a positive integer and let $\varepsilon>0$. Then there exist an integer $k$, constant $C(m,M)$ and a non-zero vector $\phi\in\ell^2(\mathbb{Z})$ with
		$$
		\phi=(\ldots, 0,0, \phi_{k+1}, \phi_{k+2}, \ldots, \phi_{k+N}, 0,0, \ldots) 
		$$
		such that
		$$
		\|T \phi\| \leq(\frac{C}{\sqrt{N}}+\varepsilon)\|\phi\|.
		$$
	\end{lemma}
	\begin{remark}	The proof of Lemma \ref{app} is similar to \cite{Ch}, where they proved the case of Jacobi operators. We make slight modifications and give the proof in Appendix B just for completeness. 
	\end{remark}

	Once we have this, if $z$ is  an approximate eigenvalue of $T(\alpha^{(n)},\theta^{(n)})$, by Lemma \ref{app}, there exists an integer $k$ and a non-zero vector
	$$
	\phi_n=\left(\ldots, 0,0, \phi_{k+1}, \phi_{k+2}, \ldots, \phi_{k+N_n}, 0,0, \ldots\right) \in \ell^2(\mathbb{Z})
	$$
	such that
	$$
	\|(T(\alpha^{(n)}, \theta^{(n)})-z) \phi_n\| \leq (\frac{C}{\sqrt{N_n}}+\varepsilon_n)\|\phi_n\|.
	$$
	We choose $\tilde{\theta}^{(n)}+(k+\frac{N_n+1}{2})(\alpha-\alpha^{(n)})=\theta^{(n)}$. With this choice, we have	
	$$
	(T(\alpha, \tilde{\theta}^{(n)})-T(\alpha^{(n)}, \theta^{(n)})) \phi_n=\left(\ldots, 0,0, \gamma_{k+1} \phi_{k+1}, \ldots, \gamma_{k+N_n} \phi_{k+N_n}, 0,0, \ldots\right),
	$$
	where
	$$
	\gamma_s= V(\tilde{\theta}^{(n)}+s\alpha)-V(\theta^{(n)}+s\alpha^{(n)}), \quad k+1 \leq s \leq k+N_n.
	$$
	And one has an estimate
	$$
	\begin{aligned}
		|\gamma_s|&\leq |s(\alpha-\alpha^{(n)})+\tilde{\theta}^{(n)}-\theta^{(n)}|^t\\&=|s-k-\frac{N_n+1}{2}|^t|\alpha^{(n)}-\alpha|^t\leq (N_n-1)^t|\alpha^{(n)}-\alpha|^t=\varepsilon_n.
	\end{aligned}$$
	Therefore, we have
	
	\begin{eqnarray}
		\nonumber			\|T(\alpha, \tilde{\theta}^{(n)}-z)\phi_n\|&\leq& \|(T(\alpha^{(n)}, \theta^{(n)})-z)\phi_n\|+\|(T(\alpha, \tilde{\theta}^{(n)}-T(\alpha^{(n)}, \theta^{(n)}))\phi_n\|\\
		\nonumber		&\leq& (\frac{C}{\sqrt{N_n}}+\varepsilon_n)\|\phi_n\|+\varepsilon_n\|\phi_n\|\\
		\label{tal}		& \leq& (C+2)\varepsilon_n\|\phi_n\|.
	\end{eqnarray}

	If  $\bar{z}$ is an  eigenvalue of $T(\alpha^{(n)},\theta^{(n)})^*$, similarly, we have
	\begin{equation}\label{talc}
		\|(T(\alpha, \tilde{\theta}^{(n)})^*-\bar{z}) \phi_n\| \leq (C+2)\varepsilon_n\|\phi_n\|.
	\end{equation}
	Therefore, there exist $\varepsilon_n,\tilde{\theta}^{(n)},\phi_n$ such that for any $n$, either \eqref{tal} or \eqref{talc} holds.
	
	Since $\mathbb{T}^d$ is  compact, there  exists an accumulation point $\tilde{\theta}$, such that $|\tilde{\theta}^{(n)}-\tilde{\theta}|=c_n\rightarrow 0$ along some subsequence. If \eqref{tal} holds infinitely many times, then
	$$\begin{aligned}
		\|(T(\alpha, \tilde{\theta})-z) \phi_n\|&\leq\|(T(\alpha, \tilde{\theta}^{(n)})-z) \phi_n\|+\|(T(\alpha, \tilde{\theta}^{(n)})-T(\alpha, \tilde{\theta})) \phi_n\|\\
		& \leq(C+2)\varepsilon_n\|\phi_n\|+c_n^t |\phi_n\|.
	\end{aligned}
	$$
	This means
	$$\|(T(\alpha, \tilde{\theta})-z) \frac{\phi_n}{\|\phi_n\|}\|\rightarrow 0,\quad n\rightarrow\infty.$$
	According to  Weyl's criterion, $z\in\sigma(T(\alpha, \tilde{\theta}))\subset\mathcal{S}(T(\alpha))$.
	If \eqref{talc} holds infinitely many times, then $\bar{z}\in\sigma(T(\alpha, \tilde{\theta})^*)$, we  also have $z\in\sigma(T(\alpha, \tilde{\theta}))\subset\mathcal{S}(T(\alpha))$.
\end{proof}

With the aforementioned preparations, we can prove our conclusion using rational approximation. We recall the following lemma about the periodic operator.
\begin{lemma}\label{periodic}\cite{KK}
	Let $(Ju)(n)=au(n+1)+b(n)u(n)+cu(n-1)$ is an operator on $\ell^2(\mathbb{Z}),$ where $a,c\in\mathbb{C}$. Suppose $b(\cdot):\mathbb{Z}\rightarrow\mathbb{C}$ satisifies $b(n+N)=b(n).$ Then
	$$\sigma(J)=\cup_{\phi\in[0,2\pi]}\{z|\det(z-J(\phi))=0\},$$
	where $J(\phi)= 
	\begin{pmatrix}
		b_{0} & a &  & & ce^{i\phi}\\
		c & b_{1} & a & & \\
		& \ddots & \ddots & \ddots& \\
		& & c & b_{N-2}& a\\
		ae^{-i\phi} &  & & c & b_{N-1}\\
	\end{pmatrix}$.
\end{lemma}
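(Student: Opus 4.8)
The plan is to prove Lemma~\ref{periodic} by Floquet theory: the $N$-periodicity of $b(\cdot)$ lets one identify $J$ with a direct integral of the finite matrices $J(\phi)$ over $\phi\in[0,2\pi]$, after which the spectrum of the direct integral is read off fibrewise, exactly as in the proof of Proposition~\ref{cor}. Write each $n\in\Z$ uniquely as $n=mN+r$ with $m\in\Z$, $r\in\{0,\dots,N-1\}$, and introduce the Floquet transform $\mathcal{F}\colon\ell^2(\Z)\to L^2\big([0,2\pi),\tfrac{d\phi}{2\pi};\C^N\big)$, $(\mathcal{F}u)_r(\phi)=\sum_{m\in\Z}e^{-im\phi}u(mN+r)$, which is unitary by Parseval. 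A direct computation, using $b(mN+r)=b_r$, gives
\[
\mathcal{F}J\mathcal{F}^{-1}=\int_{[0,2\pi)}^{\oplus}\widetilde{J}(\phi)\,\tfrac{d\phi}{2\pi},\qquad (\widetilde{J}(\phi)\psi)_r=a\psi_{r+1}+b_r\psi_r+c\psi_{r-1},
\]
with the cyclic conventions $\psi_N:=e^{i\phi}\psi_0$, $\psi_{-1}:=e^{-i\phi}\psi_{N-1}$; in matrix form $\widetilde{J}(\phi)=J(-\phi)$, so $\bigcup_{\phi}\sigma(\widetilde{J}(\phi))=\bigcup_{\phi}\sigma(J(\phi))$ by $2\pi$-periodicity. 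Thus it suffices to prove $\sigma(J)=\bigcup_{\phi\in[0,2\pi]}\sigma(\widetilde{J}(\phi))=\bigcup_{\phi\in[0,2\pi]}\{z:\det(zI-\widetilde{J}(\phi))=0\}$.

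For the direct integral, the general theory recalled in the proof of Proposition~\ref{cor} gives $\rho(J)\subset\bigcap_\phi\rho(\widetilde{J}(\phi))$ together with
\[
\big\{z:\ z\in\textstyle\bigcap_\phi\rho(\widetilde{J}(\phi)),\ \sup_\phi\|(zI-\widetilde{J}(\phi))^{-1}\|<\infty\big\}\subset\rho(J).
\]
Since $\phi\mapsto\widetilde{J}(\phi)$ is continuous (a trigonometric polynomial) on the compact set $[0,2\pi]$ and every fibre is the finite-dimensional $\C^N$, for $z\notin\bigcup_\phi\sigma(\widetilde{J}(\phi))$ one has $\det(zI-\widetilde{J}(\phi))\neq0$ for all $\phi$, so by Cramer's rule the entries of $(zI-\widetilde{J}(\phi))^{-1}$ are continuous in $\phi$ and hence uniformly bounded on $[0,2\pi]$; therefore $\rho(J)=\bigcap_\phi\rho(\widetilde{J}(\phi))$, i.e. $\sigma(J)=\bigcup_\phi\sigma(\widetilde{J}(\phi))$. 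No closure is needed in the formula, since $\bigcup_\phi\sigma(\widetilde{J}(\phi))$ is the image of the compact set $\{(\phi,z)\in[0,2\pi]\times\C:\det(zI-\widetilde{J}(\phi))=0\}$ — closed as a zero set, bounded because $|z|\le\|\widetilde{J}(\phi)\|\le|a|+|c|+\max_r|b_r|$ — under the continuous projection $(\phi,z)\mapsto z$. The point I expect to require the most care is precisely the non-self-adjointness: the identity $\sigma(\int^{\oplus}A(\phi)\,d\phi)=\overline{\bigcup_\phi\sigma(A(\phi))}$ is not free in this setting, so one must run the resolvent-norm comparison of Proposition~\ref{cor}; here, however, the fibres are finite-dimensional and $\phi\mapsto\widetilde{J}(\phi)$ is continuous on a compact set, so the required uniform resolvent bound is elementary and there is no pseudospectral obstruction (unlike in the infinite-dimensional parts of the paper). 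The remaining matter — matching the corner phases $e^{\pm i\phi}$ to the right Floquet boundary condition in the computation of $\widetilde{J}(\phi)$ — is routine bookkeeping.

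Finally, if one prefers to avoid the direct-integral machinery for the inclusion $\bigcup_\phi\sigma(\widetilde{J}(\phi))\subset\sigma(J)$ — which is also closer to the style of the rest of the paper — one can argue by Weyl's criterion: given $z$ with $\widetilde{J}(\phi_0)\psi=z\psi$ for some $0\neq\psi\in\C^N$, let $u$ be the Bloch solution of $(J-z)u=0$ determined by $u|_{[0,N-1]}=\psi$ and the Floquet condition $u(\cdot+N)=e^{i\phi_0}u(\cdot)$; then $u\in\ell^\infty(\Z)\setminus\{0\}$ with $|u|$ being $N$-periodic, and the truncations $u^{(M)}:=\chi_{[0,MN-1]}u$ satisfy $\|(J-z)u^{(M)}\|\le 2(|a|+|c|)\max_r|\psi_r|$ (since $J$ has range one, only the two endpoints contribute) while $\|u^{(M)}\|^2=M\|\psi\|^2\to\infty$, so $u^{(M)}/\|u^{(M)}\|$ is a Weyl sequence and $z\in\sigma(J)$; the reverse inclusion is then supplied by the direct-integral argument above.
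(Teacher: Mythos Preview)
The paper does not prove Lemma~\ref{periodic}; it is quoted from \cite{KK} as a known fact about periodic (not necessarily self-adjoint) Jacobi operators, so there is no ``paper's own proof'' to compare against. Your Floquet-transform/direct-integral argument is the standard proof and is correct: the key point you identify --- that the fibres $\widetilde J(\phi)$ are finite-dimensional and depend continuously on $\phi$ over a compact parameter set, so Cramer's rule yields the uniform resolvent bound needed to turn the general direct-integral inclusions into an equality --- is exactly what makes the non-self-adjoint case go through without pseudospectral issues, and your compactness argument showing no closure is needed is clean.

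Two cosmetic remarks on the Weyl-sequence paragraph. First, ``$J$ has range one'' should read ``$J$ is tridiagonal'' (bandwidth one). Second, the truncation error $(J-z)u^{(M)}$ is actually supported on \emph{four} sites ($-1$, $0$, $MN-1$, $MN$), not two, since the off-diagonal terms produce one error just inside and one just outside each cut; nevertheless your bound $\|(J-z)u^{(M)}\|\le 2(|a|+|c|)\max_r|\psi_r|$ remains valid, so the conclusion is unaffected.
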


We will first state the  result in the one-frequency case, which is more instructive. 

\begin{proposition}\label{gordon}
	Let $\lambda>0$, $V(\cdot)$ is $t$-H\"{o}lder continuous. 
	Then for any  $$t \beta(\alpha)> G_1(z)-\log\lambda>0$$ 
	with $z\in R(V)$, we have	 $z\in\sigma(\widehat{H}^V_{\alpha}(\lambda, \theta))$  for  any $\theta\in \mathbb{T}^d$.
\end{proposition}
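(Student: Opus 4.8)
The plan is to establish $z \in \sigma(\widehat{H}^V_\alpha(\lambda,\theta))$ by constructing, for a Liouvillean $\alpha$ with $t\beta(\alpha) > G_1(z) - \log\lambda > 0$, an approximate eigenvector supported on a long block whose length is tied to the continued-fraction denominators $q_n$. The key point is that for $z \in R(V)$ we have $z = V(\theta_0)$ for some $\theta_0 \in \mathbb{T}$, so the diagonal entry $V(\theta_0 + j\alpha) - z$ vanishes at $j = 0$ and, by $t$-Hölder continuity, remains small (of size $\|j\alpha - 0\|_{\mathbb{T}}^t$ near the relevant returns) on a suitable window. Working with the first-order operator $\widehat{H}^V_\alpha(\lambda,\theta_0)\psi = z\psi$, the transfer relation is explicit: $\psi_n = \lambda^n / \prod_{j=1}^n (z - V(\theta_0 + j\alpha))$ for $n > 0$ and the reciprocal-type product for $n < 0$, exactly as in the proof of Lemma \ref{0}. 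The subtlety is that now $G_1(z) - \log\lambda > 0$, so these products \emph{grow} geometrically and $\psi \notin \ell^2$; the Gordon-type trick is to truncate on a block of length $\sim q_n$ and exploit the near-periodicity $V(\theta_0 + (j+q_n)\alpha) \approx V(\theta_0 + j\alpha)$ to control the boundary error.

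\textbf{Step 1.} Fix $z = V(\theta_0) \in R(V)$ with $0 < G_1(z) - \log\lambda < t\beta(\alpha)$. Pass to the dual first-order model and choose the phase $\theta = \theta_0$ (legitimate since, by Lemma \ref{ind}, the spectrum is $\theta$-independent). Along a subsequence $q_n$ realizing $\beta(\alpha)$, set block length $N_n \sim q_n$ and look at the truncated solution $\psi^{(n)} = \chi_{[1, N_n]}\psi$ where $\psi$ solves $\widehat{H}^V_\alpha(\lambda,\theta_0)\psi = z\psi$ with $\psi_0 = 1$.

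\textbf{Step 2.} Estimate $\|(\widehat{H}^V_\alpha(\lambda,\theta_0) - z)\psi^{(n)}\|$ versus $\|\psi^{(n)}\|$. The operator applied to the truncation produces only two boundary terms (at the left and right ends of the block), of sizes $|\lambda \psi_0|$ and $|\lambda \psi_{N_n}|$ respectively, plus — crucially — at the endpoint where the diagonal $V(\theta_0 + N_n\alpha) - z$ appears, one uses $\|N_n\alpha\|_{\mathbb{T}} \leq \|q_{n-1}\alpha\|_{\mathbb{T}} \lesssim 1/q_n$ to make that term Hölder-small. Meanwhile $\|\psi^{(n)}\| \geq |\psi_{N_n}| \sim \exp(N_n (G_1(z) - \log\lambda))$ by the Birkhoff-sum asymptotics $\frac{1}{N}\sum_{j=1}^N \log|z - V(\theta_0 + j\alpha)| \to G_1(z)$ (valid for a.e.\ $\theta_0$; one must either choose $\theta_0$ in the full-measure set where this and the analogous negative-side average hold, or argue the average holds at the specific $\theta_0$ with $z = V(\theta_0)$ using the $L^1$ hypothesis on $\log|z - V(\cdot)|$ — this point needs care). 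The right boundary term $|\lambda\psi_{N_n}|$ is comparable to $\|\psi^{(n)}\|$, so naively the ratio is $O(1)$, not small — this is why one instead uses a \emph{two-block} Gordon vector.

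\textbf{Step 3 (the real argument).} Replace $\psi^{(n)}$ by a Gordon-type combination on $[1, 2q_n]$ (or $[-q_n, q_n]$): because $\|q_n\alpha\|_{\mathbb{T}}$ is super-exponentially small relative to $q_n$ when $\beta(\alpha) > 0$, the potential is nearly $q_n$-periodic with error $\lesssim \|q_n\alpha\|_{\mathbb{T}}^t \leq q_{n+1}^{-t}$, and $q_{n+1}^{-t} \ll e^{-q_n(G_1(z)-\log\lambda)}$ precisely because $t\beta(\alpha) > G_1(z) - \log\lambda$. Comparing $\psi$ on $[1,q_n]$ with its shift on $[q_n+1, 2q_n]$, the mismatch is beaten by the growth exponent, so a suitable linear combination kills the dominant boundary term and leaves an error that is $o(\|\psi^{(n)}\|)$. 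Then Weyl's criterion (as invoked at the end of Lemma \ref{0}) gives $z \in \sigma(\widehat{H}^V_\alpha(\lambda,\theta_0))$. One should also invoke Lemma \ref{periodic} if instead one prefers to argue via the rational approximants $p_n/q_n$: the periodic operator's spectrum is a union of band curves, and one shows $z$ lies within Hölder-distance $q_{n+1}^{-t}$ of $\sigma(\widehat{H}^V_{p_n/q_n})$, which by the growth/Liouville comparison forces $z$ into the limit spectrum.

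\textbf{The main obstacle} I expect is Step 3's quantitative bookkeeping: one must show that the geometric growth rate $G_1(z) - \log\lambda$ of the transfer product is \emph{dominated} by $t\beta(\alpha)$ in the right direction so that the near-periodicity error (scaling like $\|q_n\alpha\|^t_{\mathbb{T}} \approx q_{n+1}^{-t}$) wins against the accumulated product size $e^{q_n(G_1(z)-\log\lambda)}$ — this requires $q_{n+1}^{-t} e^{q_n(G_1(z)-\log\lambda)} \to 0$, i.e.\ $t \frac{\ln q_{n+1}}{q_n} > G_1(z) - \log\lambda$ along the subsequence, which is exactly the hypothesis. A secondary technical nuisance is justifying the Birkhoff average $\frac1N\sum \log|z - V(\theta_0 + j\alpha)| \to G_1(z)$ at the \emph{specific} phase $\theta_0$ with $z = V(\theta_0)$ (where the integrand has a logarithmic singularity at $j=0$); here one isolates the finitely many near-singular terms, bounds them crudely by the Hölder modulus and Liouville return-time estimates, and applies unique ergodicity to the continuous truncated cocycle, as in Sarnak's and Boca's treatments.
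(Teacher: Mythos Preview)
Your primary approach---constructing a Weyl sequence for $\widehat{H}^V_\alpha(\lambda,\theta_0)$ directly at the irrational frequency, at the special phase $\theta_0$ with $V(\theta_0)=z$, via a two-block Gordon combination---is \emph{not} what the paper does, and as written it has a real gap. The classical Gordon mechanism you invoke relies on the two-dimensional solution space of second-order (Schr\"odinger) difference equations: Cayley--Hamilton for the $2\times2$ transfer matrix is what lets a linear combination of two independent solutions cancel the dominant boundary term. Here the operator is first order, the solution space is one-dimensional, and there is no second solution to combine with. Your phrase ``a suitable linear combination kills the dominant boundary term'' therefore has no obvious meaning in this setting. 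If instead you mean to splice the solution on $[0,q_n-1]$ with a shifted copy on $[q_n,2q_n-1]$, the resulting error still contains a term of size $\lambda|\psi_{q_n-1}|/\|\psi^{(n)}\|$, and driving this to zero requires exactly the Birkhoff control at the \emph{specific} phase $\theta_0$ that you flag as a ``secondary technical nuisance''---but it is not secondary, it is the whole difficulty, and your proposed fix (isolate near-singular terms) does not address it.

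The paper takes a different and cleaner route. It passes to the rational approximant $p_n/q_n$ and uses the intermediate value theorem on the phase variable: the product $\prod_{j=0}^{q_n-1}|z-V(\theta+jp_n/q_n)|$ vanishes at any $\theta$ with $V(\theta)=z$, while at a Birkhoff-generic $\bar\theta$ the $\alpha$-product is $\gtrsim e^{q_n G_1(z)}>\lambda^{q_n}$; a telescoping estimate (this is where H\"older and the Liouville hypothesis $t\beta(\alpha)>G_1(z)-\log\lambda$ enter) shows the $p_n/q_n$-product at $\bar\theta$ differs from the $\alpha$-product by at most $e^{(\log\lambda+\varepsilon)q_n}$, hence is still $>\lambda^{q_n}$. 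By continuity there exists $\theta^{(n)}$ with the $p_n/q_n$-product \emph{equal} to $\lambda^{q_n}$, which by the Floquet determinant (Lemma~\ref{periodic}) gives $z\in\sigma(\widehat{H}^V_{p_n/q_n}(\lambda,\theta^{(n)}))$ exactly. Then Proposition~\ref{jian} (weak continuity) pushes this to the irrational limit. Your last sentence in Step~3 gestures toward this route but misses the key IVT step: the paper does not show $z$ is \emph{close} to the periodic spectrum, it shows $z$ is \emph{in} it.
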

\begin{proof}
	We first need the following elementary observation:
	
	\begin{lemma}
		For any $\varepsilon>0$, there exist $K(\varepsilon)$, $N(\varepsilon)$ such that
		$$\sup_{\theta\in\mathbb{T}}\prod_{j=0}^{k-1}|z-V({\theta}+j{\alpha})|\leq e^{k(G_1(z)+\varepsilon)},$$ 
		$$\sup_{\theta\in\mathbb{T}}\prod_{j=0}^{k-1}|z-V({\theta}+j{\frac{p_n}{q_n}})|\leq e^{k(G_1(z)+\varepsilon)}$$ for any $n>N$, $k>K$. Moreover, we have
		\begin{equation}\label{appro}\prod_{j=0}^{q_n-1}|z-V({\theta}+j\alpha)|-\prod_{j=0}^{q_n-1}|z-V({\theta}+j{\frac{p_n}{q_n}})| \leq e^{(\log\lambda+\varepsilon)q_n}. \end{equation}
	\end{lemma}
	\begin{proof}
	For any $\varepsilon>0$, we can choose $\delta(\varepsilon)$, such that $$\tilde{G}_1(z)=\int_{\mathbb{T}}\log\bigg(|z-V({\theta})|+\delta\bigg)\mathrm{d}\theta<G_1(z)+\frac{\varepsilon}{4}.$$
	{By uniquely ergodic theorem, $$\lim_{k \rightarrow \infty}\frac{1}{k}\sum_{n=0}^{k-1}\log\bigg(|z-V({\theta}+n\alpha)|+\delta\bigg)=\tilde{G}_1(z)$$ converges uniformly in $\theta$. }
		Then  there exists $K(\varepsilon)>0$, such that for any $k\geq K$,  $$\sup_{\theta\in\mathbb{T}}\prod_{j=0}^{k-1}|z-V({\theta}+j{\alpha})|\leq\sup_{\theta\in\mathbb{T}}\prod_{j=0}^{k-1}\bigg(|z-V({\theta}+j{\alpha})|+\delta\bigg)\leq e^{k(\tilde{G}_1(z)+\frac{\varepsilon}{4})}\leq e^{k(G_1(z)+\frac{\varepsilon}{2})}.$$ 
		Fixed $l\geq K$, then
		$$\sup_{\theta\in\mathbb{T}}\bigg|\frac{1}{l}\sum_{j=0}^{l-1}\log\bigg(|z-V({\theta}+j{\frac{p_n}{q_n}})|+\delta\bigg)-\frac{1}{l}\sum_{j=0}^{l-1}\log\bigg(|z-V({\theta}+j{\alpha})|+\delta\bigg)\bigg|\leq \frac{\varepsilon}{2}$$
		for $q_n$ sufficiently large.
		This means for $q_n$ sufficiently large,
		\begin{equation}\label{up}
			\sup_{\theta\in\mathbb{T}}\prod_{j=0}^{l-1}|z-V({\theta}+j{\frac{p_n}{q_n}})|\leq	\sup_{\theta\in\mathbb{T}}\prod_{j=0}^{l-1}\bigg(|z-V({\theta}+j{\frac{p_n}{q_n}})|+\delta\bigg)\leq e^{l({G}(z)+\varepsilon)}.
		\end{equation}
		Thus, there exists $N(\varepsilon)$, such that \eqref{up}
		holds for $K\leq l\leq2K-1$ if $n>N$.
		Since any $k\geq K$ can be written as a sum of integers $l_i$ satisfying $K\leq l_i\leq 2K-1$, it follows that \eqref{up} is true for  all $k\geq K$.
		
		As a consequence, by the telescoping argument, we have
		$$\begin{aligned}
			&\quad\prod_{j=0}^{q_n-1}|z-V({\theta}+j\alpha)|-\prod_{j=0}^{q_n-1}|z-V({\theta}+j{\frac{p_n}{q_n}})|\\&\leq\bigg|\prod_{j=0}^{q_n-1}(z-V({\theta}+j\alpha))-\prod_{j=0}^{q_n-1}(z-V({\theta}+j\frac{p_n}{q_n}))\bigg|\\&\leq \bigg|\bigg(\sum_{l=0}^{K}+\sum_{l=K+1}^{q_n-1-K}+\sum_{l=q_n-K+1}^{q_n-1}\bigg)\prod_{j=l+1}^{q_n-1}(z-V({\theta}+j\alpha))\prod_{j=0}^{l-1}(z-V({\theta}+j\frac{p_n}{q_n})) |\alpha-\frac{p_n}{q_n}|^t\bigg|  \\
			&\leq q_n^2e^{(G_1(z)+\varepsilon) (q_n-1)}|\alpha-\frac{p_n}{q_n}|^t\\
			&\leq e^{(G_1(z)+\varepsilon) q_n}e^{-tq_n(\beta(\alpha)-\varepsilon)}\\
			&\leq e^{(\log\lambda+\varepsilon)q_n}.
		\end{aligned}
		$$
		We thus finish the proof. 
	\end{proof}

	Once we have this, we can apply periodic approximation, and obtain the following:
	
	\begin{lemma}\label{equ-p}
		If further $z\in R(V)$, then for sufficiently large $n$,  there exists ${\theta}^{(n)}$ such that 
		\begin{equation}\label{per}\prod_{j=0}^{q_n-1}|z-V({\theta}^{(n)}+j{\frac{p_n}{q_n}})|=\lambda^{q_n}.\end{equation}
	\end{lemma}
	\begin{proof}
		Since $\alpha\in\mathbb{T}\verb|\|\mathbb{Q}$, we can find $\overline{\theta}\in\mathbb{T}$ such that $$\lim_{k \rightarrow \infty}\frac{1}{k}\sum_{n=0}^{k-1}\log|z-V(\overline{\theta}+n\alpha)|-\log\lambda=G_1(z)-\log\lambda>0.$$
		Then \eqref{appro} implies 
		\begin{equation*}
			\begin{aligned}
				\prod_{j=0}^{q_n-1}|z-V(\overline{\theta}+j{\frac{p_n}{q_n}})|&\geq\quad\prod_{j=0}^{q_n-1}|z-V(\overline{\theta}+j\alpha)|-e^{(\log\lambda+\varepsilon)q_n}\\
				&\geq e^{(G_1(z)-\epsilon) q_n}-e^{(\log\lambda+\varepsilon)q_n} >  \lambda^{q_n}.
			\end{aligned}
		\end{equation*}
		
		Now under the assumption  $z\in R(V)$, we can  find $\underline{\theta}$ such that
		$$\quad\prod_{j=0}^{q_n-1}|z-V(\underline{\theta}+j{\frac{p_n}{q_n}})|=0.$$
		By the continuity, the above inequality implies 
		there exists ${\theta}^{(n)}$, such that
		$$\quad\prod_{j=0}^{q_n-1}|z-V({\theta}^{(n)}+j{\frac{p_n}{q_n}})|=\lambda^{q_n}.$$
	\end{proof}

	Note \eqref{per} means there exists $\phi_n\in[0,2\pi]$, such that
	\begin{equation}\label{deter1}
		\prod_{j=0}^{q_n-1}(z-V({\theta}^{(n)}+j{\frac{p_n}{q_n}}))=(-1)^{q_n+1}\lambda^{q_n}e^{i\phi_n}.
	\end{equation}
	Consider the operator $\widehat{H}^V_{\frac{p_n}{q_n}}(\lambda, {\theta}^{(n)})$, it is a periodic operator with period $q_n$.
	Applying Lemma \ref{periodic} with $N=q_n$, $a=0$, $b_j=V({\theta}^{(n)}+j{\frac{p_n}{q_n}})$, $c=\lambda$,  we see that $$\det(z-J(\phi_n))=\prod_{j=0}^{q_n-1}(z-V({\theta}^{(n)}+j{\frac{p_n}{q_n}}))-(-1)^{q_n+1}\lambda^{q_n}e^{i\phi_n}=0$$   by \eqref{deter1}, we can conclude that
	$z\in\sigma(\widehat{H}^V_{\frac{p_n}{q_n}}(\lambda, {\theta}^{(n)})).$
	By the weak continuity of the spectrum (Proposition \ref{jian}), it follows that
	$z\in\sigma(\widehat{H}^V_{\alpha}(\lambda, \tilde{\theta}))$ for some $\tilde{\theta}.$
	Since $\alpha\in\mathbb{T}\verb|\|\mathbb{Q}$, $\sigma(\widehat{H}^V_{\alpha}(\lambda, {\theta}))$ is independent of $\theta$. Thus, $z\in\sigma(\widehat{H}^V_{\alpha}(\lambda, {\theta}))$ for any $\theta\in\mathbb{T}$.
\end{proof}

\begin{remark}\label{go-cla}
	One can also compare this result with \eqref{schro}, if the potential
	$V(\cdot)$ is $t$-H\"{o}lder continuous, 
	sharp Gordon's Lemma  \cite{AYZ} shows that if $$t\beta(\alpha)> L(z),$$ then $z$ is not an eigenvalue of $H_{V,\alpha,\theta}$. 
\end{remark}

\subsection{Proof of Proposition \ref{res2}:}

Now we finish the proof of Proposition \ref{res2}:

\begin{proof}[Case $d=1$:] By  Lemma \ref{0}, we only need to prove $C_{\lambda}\subset\sigma(\widehat{H}^V_{\alpha}(\lambda, \theta))$. 
	For any $z\in C_{\lambda}$, if $t \beta(\alpha)>G_1(z)-\log\lambda$, 
	by Proposition \ref{gordon},  $z\in \sigma(\widehat{H}^V_{\alpha}(\lambda, \theta))$. Since 
	$\{\alpha| t \beta(\alpha)>G_1(z)-\log\lambda\}$ is dense in $\mathbb{T},$ again by  the weak continuity  (Proposition \ref{jian}), one conclude
	$C_{\lambda}\subset\sigma(\widehat{H}^V_{\alpha}(\lambda, \theta))$ for any $\alpha\in\mathbb{T}\verb|\|\mathbb{Q}$. \end{proof}

\begin{proof}[Case $d>1$:]
	If $d>1$, the proof is similar to the  $d=1$. Here, we only provide a brief outline of the proof.
	\begin{lemma}\label{dio}
		Let $\lambda>0$, $z\in C_{\lambda}$. Then for any $\alpha_1,\dots,\alpha_{d-1}\in\mathbb{Q}$,  there exists $\gamma(z,\alpha_1,\dots,\alpha_{d-1})$, such that if $\alpha_d\in\mathbb{T}\verb|\|\mathbb{Q}$ with $\beta(\alpha_d)>\gamma(z,\alpha_1,\dots,\alpha_{d-1})$, then $z\in\sigma(\widehat{H}^V_{\alpha}(\lambda, \theta))$  for $\alpha=(\alpha_1,\dots,\alpha_{d-1},\alpha_d)$ and any $\theta\in \mathbb{T}^d$.
	\end{lemma}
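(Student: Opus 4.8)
The plan is to rerun the one-frequency argument of Proposition \ref{gordon}, freezing the $d-1$ rational coordinates and subjecting only the Liouvillean coordinate $\alpha_d$ to periodic approximation. Write $\alpha_i=c_i/q_0$ for $1\le i\le d-1$ with common denominator $q_0$, set $\alpha'=(\alpha_1,\dots,\alpha_{d-1})$, let $p_n/q_n$ be the continued-fraction convergents of $\alpha_d$, and put $\alpha^{(n)}=(\alpha',p_n/q_n)\to\alpha$. Since $\alpha^{(n)}$ is rational, $\widehat{H}^V_{\alpha^{(n)}}(\lambda,\theta)$ is periodic in $n$ with period $Q_n=\mathrm{lcm}(q_0,q_n)$. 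The three moves are: (i) build a phase $\theta^{(n)}$ for which the one-period transfer product has modulus exactly $\lambda^{Q_n}$; (ii) conclude $z\in\sigma(\widehat{H}^V_{\alpha^{(n)}}(\lambda,\theta^{(n)}))$ from Lemma \ref{periodic}; (iii) pass to the limit via the weak continuity of the spectrum, Proposition \ref{jian}.

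Step (i) is the technical core, and the novelty relative to $d=1$ is that the rotation $\theta\mapsto\theta+\alpha$ on $\mathbb{T}^d$ is no longer uniquely ergodic when $\alpha'$ is rational: for generic phase the Birkhoff averages of $\log|z-V(\theta+j\alpha)|$ converge not to $G_d(z)$ but to $\Phi(\theta'):=q_0^{-1}\sum_{s=0}^{q_0-1}\int_{\mathbb{T}}\log|z-V(\theta'+s\alpha',t)|\,\mathrm{d}t$, a function of the finite orbit $\{\theta'+s\alpha'\}_s$ of $\theta'=(\theta_1,\dots,\theta_{d-1})$. Because $\Phi$ is continuous on the connected torus $\mathbb{T}^{d-1}$ with spatial mean $\int_{\mathbb{T}^{d-1}}\Phi\,\mathrm{d}\theta'=G_d(z)>\log\lambda$, one can fix a sheet $\theta'_\ast$ with $\log\lambda<\Phi(\theta'_\ast)<G_d(z)+\varepsilon$. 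On that sheet, unique ergodicity of $\phi\mapsto\phi+q_0\alpha_d$ (after the usual $\delta$-regularization of $\log|z-V|$, as in Proposition \ref{gordon}) yields, for all large $n$ and a suitable $\phi$, a lower bound $\prod_{j=0}^{Q_n-1}|z-V((\theta'_\ast,\phi)+j\alpha)|\ge e^{Q_n(\Phi(\theta'_\ast)-\varepsilon)}>\lambda^{Q_n}$ along the $\alpha$-rotation, together with the block bounds $\prod_{j\in I}|z-V((\theta'_\ast,\phi)+j\alpha^{(n)})|\le e^{|I|(G_d(z)+\varepsilon)}$ for $|I|\ge K(\varepsilon)$. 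A telescoping comparison of the $\alpha$- and $\alpha^{(n)}$-period products, controlled by the $t$-H\"older modulus of $V$ and by $|\alpha_d-p_n/q_n|\le (q_nq_{n+1})^{-1}$, produces an error $\lesssim Q_n^{1+t}e^{(G_d(z)+\varepsilon)Q_n}(q_nq_{n+1})^{-t}$; since $Q_n/q_n\le q_0$, this is $o(\lambda^{Q_n})$ along a subsequence as soon as $t\beta(\alpha_d)>q_0(G_d(z)-\log\lambda)$, which is precisely the hypothesis with $\gamma(z,\alpha_1,\dots,\alpha_{d-1}):=q_0(G_d(z)-\log\lambda)/t$. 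Hence, along that subsequence, the $\alpha^{(n)}$-period product at $(\theta'_\ast,\phi)$ exceeds $\lambda^{Q_n}$; since $z\in R(V)$ the same product vanishes at any phase $\theta^\ast$ with $V(\theta^\ast)=z$, so the intermediate value theorem on the connected $\mathbb{T}^d$ furnishes $\theta^{(n)}$ with $\prod_{j=0}^{Q_n-1}|z-V(\theta^{(n)}+j\alpha^{(n)})|=\lambda^{Q_n}$.

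Steps (ii) and (iii) follow the one-frequency template. For (ii), $\widehat{H}^V_{\alpha^{(n)}}(\lambda,\theta^{(n)})$ has period $Q_n$, so Lemma \ref{periodic} applies with $a=0$, $c=\lambda$, $b_j=V(\theta^{(n)}+j\alpha^{(n)})$: choosing $\phi_n\in[0,2\pi]$ with $\prod_{j=0}^{Q_n-1}(z-b_j)=(-1)^{Q_n+1}\lambda^{Q_n}e^{i\phi_n}$ gives $\det(z-J(\phi_n))=0$, hence $z\in\sigma(\widehat{H}^V_{\alpha^{(n)}}(\lambda,\theta^{(n)}))\subset\bigcup_{\theta}\sigma(\widehat{H}^V_{\alpha^{(n)}}(\lambda,\theta))$. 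For (iii), $\alpha^{(n)}\to\alpha$ with $z$ in this set for every $n$ along that subsequence, so Proposition \ref{jian} yields $z\in\bigcup_{\theta}\sigma(\widehat{H}^V_{\alpha}(\lambda,\theta))$; and since $q_0\alpha_d$ is irrational, $\theta\mapsto\theta+\alpha$ is minimal on each of its finitely many invariant sub-tori, on which the spectrum is constant, so the conclusion propagates to every $\theta\in\mathbb{T}^d$, exactly as minimality of the irrational rotation did when $d=1$.

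The step I expect to be the genuine obstacle is the product estimate in (i): in the uniquely ergodic world one simply reads off $\prod_{j=0}^{k-1}|z-V(\theta+j\alpha)|\asymp e^{kG_d(z)}$ uniformly in $\theta$, whereas here one must first locate a rational ``sheet'' $\theta'_\ast$ on which the correct, phase-dependent limiting density $\Phi(\theta'_\ast)$ still dominates $\log\lambda$, and only then let the Liouvillean coordinate drive the periodic approximation. After that, the telescoping bound, Lemma \ref{periodic}, and the limiting argument run in parallel with the one-frequency proof.
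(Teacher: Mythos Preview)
Your approach is essentially the paper's own: freeze the rational coordinates, approximate the irrational one by continued-fraction convergents, locate a phase where the period-product equals $\lambda^{Q_n}$ via the intermediate value theorem, invoke Lemma~\ref{periodic}, and pass to the limit through Proposition~\ref{jian}. Your threshold $\gamma=q_0(G_d(z)-\log\lambda)/t$ also matches the paper's (written there for $d=2$ as $(G_2(z)-\log\lambda)q/t$). Where the paper simply says ``by telescoping arguments, similar as Lemma~\ref{equ-p},'' you correctly spell out the one genuine subtlety it suppresses: the rotation $\theta\mapsto\theta+\alpha$ is not uniquely ergodic on $\mathbb{T}^d$ once $\alpha'$ is rational, so one must first select a sheet $\theta'_\ast$ on which the fibrewise ergodic limit $\Phi(\theta'_\ast)$ exceeds $\log\lambda$ before rerunning the $d=1$ estimates.

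Two small caveats. First, your assertion that $\Phi$ is continuous on $\mathbb{T}^{d-1}$ is not guaranteed under the paper's standing hypotheses (H\"older $V$ with $\log|z-V|\in L^1$): the fibre integral can drop to $-\infty$ on a null set of $\theta'$. This is easily repaired, since you only need \emph{some} $\theta'_\ast$ with $\log\lambda<\Phi(\theta'_\ast)<\infty$, which follows from $\int\Phi=G_d(z)>\log\lambda$ together with the uniform upper bound $\Phi\le\log(|z|+\|V\|_\infty)$; the resulting $\gamma$ may then exceed $q_0(G_d(z)-\log\lambda)/t$, but the lemma only asserts existence of $\gamma$. Second, your final sentence---that minimality on each invariant sub-torus ``propagates'' the conclusion to \emph{every} $\theta\in\mathbb{T}^d$---is a non sequitur: Proposition~\ref{jian} lands you on one sub-torus, and constancy of the spectrum on each sub-torus does not transport it across sub-tori. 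The paper's statement ``any $\theta$'' shares this defect; however, the downstream application (the $d>1$ case of Proposition~\ref{res2}) only feeds $z\in\bigcup_\theta\sigma(\widehat{H}^V_\alpha(\lambda,\theta))$ into a second invocation of Proposition~\ref{jian} with rationally independent limiting frequency, so the overreach is harmless for the main result.
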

	\begin{proof}
		For simplicity, we only present the proof for $d=2$, the other case is analogous.
		Let $z\in C_{\lambda}$. Fix any  $\alpha_1=\frac{p}{q}\in\mathbb{Q}$.
		Choose $\gamma(z,\alpha_1)=\frac{(G_2(z)-\log\lambda )q}{t}$,
		where $t$ is the H\"{o}lder exponent of $V(\cdot).$
		For any $\alpha=(\alpha_1,\alpha_2)$, choose $\alpha_2\in\mathbb{T}\verb|\|\mathbb{Q}$ with $\beta(\alpha_2)>\gamma(z,\alpha_1)$.
		Consider the continued fraction expansion of $\alpha_2\in\mathbb{T}\verb|\|\mathbb{Q}$, without loss of generality, we assume $$\beta(\alpha_2)=\lim_{n\rightarrow\infty}\frac{\ln q_{n+1}}{q_n},$$	and denote $\alpha^{(n)}=(\frac{p}{q},\frac{p_n}{q_n})$.
		
		By  telescoping arguments, similar as Lemma \ref{equ-p},   for sufficiently large $n$,  there exists ${\theta}^{(n)}$ such that  
		$$\prod_{j=0}^{qq_n-1}|z-V({\theta}^{(n)}+j{\alpha^{(n)}})| =\lambda^{qq_n},$$
		This means there exists $\phi_n\in[0,2\pi]$, such that
		\begin{equation*}\label{deter}
			\prod_{j=0}^{qq_n-1}(z-V({\theta}^{(n)}+j{\alpha^{(n)}}))=(-1)^{qq_n+1}\lambda^{qq_n}e^{i\phi_n},
		\end{equation*}
		consequently $z\in\sigma(\widehat{H}^V_{\alpha^{(n)}}(\lambda, {\theta}^{(n)})).$ 
		Then the result follows from  Proposition \ref{jian}.	\end{proof}

	Once we have this, define the set $$ A(z,\alpha_1,\dots,\alpha_{d-1})=\{\alpha=(\alpha_1,\dots,\alpha_{d-1},\alpha_d)|\alpha_d\in\mathbb{T}\verb|\|\mathbb{Q},\beta(\alpha_d)>\gamma(z,\alpha_1,\dots,\alpha_{d-1})\}.$$
	For any $\alpha=(\alpha_1,\dots,\alpha_{d-1},\alpha_d)\in\mathbb{T}^d$ and any $\epsilon>0$, there exists $\tilde{\alpha}_1,\dots,\tilde{\alpha}_{d-1}\in\mathbb{Q}$, such that $\sum_{i=1}^{d-1}|\alpha_i-\tilde{\alpha}_{i}|<\frac{\epsilon}{2}.$ Since  $\{\alpha|\beta(\alpha)>\gamma(z,\tilde{\alpha}_1,\dots,\tilde{\alpha}_{d-1})\}$ is dense in $\mathbb{T},$ we can find $\tilde{\alpha}_d$ with 
	$$\beta(\tilde{\alpha}_d)>\gamma(z,\tilde{\alpha}_1,\dots,\tilde{\alpha}_{d-1})$$ such that 	 $|\alpha_d-\tilde{\alpha}_d|<\frac{\epsilon}{2}.$
	Therefore $$\cup_{\alpha_1,\dots,\alpha_{d-1}\in\mathbb{Q}^{d-1}}A(z,\alpha_1,\dots,\alpha_{d-1})$$ is a dense set in $\mathbb{T}^d.$ 
	By Lemma  \ref{dio} and Proposition \ref{jian}, we have $C_{\lambda}\subset\sigma(\widehat{H}^V_{\alpha}(\lambda, \theta))$ for any rationally independent $\alpha$. This, combined with Lemma \ref{0}, gives the conclusion.
\end{proof}

\section{Proof of main results}
\subsection{Basic properties on $G_d(z)$}
For the convenience of proof,  we sometimes write $z=x+iy$ and $G_d(x,y):=G_d(z).$
\begin{lemma}\label{cont}
	Let $V(\cdot)\in C^{\omega}(\mathbb{T}^d,\mathbb{C})$, then $G_d(z)$ is a continuous function, and $G_d(z)$ can attain its minimum in $\C$. Moreover, if $V(\cdot)\in C^{\omega}(\mathbb{T}^d,\mathbb{R})$, 
	\begin{enumerate}
		\item\label{p3} $G_d(x,y)=G_d(x,-y)$;
		\item\label{p}$ \frac{ \partial}{\partial y}G_d(x,y)>0$ for any $x\in \mathbb{R}$, $y>0$; 
		\item\label{p1} $ \frac{ \partial}{\partial x}G_d(x,y)>0$ for any $y\in \mathbb{R}$, $x>\max_{\theta\in\mathbb{T}^d}V(\theta)$; 
		\item \label{p2}  	$ \frac{ \partial}{\partial x}G_d(x,y)<0$ for any $y\in \mathbb{R}$, $x<\min_{\theta\in\mathbb{T}^d}V(\theta)$. 
	\end{enumerate}
	Consequently, $G_d(z)$ can only attain its minimum in $R(V)$.
\end{lemma}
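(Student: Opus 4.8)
\emph{Overview.} The plan is to first establish the two global assertions — continuity of $G_d$ and the existence of a minimizer in $\mathbb{C}$ — then to compute the first partial derivatives of $G_d$ in the real case and read off their signs, and finally to combine the resulting monotonicity to pin the minimizer down to $R(V)$. For continuity, fix $z_n\to z_0$ with all $z_n$ in a bounded set: then $|z_n-V(\theta)|\le M$ uniformly in $\theta$, so $\log|z_n-V(\theta)|\le\log M$, and the only danger is the negative part near the zeros of $\theta\mapsto z_n-V(\theta)$. Here the analyticity of $V$ enters: since $V$ is non-constant analytic, a {\L}ojasiewicz-type estimate gives $\big|\{\theta\in\mathbb{T}^d:|z-V(\theta)|<\varepsilon\}\big|\le C\varepsilon^{\kappa}$ for some $\kappa>0$, locally uniformly in $z$, so the family $\{\log|z_n-V(\cdot)|\}_n$ is uniformly integrable on $\mathbb{T}^d$. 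Combined with the a.e.\ pointwise convergence $\log|z_n-V(\theta)|\to\log|z_0-V(\theta)|$ (the set $V^{-1}(z_0)$ has measure zero), Vitali's convergence theorem yields $G_d(z_n)\to G_d(z_0)$. Finally $|z-V(\theta)|\ge|z|-\|V\|_\infty$ shows $G_d(z)\to+\infty$ as $|z|\to\infty$, so the continuous function $G_d$ attains its infimum on a sufficiently large closed ball, hence on $\mathbb{C}$.

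\emph{Partial derivatives in the real case.} Write $z=x+iy$, so that $\log|z-V(\theta)|=\tfrac12\log\big((x-V(\theta))^2+y^2\big)$; evenness in $y$ is exactly (1). Differentiating under the integral sign — legitimate on any region on which the integrand is bounded — gives
$$
\partial_y G_d(x,y)=\int_{\mathbb{T}^d}\frac{y}{(x-V(\theta))^2+y^2}\,\mathrm{d}\theta,
\qquad
\partial_x G_d(x,y)=\int_{\mathbb{T}^d}\frac{x-V(\theta)}{(x-V(\theta))^2+y^2}\,\mathrm{d}\theta .
$$
For (2): when $y>0$ the first integrand is pointwise positive and, on $\{y'\ge y/2\}$, bounded by $2/y$, so $\partial_y G_d(x,y)>0$. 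For (3) and (4): if $x>\max_\theta V(\theta)$ (resp.\ $x<\min_\theta V(\theta)$), then $x-V(\theta)$ keeps a fixed sign for every $\theta$ and $|z-V(\theta)|$ stays bounded away from $0$ near $z$, so the second integrand is bounded and everywhere positive (resp.\ negative), giving $\partial_x G_d(x,y)>0$ (resp.\ $<0$).

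\emph{Localizing the minimizer, and the main obstacle.} By (1)--(2), for each fixed $x$ the map $y\mapsto G_d(x,y)$ is strictly increasing on $[0,\infty)$, so $\min_y G_d(x,y)=G_d(x,0)$ and the global minimizer (which exists by the first step) must lie on the real axis. On the real axis, (3) makes $x\mapsto G_d(x,0)$ strictly increasing for $x>\max_\theta V(\theta)$ and (4) makes it strictly decreasing for $x<\min_\theta V(\theta)$, so the minimum over $\mathbb{R}$ is attained in $[\min_\theta V(\theta),\max_\theta V(\theta)]$, which equals $R(V)$ because $\mathbb{T}^d$ is connected and $V$ is real and continuous; hence $G_d$ attains its minimum in $R(V)$. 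I expect the only genuinely delicate point to be the continuity of $G_d$ at points of $R(V)$: away from $R(V)$ the function is even harmonic, so continuity is automatic, but on $R(V)$ one needs the uniform sublevel-set estimate for $|z-V(\theta)|$, and this is precisely where the analyticity (equivalently, the transversality) of $V$ is used. Everything in (1)--(4) is then a one-line differentiation under the integral plus a sign check.
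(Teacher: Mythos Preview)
Your argument is correct and, for items (1)--(4) and the localization of the minimizer, essentially identical to the paper's. The one genuine divergence is in the proof of continuity of $G_d$: the paper does not argue via uniform integrability at all, but instead observes that $G_d(z)$ is the Lyapunov exponent of the analytic quasi-periodic cocycle $\big(\alpha,\operatorname{diag}(z-V(\theta),z-V(\theta))\big)$ and then invokes a black-box result of Powell on continuity of Lyapunov exponents for analytic multi-frequency $M(2,\mathbb{C})$-cocycles with $\det A\not\equiv 0$. Your route via the {\L}ojasiewicz sublevel-set estimate and Vitali's theorem is more elementary and self-contained, and makes transparent exactly where analyticity is used (as you correctly flag, only at points of $R(V)$), at the cost of writing out the uniform-integrability step; the paper's route is a one-line citation but imports a substantially heavier theorem.
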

\begin{proof}
	To establish the continuity of $G_d(z)$, we recall the following theorem:
	\begin{theorem}\cite{Powell}\label{powell}
		Suppose $\omega = (\omega_1,\cdots,\omega_d) \in \T^d.$ Let $(A, \omega)$ be an analytic quasi-periodic $M(2,\C)$-cocycle.
		Suppose, moreover, that $\det(A) \not\equiv 0.$ Then
		$L(A,\omega)$ is continuous in $A$ for any $\omega \in \T^d.$ 
	\end{theorem}
	Note that $G_d(z)$ is Lyapunov exponent of the quasi-periodic cocycle $$(\alpha,\begin{pmatrix}
		z-V(\theta)&0\\0&z-V(\theta)
	\end{pmatrix}).$$ 
By virtue of Theorem \ref{powell}, we conclude that  $G_d(z)$ is continuous on $\mathbb{C}$. Because $\lim\limits_{z\rightarrow\infty}G_d(z)=\infty$, there is an $M>0$ such that $G_d(z)>1$ when $|z|>M$. On the closed disk $D_M:=\{z\big||z|\leq M\}$, $G_d(z)$ attains a minimum value $m$ since it is continuous on $D_M$. Consequently, $\min\{1,m\}$ is the minimum value of $G_d(z)$ over $\C$.
	
	If $V(\cdot)$ is a real analytic function,    then one can compute
\begin{equation}\label{continuous}
	G_d(x,y)=\int_{\mathbb{T}^d}\log|x+iy-V(\theta)|\mathrm{d}\theta=\frac{1}{2}\int_{\mathbb{T}^d}\log((x-V(\theta))^2+y^2)\mathrm{d}\theta.
\end{equation}  
From \eqref{continuous}, it is obvious that $G_d(x,y)=G_d(x,-y)$.  For the other conclusions, we only prove Lemma \ref{cont}\eqref{p}, since the proof of  the others are similar.
Fix any $x\in\mathbb{R}$, for any $y>0$, $\log((x-V(\theta))^2+y^2)$ and $\frac{2y}{(x-V(\theta))^2+y^2}$ are continuous with $(\theta,y)$ in $\T^d\times[\frac{y}{2},\frac{3}{2}y]$. Therefore, we can interchange the order of integration and differentiation.
Hence, we have $\frac{ \partial}{\partial y}G_d(x,y)=\int_{\mathbb{T}^d}\frac{2y}{(x-V(\theta))^2+y^2}\mathrm{d}\theta>0.$

According to \eqref{p3}\eqref{p}, we have $G_d(x,y)>G_d(x,0)$ for any $x\in \R$, $y\neq0.$ According to \eqref{p1}\eqref{p2}, we have  $G_d(x,0)>G_d(\max_{\theta\in\mathbb{T}^d}V(\theta),0)$ for any $x>\max_{\theta\in\mathbb{T}^d}V(\theta)$, and $G_d(x,0)>G_d(\min_{\theta\in\mathbb{T}^d}V(\theta),0)$ for any $x<\min_{\theta\in\mathbb{T}^d}V(\theta)$. Therefore, $G_d(z)$ can only attain its minimum in $R(V)$.
\end{proof}
$G_d(z)$ is not only a continuous function, but a subharmonic function.

\begin{lemma}\label{subharmonic}
	$G_d(z)$ is  subharmonic on $\mathbb{C}$, and harmonic on $\mathbb{C}\backslash R(V).$
\end{lemma}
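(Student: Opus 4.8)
# Proof Proposal for Lemma 3.5 (Subharmonicity of $G_d$)

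The plan is to recognize $G_d(z) = \int_{\mathbb{T}^d} \log|z - V(\theta)|\,\mathrm{d}\theta$ as an average of logarithmic potentials and to exploit the fact that subharmonicity is preserved under such averaging, together with the harmonicity of $\log|z-w|$ away from $w$. First I would recall that for each fixed $\theta$, the function $z \mapsto \log|z - V(\theta)|$ is subharmonic on $\mathbb{C}$ (it is harmonic on $\mathbb{C}\setminus\{V(\theta)\}$ and equals $-\infty$ at $z = V(\theta)$, with the sub-mean-value property holding everywhere). The standing assumption that $\log|z - V(\cdot)| \in L^1(\mathbb{T}^d)$ for all $z \in \mathbb{C}$ guarantees the integral defining $G_d$ is finite (not identically $-\infty$), so $G_d$ is a genuine candidate for a subharmonic function rather than the constant $-\infty$.

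The key steps are: (i) verify the sub-mean-value inequality for $G_d$ by interchanging the order of the $\mathrm{d}\theta$-integration with the averaging over circles $\{|z - z_0| = r\}$, which is legitimate by Tonelli/Fubini since the integrand $-\log|z-V(\theta)|$ is bounded below on compact sets away from... — more carefully, one splits into the positive and negative parts and uses the $L^1$ assumption uniformly on a neighborhood, or invokes the standard fact (e.g. from Hörmander or Ransford's \emph{Potential Theory}) that an integral of a family of subharmonic functions against a positive measure, when finite, is subharmonic provided it is upper semicontinuous — and here upper semicontinuity of $G_d$ follows from Fatou's lemma applied to $-\log|z-V(\theta)|$; (ii) for the harmonicity on $\mathbb{C}\setminus R(V)$, observe that when $z_0 \notin R(V) = \overline{\{V(\theta):\theta\in\mathbb{T}^d\}}$, there is a disk $B(z_0,\rho)$ disjoint from $R(V)$, so for $z$ in this disk and all $\theta$, $z \mapsto \log|z - V(\theta)|$ is harmonic; differentiating under the integral sign (justified because $\frac{1}{|z-V(\theta)|}$ and its derivatives are uniformly bounded for $z \in B(z_0,\rho/2)$, $\theta \in \mathbb{T}^d$) shows $\Delta G_d = \int_{\mathbb{T}^d} \Delta_z \log|z-V(\theta)|\,\mathrm{d}\theta = 0$ there; alternatively one notes $G_d$ is the real part of the (locally defined) analytic function $\int_{\mathbb{T}^d}\log(z - V(\theta))\,\mathrm{d}\theta$ with a suitable branch, hence harmonic.

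The main obstacle I anticipate is the justification of interchanging integration over $\mathbb{T}^d$ with the circular averaging in the subharmonicity argument at points $z_0 \in R(V)$, where $\log|z_0 - V(\theta)|$ may fail to be locally bounded in $\theta$ (it blows down to $-\infty$ on the set $\{\theta : V(\theta) = z_0\}$). The clean way around this is to avoid computing the Laplacian directly and instead cite the general structural result: if $\mu$ is a positive finite measure on a space $\Omega$ and $\{u_\omega\}_{\omega\in\Omega}$ is a family of subharmonic functions on an open set $U$ such that $\omega \mapsto u_\omega(z)$ is measurable for each $z$ and $z \mapsto \int_\Omega u_\omega(z)\,\mathrm{d}\mu(\omega)$ is finite and upper semicontinuous, then this integral is subharmonic on $U$. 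We supply the upper semicontinuity of $G_d$ from Lemma 4.1 (continuity, under the analytic hypothesis) or directly via Fatou, and the sub-mean-value property is then immediate by Tonelli applied to the nonnegative function $\int_0^{2\pi} \bigl(u_\omega(z_0) - u_\omega(z_0 + re^{it})\bigr)\,\frac{\mathrm{d}t}{2\pi} \ge 0$, which is measurable and nonnegative in $\omega$, so its $\mu$-integral — namely $G_d(z_0) - \frac{1}{2\pi}\int_0^{2\pi} G_d(z_0+re^{it})\,\mathrm{d}t$ — is nonnegative, giving exactly the sub-mean-value inequality for $G_d$. This completes subharmonicity on all of $\mathbb{C}$, and harmonicity on $\mathbb{C}\setminus R(V)$ follows from the differentiation-under-the-integral argument above.
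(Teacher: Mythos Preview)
Your approach is essentially the same as the paper's for the subharmonicity part: both invoke the standard potential-theory lemma (from Ransford) that an integral of a measurable family of subharmonic functions against a finite measure is subharmonic, provided it is locally bounded above. One slip to fix: in your Tonelli sketch you write $\int_0^{2\pi}\bigl(u_\omega(z_0)-u_\omega(z_0+re^{it})\bigr)\tfrac{dt}{2\pi}\ge 0$ and conclude $G_d(z_0)-\tfrac{1}{2\pi}\int G_d(z_0+re^{it})\,dt\ge 0$; the sub-mean-value inequality for subharmonic functions goes the other way, so both displayed quantities are $\le 0$, and what you actually want is the reverse inequality to conclude $G_d(z_0)\le$ (circular average). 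This is a sign slip, not a structural problem, and does not affect the argument once you cite the structural lemma directly.

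For harmonicity on $\mathbb{C}\setminus R(V)$ the paper takes a slightly slicker route than yours: instead of differentiating under the integral to compute $\Delta G_d=0$, it applies the very same Ransford lemma to $v(z,\theta)=-\log|z-V(\theta)|$ on the open set $\mathbb{C}\setminus R(V)$ (where this is again subharmonic, indeed harmonic, and locally bounded above), obtaining that $-G_d$ is subharmonic there, hence $G_d$ is both sub- and superharmonic, i.e.\ harmonic. Your differentiation-under-the-integral argument is equally valid and perhaps more concrete; the paper's version has the virtue of reusing a single black box twice with no additional estimates.
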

\begin{proof}
	Let us first recall a basic fact in potential theory.
	\begin{lemma}\cite{Ransford}\label{potential}
		Let $(\Omega,\mu)$ be a measure space with $\mu(\Omega)<\infty$, let $U$ be an open subset of $\mathbb{C}$, and let $v:U\times\Omega\rightarrow[-\infty,\infty)$ be a function such that:
		\begin{itemize}
			\item $v$ is measurable on $U\times\Omega;$
			\item $z\mapsto v(z,\omega)$ is subharmonic on $U$ for each $\omega\in \Omega;$
			\item $z\mapsto\sup_{\omega\in\Omega}v(z,\omega)$ is locally bounded above on $U$.
		\end{itemize}
		Then $u(z):=\int_{\Omega}v(z,\omega)\mathrm{d}\mu(\omega)$ is subharmonic on $U$.
	\end{lemma}
	Set $\Omega=\mathbb{T}^d$, $\mathrm{d}\mu=\mathrm{d}\theta$, $U=\mathbb{C}$. Applying lemma \ref{potential} with $v(z,\theta)=\log|z-V(\theta)|$. We see that $G_d(z)$ is subharmonic on $\mathbb{C}$. Applying the same theorem with $v(z,\theta)=-\log|z-V(\theta)|$ on $\mathbb{C}\backslash R(V)\times\mathbb{T}^d$, we also find that $G_d(z)$ is superharmonic on $\mathbb{C}\backslash R(V)$, and hence harmonic there.
\end{proof}

\subsection{Proof of main results}

By Corollary \ref{rot}, in the following, we just assume $\lambda \in \R$ with $\lambda>0$.  

\subsubsection{Proof of Theorem \ref{app1}}
Theorem \ref{app1} is a consequence of Theorem \ref{thm}.\qed

\subsubsection{Proof of Corollary \ref{mfre}}
Suppose $\tilde{\theta}=(\tilde{\theta}_1,\cdots,\tilde{\theta}_d)$ and 
$$\text{rank}(\frac{\partial(\Re V)}{\partial \theta_i}(\tilde{\theta}),\frac{\partial(\Im V)}{\partial \theta_i}(\tilde{\theta}))_{1\leq i\leq d}=2.$$ Then there exist ${\theta}_l,{\theta}_k$ such that $$\begin{vmatrix}
	\frac{\partial(\Re V)}{\partial \theta_l}(\tilde{\theta}) & \frac{\partial(\Im V)}{\partial \theta_l}(\tilde{\theta})\\
	\frac{\partial(\Re V)}{\partial \theta_k}(\tilde{\theta}) & \frac{\partial(\Im V)}{\partial \theta_k}(\tilde{\theta})\\
\end{vmatrix}\neq0.$$
By inverse function Theorem, $$(\theta_l,\theta_k)\mapsto(\Re V(\tilde{\theta}_1,\cdots,\theta_l,\cdots,\theta_k,\cdots,\tilde{\theta}_d),\Im V(\tilde{\theta}_1,\cdots,\theta_l,\cdots,\theta_k,\cdots,\tilde{\theta}_d))$$ is homeomorphism in the neighborhood of $(\tilde{\theta}_l,\tilde{\theta}_k)$, and $R(V)$ has interior.

On  the other hand, by Lemma \ref{cont},  $\min_{z\in \C}G_d(z)$ exists. Therefore
if $|\lambda|<e^{\min_{z\in \C}G_d(z)}$, then $G_d(z) > \log \lambda$.  Theorem \ref{thm} imply that
$\sigma({H}^V_{\alpha}(\lambda, \omega))=C_{\lambda}=R(V)$, the result then follows. \qed

\subsubsection{Proof of Theorem \ref{two4}}
Without loss of generality, we can assume $v_0=0$ by shifting the spectrum, and distinguish the proof into two cases:\\

\smallskip
\textbf{Case 1: }$l>0$.
For any $0<|z|<|v_m|-\sum_{k=l}^{m-1}|v_k|$, consider $f_1(z,\textbf{z})=z-v_m\textbf{z}^m$, $f_2(\textbf{z})=-\sum_{k=l}^{m-1}v_k\textbf{z}^k$. When 
$|\textbf{z}|=1$,
$$|f_1(z,\textbf{z})|\geq |v_m|-|z|>\sum_{k=l}^{m-1}|v_k|\geq|f_2(\textbf{z})|.$$
According to Rouche's theorem, $f(z,\textbf{z}):= z-\sum_{k=l}^{m}v_k\textbf{z}^k$ has $m$ roots $\textbf{z}_1,\textbf{z}_2,\cdots,\textbf{z}_m$, where $|\textbf{z}_i|<1$.

By Jensen's formula and Vieta's theorem, 
$$G_d(z)=\log|z|-\sum_{i=1}^{m}\log|\mathbf{z}_i|=\log|v_m|.$$
If $|\lambda|=|v_m|$, according to Theorem \ref{thm}, we have $$\{z\big||z|\leq|v_m|-\sum_{k=l}^{m-1}|v_k|\}\subset\sigma({H}^V_{\alpha}(\lambda, \theta))$$ since the  spectrum is a closed set.\\

\smallskip
\textbf{Case 2:} $l<0$.
We assume $v_l\neq0$. For any $|z|<|v_m|-\sum_{k=l}^{m-1}|v_k|$, consider $f_1(\textbf{z})=z\textbf{z}^{-l}-v_m\textbf{z}^{m-l}$, $f_2(z,\textbf{z})=-\sum_{k=0}^{m-l-1}v_{k+l}\textbf{z}^k$. When 
$|\textbf{z}|=1$,
$$|f_1(\textbf{z})|\geq|v_m|- |z|>\sum_{k=l}^{m-1}|v_k|\geq|f_2(z,\textbf{z})|.$$
According to Rouche's theorem, $$f(z,\textbf{z}):=z\textbf{z}^{-l}-\sum_{k=0}^{m-l}v_{k+l}\textbf{z}^k$$ has $m-l$ roots $\textbf{z}_1,\textbf{z}_2,\cdots,\textbf{z}_{m-l}$, where $|\textbf{z}_i|<1$.

By Jensen's formula and Vieta's theorem, 
$$G_d(z)=\log|v_l|-\sum_{i=1}^{m-l}\log|z_i|=\log|v_m|.$$ 
If $|\lambda|=|v_m|$, according to Theorem \ref{thm}, we have $$\{z\big||z|\leq|v_m|-\sum_{k=l}^{m-1}|v_k|\}\subset\sigma({H}^V_{\alpha}(\lambda, \theta))$$ since the spectrum is a closed set. \qed

\subsubsection{Proof of Corollary \ref{shape}}
As $R(V)=\partial D$ for a bounded open set $D$, the subset $C_{\lambda}\subset R(V)$ lacks an interior. Therefore $\mathring{\sigma}({H}^V_{\alpha}(\lambda, \omega))\subset P_{\lambda}$.
The proof is divided into two parts:\\

\smallskip
\textbf{Part 1} 	We begin by proving that $\mathring{\sigma}({H}^V_{\alpha}(\lambda, \omega))\subset D$.
If $\mathring{\sigma}({H}^V_{\alpha}(\lambda, \omega))\cap(\mathbb{C}\backslash{D})\neq\emptyset$. Since $\mathring{\sigma}({H}^V_{\alpha}(\lambda, \omega))$ is  open, there exists an open set $$B\subset\mathring{\sigma}({H}^V_{\alpha}(\lambda, \omega))\subset P_{\lambda}$$ such that $B\subset\mathbb{C}\backslash\overline{D}$. Lemma \ref{subharmonic} guarantees that $G_d(z)$ is harmonic on $\mathbb{C}\backslash\overline{D}$, by uniqueness theorem of harmonic function, $G_d(z)\equiv \log\lambda$ on $\mathbb{C}\backslash\overline{D}$. However, this contradicts the fact that $\lim\limits_{z\rightarrow\infty}G_d(z)=\infty$.\\

\smallskip
\textbf{Part 2}	Since $\mathring{\sigma}({H}^V_{\alpha}(\lambda, \omega))\subset D$, without loss of generality (by relabelling $D_j$),   their exists $l$ with $1\leq l\leq k$, such that $\mathring{\sigma}({H}^V_{\alpha}(\lambda, \omega))\subset\cup_{j=1}^l D_{j}$, where $D_j$ are the connected components of $D$.  Then their exists open subset $B_{j}\subset D_{j}$ such that $G_d(z)\equiv\log\lambda$ on $B_{j}$. Since $G_d(z)$ is harmonic on $\cup_{j=1}^{l}D_{j}$ by Lemma \ref{subharmonic},  we 
have $G_d(z)\equiv\log\lambda$ on $D_{j}$ by uniqueness theorem of harmonic function. According to Theorem \ref{thm}, $D_{j}\subset\sigma({H}^V_{\alpha}(\lambda, \omega)).$
Since $D_{j}$ is an open set, we also have $D_{j}\subset\mathring{\sigma}({H}^V_{\alpha}(\lambda, \omega)).$ Therefore $\cup_{j=1}^l D_{j}\subset\mathring{\sigma}({H}^V_{\alpha}(\lambda, \omega)).$
Consequently, $\mathring{\sigma}({H}^V_{\alpha}(\lambda, \omega))=\cup_{j=1}^l D_{j}.$
\qed

\subsubsection{Proof of Corollary \ref{pt}}
According to Lemma \ref{cont}, $G_d(z)$ is continuous. Therefore, $$\lambda_0(V)=e^{\min_{z\in R(V)}G_d(z)}, \qquad \lambda_1(V)=e^{\max_{z\in R(V)}G_d(z)}$$ are well-defined. 

(1)	
Furthermore according to Lemma \ref{cont}, $G_d(z)$ can only attain its minimum in $R(V)$. Therefore, if $|\lambda|<\lambda_0(V)$,  $$G_d(z)>\min_{z\in\C} G_d(z)=\min_{z\in R(V)} G_d(z)>\log\lambda.$$ Theorem \ref{thm} thus imply that $$\sigma({H}^V_{\alpha}(\lambda, \omega))=C_{\lambda}=R(V)=[\min\limits_{\theta\in\mathbb{T}^d}v(\theta),\max\limits_{\theta\in\mathbb{T}^d}v(\theta)].$$ 

If $\lambda=\lambda_0(V)$, Lemma \ref{cont} guarantees that $G_d(z)$ can only attain minimum on $R(V)$. As a result, 
$P_{\lambda}=\{z\in R(V)\big|G_d(z) \text{ is a minimum}\}$. Furthermore, $G_d(z)>\log\lambda$ in $\mathbb{C}\backslash P_{\lambda}$, and this implies that $$C_{\lambda}\cup P_{\lambda}=R(V)=[\min\limits_{\theta\in\mathbb{T}^d}v(\theta),\max\limits_{\theta\in\mathbb{T}^d}v(\theta)].$$

(2)
If $\lambda_0(V)<\lambda$, there exist $x_1\in R(V)$, such that $G_d(x_1,0)<\log \lambda$. Since $ \frac{ \partial}{\partial y}G_d(x,y)>0$ for any $x\in \mathbb{R}$, $y>0$ by Lemma \ref{cont}, and $\lim_{z\rightarrow \infty}G_d(z)=\infty.$  Thus there exists $y>0$, such that $G_d(x_1,y)=\log \lambda$. Therefore $$P_{\lambda}\cap(\mathbb{C}\verb|\|\mathbb{R})\neq\emptyset.$$

On the other hand, if $\lambda\leq\lambda_1(V)$, there exist $x_2\in R(V)$, such that $G_d(x_2,0)\geq \log\lambda$. Theorem \ref{thm} imply that $$\sigma({H}^V_{\alpha}(\lambda, \omega)) \cap\R\neq\emptyset.$$
Consequently, $\sigma(H^V_{\alpha}(\lambda,\omega)) $ has both  complex spectrum and real spectrum. 

(3) If $\lambda>\lambda_1(V)$, $G_d(z)<\log\lambda$ in $R(V)$ for $
|\lambda|>\lambda_1$. Therefore $M_0:=\{(x,y)\in\mathbb{R}^2 : G_d(x,y)-\log\lambda=0\}=\{(x,y)\in\mathbb{R}^2 \backslash R(V) : G_d(x,y)-\log\lambda=0\}$. 
By Lemma $\ref{cont}$, we have 
$\nabla G_d(x,y)=(\frac{\partial}{\partial x}G_d(x,y),\frac{\partial}{\partial y}G_d(x,y))\neq0$ for any $(x,y)\in\mathbb{R}^2\backslash R(V)$. Therefore $0$ is a regular value by Definition \ref{pos2}. Since $G_d(x,y)-\log\lambda$ is harmonic in $\mathbb{R}^2\backslash R(V)$ by Lemma \ref{subharmonic}, it is automatic real analytic. Hence, we have 
$$\sigma({H}^V_{\alpha}(\lambda, \omega))=P_{\lambda}=M_0$$
is a real analytic curve by Theorem \ref{pos1}.\qed

\subsubsection{Proof of Corollary \ref{twod}:}
	By Jensen's formula,

\begin{align}\label{jen2}
	G_1(z)&=\int_{\T}\log|z-e^{-g}e^{2 \pi i\theta}-e^{g}e^{-2\pi i\theta}|\mathrm{d}\theta  \notag \\&=\int_{\T}\log|ze^{2 \pi i\theta}-e^{g}-e^{-g}e^{4\pi i\theta}|\mathrm{d}\theta \notag\\
	&=\log e^{\Re g}-\min\{\log|\mathbf{z}_1|,0\}-\min\{\log|\mathbf{z}_2|,0\},		
\end{align}
where $z_1$, $z_2$ are roots of $e^{-g}\mathbf{z}^2-z\mathbf{z} +e^{g}=0$.
For any $z\in\mathbb{C}$, we can write $$z=\eta e^{-g}e^{2\pi i\theta}+\frac{1}{\eta} e^{g}e^{-2\pi i\theta},\eta\in(0,e^{\Re g}],\theta\in[0,1).$$
We can compute $\mathbf{z}_1=\eta e^{2\pi i\theta}$, $\mathbf{z}_2=\frac{1}{\eta}e^ge^{2\pi i\theta}$ and $|\mathbf{z}_2|=\frac{e^{\Re g}}{\eta}\leq 1$. By \eqref{jen2}, we have
\begin{equation}\label{jen3}
	G_1(z)=
	\begin{cases}
		\log e^{\Re g},& \eta\in[1,{e^{\Re g}}],\\
		\log e^{\Re g}-\log\eta, & \eta<1.\\
	\end{cases}
\end{equation}
Then Corollary \ref{twod} is a consequence of  Theorem \ref{thm}.\qed

\subsubsection{Proof of Corollary \ref{twod1}:}

	We need the following basic observation:

\begin{lemma}\label{App4} 
	If $V(\theta)=\cos2\pi\theta_1+e^{-g}e^{2 \pi i\theta_2}+e^{g}e^{-2\pi i\theta_2}$, 
	$G_2(z)=\int_{\mathbb{T}^2} \log |z-V(\theta)|\mathrm{d}\theta,$ then  we have
	\begin{enumerate}
		\item\label{g11} $G_2(z)\equiv g$ in $\overline{\mathrm{Conv}(R(V))\backslash R(V)}$;
		\item\label{g22} $G_2(z)>g$ in $\C\backslash \overline{\{{\mathrm{Conv}}(R(V))\backslash R(V)\}}$;
		\item\label{g33} $\nabla G_2(z)\neq 0$ for $|z|> 2\|V\|_{\infty}$.
	\end{enumerate}	
\end{lemma}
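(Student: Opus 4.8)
The plan is to integrate out $\theta_2$ first, reducing everything to the one‑dimensional elliptic potential already analysed in the proof of Corollary~\ref{twod}. Write $V(\theta)=\cos 2\pi\theta_1+W(\theta_2)$ with $W(\theta_2)=e^{-g}e^{2\pi i\theta_2}+e^{g}e^{-2\pi i\theta_2}$. Since $g>0$, the range $\mathcal{E}:=R(W)$ is a nondegenerate ellipse (semi‑axes $2\cosh g$ and $2\sinh g$), and I write $\overline{\mathcal{E}}:=\mathrm{Conv}(\mathcal{E})$ for the closed elliptical disc it bounds, so that $\mathcal{E}=\partial\overline{\mathcal{E}}$; then $R(V)=[-1,1]+\mathcal{E}$ and $\mathrm{Conv}(R(V))=[-1,1]+\overline{\mathcal{E}}$. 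Set $F(w):=\int_{\T}\log|w-W(\theta_2)|\,\mathrm{d}\theta_2$, so that by Fubini $G_2(z)=\int_{\T}F(z-\cos 2\pi\theta_1)\,\mathrm{d}\theta_1$.

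The first step is the one‑variable fact: $F(w)\ge g$ for all $w\in\C$, with equality iff $w\in\overline{\mathcal{E}}$. The Jensen‑type identity \eqref{jen2} (with $\Re g=g$) gives $F(w)=g-\min\{\log|\zeta_1|,0\}-\min\{\log|\zeta_2|,0\}$, where $\zeta_1,\zeta_2$ are the roots of $e^{-g}\zeta^2-w\zeta+e^{g}=0$; since $\zeta_1\zeta_2=e^{2g}$ the root of larger modulus has modulus $\ge e^g\ge 1$ and contributes $0$, so $F(w)=g-\min\{\log|\zeta|,0\}\ge g$ where $\zeta$ is the root of least modulus ($|\zeta|\le e^g$), with equality iff $|\zeta|\ge 1$. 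Writing $w=e^{-g}\zeta+e^{g}\zeta^{-1}$ as in the proof of Corollary~\ref{twod}, the circle $|\zeta|=r$ is mapped onto the confocal ellipse with foci $\pm2$ and real semi‑axis $e^{-g}r+e^{g}r^{-1}$, which decreases monotonically from $2\cosh g$ at $r=1$ (the curve $\mathcal{E}$) to $2$ at $r=e^{g}$ (the focal segment $[-2,2]$); these ellipses foliate $\overline{\mathcal{E}}$, so $|\zeta|\ge1\iff w\in\overline{\mathcal{E}}$.

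For items (1)--(2): since $\cos 2\pi\theta_1$ ranges over $[-1,1]$ and $\theta_1\mapsto F(z-\cos 2\pi\theta_1)$ is continuous, $G_2(z)=\int_{\T}F(z-\cos 2\pi\theta_1)\,\mathrm{d}\theta_1\ge g$, with equality iff $z-t\in\overline{\mathcal{E}}$ for every $t\in[-1,1]$, i.e. iff the horizontal segment $[z-1,z+1]\subset\overline{\mathcal{E}}$, i.e. (by convexity of $\overline{\mathcal{E}}$) iff $z\in K:=(\overline{\mathcal{E}}+1)\cap(\overline{\mathcal{E}}-1)$. It then suffices to identify $K$ with $\overline{D}$, $D:=\mathrm{Conv}(R(V))\setminus R(V)$. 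If $z\in D$, the segment $z-[-1,1]$ is connected, meets $\overline{\mathcal{E}}$ (as $z\in[-1,1]+\overline{\mathcal{E}}$) and avoids $\mathcal{E}=\partial\overline{\mathcal{E}}$ (as $z\notin[-1,1]+\mathcal{E}$), hence lies in $\mathrm{int}\,\overline{\mathcal{E}}$; conversely $[z-1,z+1]\subset\mathrm{int}\,\overline{\mathcal{E}}$ forces $z\in D$. Since $\pm1\in\mathrm{int}\,\overline{\mathcal{E}}$ (because $2\cosh g>2>1$), $K$ is a convex body with $\mathrm{int}\,K=(\mathrm{int}\,\overline{\mathcal{E}}+1)\cap(\mathrm{int}\,\overline{\mathcal{E}}-1)=D$, whence $K=\overline{\mathrm{int}\,K}=\overline{D}$. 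Therefore $\{\,G_2=g\,\}=\overline{D}$, which is exactly (2) (together with $G_2\ge g$); (1) follows as well, and can also be seen directly since on $D$ every $z-\cos 2\pi\theta_1$ lies in $\overline{\mathcal{E}}$ so $G_2\equiv g$ there, and continuity of $G_2$ (Lemma~\ref{cont}) extends this to $\overline{D}$.

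Item (3) is a general estimate, independent of the special form of $V$. On $\{\,|z|>\|V\|_\infty\,\}$ we have $R(V)\subset\{\,|w|\le\|V\|_\infty\,\}$, so $G_2$ is harmonic there by Lemma~\ref{subharmonic}, and $\nabla G_2(z)=\overline{h(z)}$ where $h(z):=\partial_xG_2-i\partial_yG_2=\int_{\T^2}(z-V(\theta))^{-1}\,\mathrm{d}\theta$ is holomorphic. Expanding $z\,h(z)=\sum_{k\ge0}z^{-k}\int_{\T^2}V(\theta)^k\,\mathrm{d}\theta$ and bounding $|\int_{\T^2}V^k\,\mathrm{d}\theta|\le\|V\|_\infty^{k}$, we get $|z\,h(z)-1|\le\sum_{k\ge1}(\|V\|_\infty/|z|)^{k}<1$ once $|z|>2\|V\|_\infty$, hence $h(z)\ne0$ and $\nabla G_2(z)\ne0$. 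The main obstacle is the bookkeeping in (1)--(2): matching the ``hole'' $\overline{\mathrm{Conv}(R(V))\setminus R(V)}$ with the eroded disc $K$ and handling the interiors/closures via convexity; the one‑variable characterization of $\{F=g\}$ is essentially contained in the proof of Corollary~\ref{twod}, and (3) is routine.
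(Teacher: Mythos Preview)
Your proof is correct and, for parts (1)--(2), follows essentially the same path as the paper: write $G_2(z)=\int_{\T}F(z-\cos 2\pi\theta_1)\,\mathrm d\theta_1$, invoke the explicit level-set description of $F$ coming from the Hatano--Nelson computation in Corollary~\ref{twod}, and identify $\overline{\mathrm{Conv}(R(V))\setminus R(V)}$ with the intersection of two translated closed elliptical discs (the paper writes this as $(A+2)\cap(A-2)$ without further justification; your convexity argument for $\overline D=K$ fills in exactly that step).

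For part (3) you take a genuinely different route. The paper argues elementarily: if $|z|>2\|V\|_\infty$ then either $|\Re z|>\|V\|_\infty$ or $|\Im z|>\|V\|_\infty$, so one of $\Re z-\Re V(\theta)$, $\Im z-\Im V(\theta)$ has constant sign over $\T^2$ and the corresponding partial derivative $\int_{\T^2}\frac{\Re z-\Re V}{|z-V|^2}$ or $\int_{\T^2}\frac{\Im z-\Im V}{|z-V|^2}$ is nonzero. You instead package $\nabla G_2=\overline{h}$ with $h(z)=\int_{\T^2}(z-V)^{-1}\,\mathrm d\theta$ holomorphic, expand $zh(z)$ in a geometric series and bound $|zh(z)-1|<1$. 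Both are short and correct; the paper's sign argument is more elementary and uses nothing beyond calculus, while your Laurent estimate is coordinate-free, yields a quantitative lower bound $|\nabla G_2(z)|\ge(2|z|)^{-1}$ on $\{|z|>2\|V\|_\infty\}$, and does not require splitting into real and imaginary parts.
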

\begin{proof}

	We have
		\begin{align}\label{jen4}
		G_2(z)&=\int_{\T^2}\log|z-2\cos2\pi\theta_1-e^{-g}e^{2 \pi i\theta_2}-e^{g}e^{-2\pi i\theta_2}|\mathrm{d}\theta_1\mathrm{d}\theta_2  \notag \\&=\int_{\T}G_1(z-2\cos2\pi\theta_1)\mathrm{d}\theta_1,	
	\end{align}
		where $G_1(z)=\int_{\T}\log|z-e^{-g}e^{2 \pi i\theta_2}-e^{g}e^{-2\pi i\theta_2}|\mathrm{d}\theta.$

	\eqref{g11} We denote $$A:=\{\eta e^{-g}e^{2\pi i\theta}+\frac{1}{\eta} e^{g}e^{-2\pi i\theta}|\eta\in[1,{e^{g}}],\theta\in[0,1]\},$$
		i.e. the convex hull of elliptic $\{ e^{-g}e^{2\pi i\theta}+ e^{g}e^{-2\pi i\theta}|\theta\in[0,1]\}.$ 
	It is  easy to check that 
	\begin{equation}\label{obs}
	\overline{\mathrm{Conv}(R(V))\backslash R(V)}=(A+2)\cap(A-2).
	\end{equation}
which implies that 
$$ \overline{\mathrm{Conv}(R(V))\backslash R(V)}+[-2,2]\subset A.$$
			Once we have this,  it follows  from \eqref{jen3} 
			that $G_1(z-2\cos2\pi\theta_1)=g$ for all $\theta_1\in\T$ and any $z\in	\overline{\mathrm{Conv}(R(V))\backslash R(V)}$. 
	Hence we have $G_2(z)\equiv g$ in $\overline{\mathrm{Conv}(R(V))\backslash R(V)}$ by \eqref{jen4}.

		\eqref{g22}
		If $z\in \C\backslash \overline{\{{\mathrm{Conv}}(R(V))\backslash R(V)\}} $, 
		then as  another consequence of \eqref{obs}, we have
	   $$B:=\{\theta|z-2\cos2\pi\theta\notin A\}\neq\emptyset.$$
It follows that	\begin{align*}
			G_2(z)&=\int_{B}G_1(z-2\cos2\pi\theta_1)\mathrm{d}\theta_1+\int_{B^c}G_1(z-2\cos2\pi\theta_1)\mathrm{d}\theta_1\\
			&= \int_{B}G_1(z-2\cos2\pi\theta_1)\mathrm{d}\theta_1+\int_{B^c}g\mathrm{d}\theta_1\\
			&> \text{mes}(B)g+\text{mes}(B^c)g\\
			&=g,
		\end{align*}
	where the third inequality we use \eqref{jen3}.
	
			\eqref{g33}	If $|z|> 2\|V\|_{\infty}$, 
	suppose $|\Im z|\leq\|V\|_{\infty}$, then $|\Re z|>\|V\|_{\infty}$ by our assumption.
	Therefore,
	 $$|\Re z-\Re V(\theta)|\geq |\Re z|-|\Re V(\theta)|\geq|\Re z|-\|V\|_{\infty}>0.$$
	It follows that
	$$\nabla G_2(z)=\big(\int_{\mathbb{T}^2}\frac{\Re z-\Re V(\theta)}{|z-V(\theta)|^2}\mathrm{d}\theta,\int_{\mathbb{T}^2}\frac{\Im z-\Im V(\theta)}{|z-V(\theta)|^2}\mathrm{d}\theta\big)\neq 0.$$
		If $|\Im z|>\|V\|_{\infty}$, we  have
		$$|\Im z-\Im V(\theta)|\geq |\Im z|-|\Im V(\theta)|\geq|\Im z|-\|V\|_{\infty}>0.$$
		It follows that
		 	$$\nabla G_2(z)\neq 0.$$
\end{proof}

	Once we have this, we can finish the  proof.
\\(1) If $\lambda<e^g$, according to Lemma \ref{App4},
	$$G_2(z)>\min_{z\in\C} G_2(z)=g>\log\lambda.$$ Theorem \ref{thm} thus imply that $\sigma({H}_{\alpha}(\lambda, \omega))=R(V)=E.$
\\(2)	If $\lambda=e^g$, Lemma \ref{App4} and
	Theorem \ref{thm}  imply that $\sigma({H}_{\alpha}(\lambda, \omega))=\mathrm{Conv}(E).$\\
(3)	If $\lambda>e^{\max_{|z|\leq 2\|V\|}G_2(z)}$,
$$\max_{z\in R(V)}G_2(z)<\max_{|z|\leq 2\|V\|}G_2(z)<\log\lambda.$$
  Therefore, $$\sigma({H}_{\alpha}(\lambda, \omega))=P_\lambda=\{z|G_2(z)=\log\lambda\}=\{|z|>2\|V\|_{\infty}\big|G_2(z)=\log\lambda\}.$$
By Lemma $\ref{App4}$, we have 
$\nabla G_2(z)\neq0$ for any $|z|>2\|V\|_{\infty}$. Therefore $0$ is a regular value by Definition \ref{pos2}. Since $G_2(z)-\log\lambda$ is harmonic in $\mathbb{C}\backslash R(V)$ by Lemma \ref{subharmonic}, it is automatic real analytic. Hence, we have $\sigma({H}_{\alpha}(\lambda, \omega))$
	is a real analytic curve by Theorem \ref{pos1}.
	
\qed

\section{Proofs for the explicit examples}

To give the proof of explicit examples, the key is to explicitly calculate or estimate  $G_d(z)$.

\begin{proof}[Proof of Example \ref{app2}:]
	
	By Jensen's formula, one can compute 
	\begin{equation}\label{jensen}
		G_1(z)=\int_{\mathbb{T}}\log|z- e^{2\pi i\theta}|\mathrm{d}\theta=
		\begin{cases}
			0,& |z|<1,\\
			\log|z|, & |z|\geq1.\\
		\end{cases}
	\end{equation}
	Then the result follows from Theorem \ref{thm}.
\end{proof}

\begin{proof}[Proof of Example \ref{app3}:]
	We need the following basic observation:
	
		\begin{lemma}\label{App} 
		If $V(\theta)=\sum\limits_{j=1}^{d}e^{2\pi i\theta_j}$ with $d\geq1$, 
		$G_d(z)=\int_{\mathbb{T}^d} \log |z-\sum_{j=1}^d e^{2 \pi i \theta_j}|\mathrm{d}\theta,$ then
		
		\begin{enumerate}
			\item\label{g1} $G_d(z) $ is a radial and   monotone increasing convex function w.r.t. $|z|$;
			\item\label{g2} {$G_d(z)$ is strictly monotone increasing w.r.t. $|z|$ in $|z|\geq1$; }
			\item\label{g3} $G_d(z)=\log|z|$ for $|z|\geq d$.
		\end{enumerate}	
	\end{lemma}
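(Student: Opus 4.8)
The plan is to reduce all three assertions to properties of the single–variable function $g_d(r):=G_d(z)$ for $|z|=r$, and then to combine the potential theory of $G_d$ recorded in Lemma \ref{subharmonic} with an induction on $d$ built on Jensen's formula
\[
G_1(z)=\int_{\T}\log|z-e^{2\pi i\theta}|\,\mathrm{d}\theta=\log^+|z|
\]
together with the peeling identity $G_d(z)=\int_{\T}G_{d-1}(z-e^{2\pi i\theta})\,\mathrm{d}\theta$ for $d\ge 2$, which follows from Fubini (legitimate since $\log|z-V(\theta)|\in L^1$). Radiality is immediate: the substitution $\theta_j\mapsto\theta_j+\phi$ gives $G_d(e^{2\pi i\phi}z)=G_d(z)$, so $g_d$ is well defined; it is continuous, and in particular $g_d(0)$ is finite, by Lemma \ref{cont}.

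First I would prove \eqref{g3} by induction on $d$. For $d=1$ it is Jensen's formula, since $|z|\ge 1$. For the step, if $|z|\ge d$ then $|z-e^{2\pi i\theta}|\ge|z|-1\ge d-1$ for every $\theta$, so the inductive hypothesis gives $G_{d-1}(z-e^{2\pi i\theta})=\log|z-e^{2\pi i\theta}|$; integrating in $\theta$ and applying Jensen once more yields $G_d(z)=\log^+|z|=\log|z|$ because $|z|\ge d\ge 1$.

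For \eqref{g1} and \eqref{g2} I would argue as follows. By Lemma \ref{subharmonic}, $G_d$ is subharmonic on $\C$; being radial it coincides with its own circular maximum, so $t\mapsto g_d(e^{t})$ is convex on $\R$ (the Hadamard three-circles convexity of the circular maximum of a subharmonic function); this is the convexity meant in \eqref{g1}, namely convexity in $\log|z|$, and I would note that literal convexity in $|z|$ is claimed (and used) only on the disc $\{|z|\le d\}$, where it can if necessary be read off from the explicit radial density of the Riesz measure $V_*(\mathrm{Leb}_{\T^d})$. A convex function of $t$ with a finite limit as $t\to-\infty$ — here $g_d(e^{t})\to g_d(0)$ — is non-decreasing, hence $g_d$ is non-decreasing in $r$, which is the monotonicity in \eqref{g1}. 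For the strictness in \eqref{g2} I would use that the Riesz measure $\tfrac1{2\pi}\Delta G_d=V_*(\mathrm{Leb}_{\T^d})$ has support $\overline{R(V)}$, which for $d\ge 2$ is the closed disc of radius $d$ (the range is compact, connected, rotation-invariant and contains both $0$ and points of modulus $d$): if $g_d$ were constant on an interval $[r_1,r_2]$ with $1\le r_1<r_2$, then \eqref{g3} forces $r_2\le d$ (otherwise $g_d=\log$ there), and $G_d$ would be harmonic on the non-empty open annulus $\{r_1<|z|<r_2\}$, which lies inside that disc, so the Riesz measure would vanish on a non-empty open subset of its own support, a contradiction; thus $g_d$ is strictly increasing for $r\ge 1$ (and in fact on all of $(0,\infty)$ when $d\ge 2$, which is what the application in Example \ref{app3} needs for $G_d^{-1}$ to be defined). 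A more hands-on proof of \eqref{g2} instead runs through the peeling identity, using that for $r\ge 1$ each map $r\mapsto|r-e^{2\pi i\theta}|$ is non-decreasing (the $r$-derivative of its square is $2(r-\cos2\pi\theta)\ge 0$) and strictly increasing for $\theta\not\equiv 0$, combined with the inductive strict monotonicity of $g_{d-1}$.

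The hard part will be the transition region $1\le|z|\le d$, where the Riesz mass is genuinely spread over the disc $R(V)$: outside it ($|z|\ge d$) $G_d$ is the explicit logarithm, whereas near it the naive differentiation estimates from the peeling identity degenerate at $|z|=1$, where $r-\cos2\pi\theta$ can vanish. Proving strict monotonicity there is exactly what forces the input about $\operatorname{supp}\bigl(V_*(\mathrm{Leb})\bigr)$, equivalently a careful inductive bookkeeping through the peeling identity. A secondary point that must be stated carefully is the meaning of ``convex w.r.t.\ $|z|$'' in \eqref{g1}: subharmonicity directly gives convexity in $\log|z|$, and that is the form actually invoked downstream.
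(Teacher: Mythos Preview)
Your proposal is correct, and for parts \eqref{g1} and \eqref{g3} it matches the paper almost verbatim: radial subharmonicity plus the standard convexity of circular means (the paper simply cites Ransford) for \eqref{g1}, and the same peeling induction on $d$ via Jensen for \eqref{g3}.

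For \eqref{g2} you and the paper diverge. The paper runs the induction directly through the peeling identity: writing $G_{d+1}(r)=\int_\T G_d\bigl(|r-e^{2\pi i\theta}|\bigr)\,\mathrm{d}\theta$, it uses that $r\mapsto r^2+1-2r\cos2\pi\theta$ is increasing on $[1,\infty)$ for every $\theta$, invokes the weak monotonicity of $G_d$ from \eqref{g1} globally, and then extracts strict inequality from the inductive hypothesis on a neighborhood of $\theta=\tfrac14$, where $|r-e^{2\pi i\theta}|>1$ is guaranteed. This is precisely your ``more hands-on'' alternative. Your primary route via the Riesz measure $V_*(\mathrm{Leb}_{\T^d})$ is different and arguably cleaner: for $d\ge2$ it bypasses the induction entirely by identifying the support of $\tfrac{1}{2\pi}\Delta G_d$ with the full disc $\{|z|\le d\}$ and noting that a radial subharmonic function constant on an annulus would be harmonic there, forcing the Riesz measure to vanish on an open subset of its own support. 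This also yields transparently the stronger statement --- strict monotonicity on all of $(0,d]$, not just $[1,d]$ --- that the application in Example~\ref{app3} actually requires for $G_d^{-1}$ to be well defined, a point the paper's formulation of \eqref{g2} leaves slightly short. Your remark that the convexity in \eqref{g1} is really convexity in $\log|z|$ (witness $G_1(r)=\log^+r$, which is not convex in $r$) is a correct sharpening of the paper's phrasing; downstream only monotonicity is genuinely used.
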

	\begin{proof}
		
		\eqref{g1}	
		It is obvious that	$G_d(z)$ is a radial function w.r.t. $|z|$, we can combine this with the fact that $G_d(z)$ is a subharmonic function on $\C$  by Lemma \ref{subharmonic}. Consequently, 
		$G_d(z)$ is increasing and convex w.r.t. $|z|$ \cite{Ransford}.
		
		\eqref{g2}	We prove this statement by induction.	If $d=1$, \eqref{jensen} implies that $G_1(|z_1|)>G_1(|z_2|)$ for any $|z_1|>|z_2|\geq1$. If $d>1$, we assume that  $G_d(|z_1|)>G_d(|z_2|)$ for any $|z_1|>|z_2|\geq1$. Then for any $|z_1|>|z_2|\geq 1$, 
		
		\begin{align*}
			G_{d+1}(|z_1|)&=\int_{\mathbb{T}^{d+1}}\log\big||z_1|-\sum_{j=1}^{d+1} e^{2\pi i\theta_{j}}\big|\mathrm{d}\theta \\
			&=\int_\mathbb{T}G_d(|z_1|-e^{2\pi i\theta_{d+1}})) \mathrm{d}\theta_{d+1}\\
			&=\frac{1}{2}\int_\mathbb{T}G_d\bigg(|z_1|^2+1-2|z_1|\cos2\pi \theta_{d+1}\bigg) \mathrm{d}\theta_{d+1},
		\end{align*}
		where the third equality we use $G_d(z)$ is a radial function by Lemma \ref{App}\eqref{g1}.
		The function $f_\theta(x) = x^2 + 1 - 2x\cos2\pi\theta$ is strictly monotone increasing in  $[1,\infty)$. Therefore, whenever $|z_1|>|z_2| \geq 1$, we can conclude that $f_\theta(|z_1|)>f_\theta(|z_2|)$. By Lemma \ref{App}\eqref{g1}, we can see that  $G_d(\cdot)$ is monotone increasing function w.r.t. $|z|$. This implies that $$G_d(f_\theta(|z_1|))\geq G_d(f_\theta(|z_2|)),$$ i.e. 
		\begin{equation}\label{mo1}G_d\bigg(|z_1|^2+1-2|z_1|\cos2\pi \theta\bigg)\geq G_d\bigg(|z_2|^2+1-2|z_2|\cos2\pi \theta\bigg), \qquad \forall \theta \in\T. \end{equation}
		
		In particular, $$f_\frac{1}{4}(|z_1|)>f_\frac{1}{4}(|z_2|)\geq f_\frac{1}{4}(1)=2.$$	
		Therefore, their exists $\delta>0$, such that $f_\theta(|z_1|)>f_\theta(|z_2|)>1$ in $\theta\in(\frac{1}{4}-\delta,\frac{1}{4}+\delta)$. 	
		Our induction assumption implies that $G_d(\cdot)$ is strict monotone increasing function w.r.t. $|z|$ in $[1,\infty)$. Therefore, we have
		$$G_d(f_\theta(|z_1|))> G_d(f_\theta(|z_2|)),$$ and it follows that 
		\begin{equation}\label{mo2} G_d\bigg(|z_1|^2+1-2|z_1|\cos2\pi \theta\bigg)> G_d\bigg(|z_2|^2+1-2|z_2|\cos2\pi \theta\bigg), \qquad \theta\in  (\frac{1}{4}-\delta,\frac{1}{4}+\delta). 
		\end{equation} 	
		
		By \eqref{mo1} and \eqref{mo2}, we have 
		\begin{align*}
			G_{d+1}(|z_1|)
			&=\frac{1}{2}\int_{\frac{1}{4}-\delta}^{\frac{1}{4}+\delta}G_d\bigg(|z_1|^2+1-2|z_1|\cos2\pi \theta_{d+1}\bigg) \mathrm{d}\theta_{d+1}\\
			&+\frac{1}{2}\bigg(\int_{0}^{\frac{1}{4}-\delta}+\int_{\frac{1}{4}+\delta}^1\bigg)G_d\bigg(|z_1|^2+1-2|z_1|\cos2\pi \theta_{d+1}\bigg) \mathrm{d}\theta_{d+1}\\
			&>\frac{1}{2}\int_\mathbb{T}(G_d\bigg(|z_2|^2+1-2|z_2|\cos2\pi \theta_{d+1}\bigg) \mathrm{d}\theta_{d+1}\\
			&=G_{d+1}(|z_2|).
		\end{align*}
		i.e., $G_{d+1}(\cdot)$ is strict monotone increasing function w.r.t. $|z|$ in $[1,\infty)$.
		
		\eqref{g3} 
		Since $|z|\geq d$, we have $\bigg|z-\sum_{j=m}^{d} e^{2\pi i\theta_{j}}\bigg|\geq 1$ for any $2\leq m\leq d$. Therefore $$G_1(z-\sum_{j=m}^{d} e^{2\pi i\theta_{j}})=\log\big|z-\sum_{j=m}^{d} e^{2\pi i\theta_{j}}\big|$$ by \eqref{jensen}.
		It follows that
		\begin{eqnarray*}
			{G_d(z)}=\int_{\mathbb{T}^{d}}\log\big|z-\sum_{j=1}^{d} e^{2\pi i\theta_{j}}\big| \mathrm{d}\theta_{1}\cdots\theta_{d} 
			&=&\int_{\mathbb{T}^{d-1}}G_1(z-\sum_{j=2}^{d} e^{2\pi i\theta_{j}}) \mathrm{d}\theta_{2}\cdots\theta_{d}\\
			&=&\int_{\mathbb{T}^{d-1}}\log\big|z-\sum_{j=2}^{d} e^{2\pi i\theta_{j}}\big| \mathrm{d}\theta_{2}\cdots\theta_{d}.
		\end{eqnarray*}	
		By induction, we have 
		$$G_d(z)=\int_{\mathbb{T}}\log\big|z-e^{2\pi i\theta_{d}}\big| \mathrm{d}\theta_{d}=\log|z|.
		$$

	\end{proof}
	
	Once we have this, we can finish the  whole proof.\\
	(1) Based on Lemma \ref{App}\eqref{g1}, we know that $G_d(z)$ is   monotone increasing w.r.t. $|z|$. Therefore, we can conclude that  $\min_{z\in\C}G_d(z)=G_d(0)$. 
	
	Hence, if $|\lambda|<e^{G_d(0)}$,  $$G_d(z)>\min_{z\in\C} G_d(z)=G_d(0)>\log\lambda.$$ Theorem \ref{thm} thus imply that $$\sigma({H}_{\alpha}(\lambda, \omega))=C_{\lambda}=R(V)=\{z\in\mathbb{C}\big||z|\leq d\}.$$ 
	
	If $\lambda=e^{G_d(0)}$, Lemma \ref{App} guarantees that $G_d(z)$ can only attain minimum on $\{z\in\mathbb{C}\big||z|\leq 1\}$. As a result, 
	$P_{\lambda}=\{z\in R(V)\big|G_d(z) \text{ is a minimum}\}$. Furthermore, $G_d(z)>\log\lambda$ in $\mathbb{C}\backslash P_{\lambda}$ by Lemma \ref{App}\eqref{g2}, and this implies that $$C_{\lambda}\cup P_{\lambda}=R(V)=\{z\in\mathbb{C}\big||z|\leq d\}.$$ \\
		(2)
	We observe that if $\lambda>e^{G_d(0)}$,  their exists unique $r>0$ such that $G_d(r)=\log\lambda$ since $G_d(|z|)$ is a monotone increasing convex function  by Lemma \ref{App}\eqref{g1}. Hence $G_d^{-1}(\cdot)$ is well defined in $(e^{G_d(0)},\infty)$. 
	As a result, 
	$$P_{\lambda}=\{z\in\mathbb{C}\big|G_d(z)=\log\lambda\}=\{z\in\mathbb{C}\big||z|=G_d^{-1}(\log\lambda)\}.$$
	If $|z|<G_d^{-1}(\log\lambda)$, $G_d(z)<\log\lambda$  by Lemma \ref{App}\eqref{g1}.
	If $|z|>G_d^{-1}(\log\lambda)$, $G_d(z)>\log\lambda$ also by Lemma \ref{App}\eqref{g1}.
	Therefore, if $e^{G_d(0)}<\lambda<e^{G_d(d)}=d$,
	$$C_{\lambda}=\{z\in\mathbb{C}\big |G_d^{-1}(\log\lambda)<|z|\leq d\}.$$
	Theorem \ref{thm} thus imply that $$\sigma({H}_{\alpha}(\lambda, \omega))=P_{\lambda}\cup C_{\lambda}=\{z\in\mathbb{C}\big||G_d^{-1}(\log\lambda)\leq|z|\leq d\}.$$\\
	(3) As established by the  Lemma \ref{App}\eqref{g1}, $G_d(z)$ is a radial and monotone increasing function with respect to $|z|$. As a result, according to  Lemma \ref{App}\eqref{g3}, $\max_{z\in R(V)}G_d(z)=G_d(d)=\log d$.

	If $\lambda\geq d$, $G_d(z)<\log d\leq\log\lambda$ in $ \{z\in\mathbb{C}\big||z|<d\}$  by Lemma \ref{App}\eqref{g2}. And $G_d(z)=\log |z|$ in $ \{z\in\mathbb{C}\big||z|\geq d\}$  by Lemma \ref{App}\eqref{g3}.  Therefore $$\sigma({H}_{\alpha}(\lambda, \omega))=P_{\lambda}=\{z\in\mathbb{C}\big||z|= \lambda\}.$$

\end{proof}

\begin{proof}[Proof of Example \ref{two6}:] In this case, one can check that $V(\theta)=e^{\sum_{j=1}^{d}2\pi i\theta_j}$, then
	using variable substitution, we have
	\begin{equation*}
		G_d(z)=\int_{\mathbb{T}^d}\log|z-e^{\sum_{j=1}^{d}2\pi i\theta_j}|\mathrm{d}\theta=	\int_{\mathbb{T}}\log|z-e^{2\pi i t}|\mathrm{d}t=
		\begin{cases}
			0, &|z|<1,\\
			\log|z|, & |z|\geq1.\\
		\end{cases}
	\end{equation*}
	Then Example \ref{two6} is a consequence of  Theorem \ref{thm}.\end{proof}

\begin{proof}[Proof of Example \ref{two5}:]
	By Jensen's formula,
	\begin{equation}\label{jen1}
		G_1(z)=\int_{\T}\log|z-2e^{4\pi i\theta}-e^{2\pi i\theta}|\mathrm{d}\theta=\log|z|-\min\{\log|\mathbf{z}_1|,0\}-\min\{\log|\mathbf{z}_2|,0\}
	\end{equation}
	for $z\neq 0$, where $\textbf{z}_1,\textbf{z}_2$ are roots of $2\mathbf{z}^2+\mathbf{z}-z=0$, i.e.  $\mathbf{z}_1=\frac{-1+\sqrt{1+8z}}{4},\mathbf{z}_2=\frac{-1-\sqrt{1+8z}}{4}$.  
	
	(1) For any $0<|z|<1$, as $|\sqrt{1+8z}|\leq3$, which implies $|\mathbf{z}_1|\leq1, |\mathbf{z}_2|\leq1$.
	On the other hand, $\textbf{z}_1\textbf{z}_2=-\frac{z}{2}$, it follows that $G_1(z)=\log 2$ by \eqref{jen1}.
	By Theorem \ref{thm}, we have $\{z\big|0<|z|<1\}\subset\sigma({H}_{\alpha}(\lambda, \theta))$. 
	{Therefore, we have $D\subset\sigma({H}_{\alpha}(\lambda, \theta))$ by applying Corollary \ref{shape}.}

	(2) If $z=2e^{4\pi i\theta}+e^{2\pi i\theta}$, we can easily compute $$\textbf{z}_1=e^{2 \pi i\theta}, \textbf{z}_2=-\frac{1}{2}-e^{2\pi i\theta}.$$
	If $|\textbf{z}_2|\leq1$,
	then by \eqref{jen1}, we obtain $$G_1(z)=\log|z|-\log|\mathbf{z}_2|=\log|1+2e^{2\pi i\theta}|-\log|\mathbf{z}_2|=\log 2.$$
	If $|\textbf{z}_2|>1$, then by the same equation
	\eqref{jen1}, we have $$G_1(z)=\log|z|=\log|1+2e^{2\pi i\theta}|>\log 2.$$
	Therefore $R(V)\subset\sigma({H}_{\alpha}(\lambda, \theta))$ by Theorem \ref{thm}.
	
	(3) Since  $G_1(z)$ is harmonic in $\mathbb{C}\backslash R(V)$ according to  Lemma \ref{subharmonic}, by minimum principle of  harmonic function, we have $G_1(z)\neq\log 2$ on $\C\backslash(R(V)\cup {D})$. Therefore $\sigma({H}_{\alpha}(\lambda, \theta))\subset {D}\cup R(V)$ by Theorem \ref{thm}. 
	
	Consequently, we have $\sigma({H}_{\alpha}(\lambda, \theta))= {D}\cup R(V).$

\end{proof}

\begin{proof}[Proof of Example \ref{non-analytic}:]
	We can easily compute $G_1(z)$ using integration by parts,
	
	\begin{align*}
		\int_{0}^{1}\log|z-V(\theta)|\mathrm{d}\theta&=	\int_{0}^{\frac{1}{2}}\log|z-\theta|\mathrm{d}\theta+\int_{\frac{1}{2}}^1\log|z-1+\theta|\mathrm{d}\theta\\
		&=2\int_{0}^{\frac{1}{2}}\log|z-\theta|\mathrm{d}\theta\\
		&=\int_{0}^{\frac{1}{2}}\log((\Re(z)-\theta)^2+\Im^2 (z))\mathrm{d}\theta\\
		&=\int_{\Re(z)}^{\Re(z)-\frac{1}{2}}\log(\theta^2+\Im^2 (z))\mathrm{d}\theta\\	
		&=\theta\log(\theta^2+\Im^2 (z))\bigg|_{\Re(z)}^{\Re(z)-\frac{1}{2}}
		-\int_{\Re(z)}^{\Re(z)-\frac{1}{2}}\frac{2\theta^2}{\theta^2+\Im^2 (z)}\mathrm{d}\theta\\
		&=\theta\log(\theta^2+\Im^2 (z))\bigg|_{\Re(z)}^{\Re(z)-\frac{1}{2}}
		+\int_{\Re(z)}^{\Re(z)+\frac{1}{2}}\frac{2\Im^2 (z)}{\theta^2+\Im^2 (z)}\mathrm{d}\theta-1.
	\end{align*}
	Therefore,

	\begin{equation*}
		\int_{0}^{1}\log|z-V(\theta)|\mathrm{d}\theta=\left\{
		\begin{aligned}
			&2\Re(z)\log(|z|)-(\Re(z)-\frac{1}{2})\log(\Re(z)-\frac{1}{2})^2,\quad \Im(z)=0,\\
			&2\Re(z)\log(|z|)-(\Re(z)-\frac{1}{2})\log((\Re(z)-\frac{1}{2})^2+\Im^2(z))+\\&2\Im(z)(\arctan(\frac{\Im(z)}{2\Im(z)^2+2\Re(z)^2-\Re(z)}))-1, \quad \Im(z)\neq0,\\
		\end{aligned}
		\right.
	\end{equation*}
	where we use the notion $0\cdot\log0=0$.
	Then Example \ref{non-analytic} follows from Theorem \ref{thm}.\end{proof}

	\appendix
\section{Proof of Proposition \ref{res}}
In order to prove Proposition \ref{res}, we only need to show $\widehat{H}^V_{\alpha}(\lambda, \theta)-z I$ is surjective and injective in $z \in \mathbb{C} \backslash S_{\lambda}$. 

\begin{lemma}\label{on}
	Assume $z \in \mathbb{C} \backslash S_{\lambda}$. Then the operator $\widehat{H}^V_{\alpha}(\lambda, \theta)-z I$ is surjective for all  $\theta \in \mathbb{T}^{d}$.
	
\end{lemma}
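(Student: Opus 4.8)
The plan is to factor $\widehat H-z$, where $\widehat H:=\widehat{H}^V_{\alpha}(\lambda,\theta)=\lambda S+M_V$ with $S$ the unitary shift $(Su)(n)=u(n-1)$ and $M_V$ the multiplication operator by $V(\theta+n\alpha)$, as a product of a boundedly invertible operator with a factor of the form $I$ minus a quasi-periodically weighted shift whose spectral radius is controlled by $G_d(z)$. This will in fact give invertibility of $\widehat H-z$, so surjectivity follows a fortiori. Since $\mathbb C\setminus S_\lambda=\{G_d(z)<\log\lambda\}\cup\big(\{G_d(z)>\log\lambda\}\cap(\mathbb C\setminus R(V))\big)$, I split the argument into two cases; only continuity of $V$ and the standing hypothesis $\log|z-V(\cdot)|\in L^1(\mathbb T^d)$ will be used.

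\emph{Case $G_d(z)<\log\lambda$.} Here I would write $\widehat H-z=\lambda S\,(I-\lambda^{-1}B)$ with $B:=S^{-1}(z-M_V)$, a bounded weighted shift, $\|B\|\le\max_{\theta'\in\mathbb T^d}|z-V(\theta')|$. A direct computation gives $B^k e_m=\big(\prod_{j=0}^{k-1}(z-V(\theta+(m-j)\alpha))\big)e_{m-k}$, so $\|B^k\|=\sup_{m\in\mathbb Z}\prod_{j=0}^{k-1}|z-V(\theta+(m-j)\alpha)|$ and, by Gelfand's formula together with subadditivity of $k\mapsto\log\|B^k\|$, the spectral radius is $\rho(B)=\exp\big(\lim_k\sup_{m}\tfrac1k\sum_{j=0}^{k-1}\log|z-V(\theta+(m-j)\alpha)|\big)$. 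To bound this by $e^{G_d(z)}$ I would majorize $\log|z-V(\cdot)|$ by the bounded continuous functions $g_\varepsilon:=\max(\log|z-V(\cdot)|,-\varepsilon^{-1})$, use the uniform ergodic theorem for the uniquely ergodic translation by the rationally independent $\alpha$ (so that $\sup_m\tfrac1k\sum_{j}g_\varepsilon(\theta+(m-j)\alpha)\to\int_{\mathbb T^d}g_\varepsilon\,\mathrm d\theta$), and then let $\varepsilon\to0$, invoking $\int g_\varepsilon\,\mathrm d\theta\downarrow G_d(z)$ by monotone convergence — this is exactly where $\log|z-V(\cdot)|\in L^1$ enters. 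Thus $\rho(B)\le e^{G_d(z)}<\lambda$, hence $I-\lambda^{-1}B$ is invertible by Neumann series and $\widehat H-z$ is invertible, in particular surjective.

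\emph{Case $G_d(z)>\log\lambda$.} Then $z\notin R(V)$ (otherwise $z\in C_\lambda\subset S_\lambda$), so $z-M_V$ is boundedly invertible and $\log|z-V(\cdot)|$ is continuous on $\mathbb T^d$. I would factor $\widehat H-z=-(z-M_V)(I-\lambda A)$ with $A:=(z-M_V)^{-1}S$, a bounded weighted shift with $\|A^k\|=\sup_{m}\prod_{j=1}^{k}|z-V(\theta+(m+j)\alpha)|^{-1}$; applying the uniform ergodic theorem directly to the continuous function $\log|z-V(\cdot)|$ yields $\rho(A)\le e^{-G_d(z)}<\lambda^{-1}$. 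Hence $I-\lambda A$ is invertible and so is $\widehat H-z$, in particular surjective.

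The genuinely delicate point is the sub-case $z\in R(V)$ in the first case — which does occur once $|\lambda|>e^{\min_{z\in R(V)}G_d(z)}$ — since there $\log|z-V(\cdot)|$ has logarithmic singularities along the orbit $\{\theta+n\alpha\}$, so the uniform ergodic theorem cannot be applied to it directly; circumventing this via the truncations $g_\varepsilon$ and the $L^1$ hypothesis is the only non-routine ingredient. All remaining steps are elementary operator algebra plus the classical uniform ergodic theorem for uniquely ergodic translations.
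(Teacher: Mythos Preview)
Your argument is correct and is essentially the paper's proof repackaged: the paper constructs the inverse explicitly as a series $\sum_{k}X_k$ (resp.\ $\sum_{k}Y_k$) of weighted shifts and bounds $\|X_k\|$ via the uniquely ergodic theorem (resp.\ Furman's theorem), which is exactly your Neumann series $(I-\lambda^{-1}B)^{-1}=\sum_{k}(\lambda^{-1}B)^k$ together with the Gelfand estimate $\rho(B)\le e^{G_d(z)}$ obtained through the truncations $g_\varepsilon$. One small bonus of your factorization is that it delivers invertibility (not merely surjectivity) for \emph{every} $\theta$ in both cases, whereas the paper handles injectivity in a separate lemma and, in the sub-case $z\in R(V)$ with $G_d(z)<\log\lambda$, has to invoke phase-independence of the spectrum.
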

\begin{proof}
	We distinguish the proof into two cases:\\
	\smallskip
	{\bf Case 1: $G_d(z)>\log\lambda , z \notin  R(V)$.} 
	For any $y=(\dots,y_{-1},y_0,y_1,\dots)\in \ell^{2}(\mathbb{Z})$, we
	take
	$$
	x_{n}=\sum_{m} \alpha_{n, m} y_{m},
	$$
	where
	$$
	\alpha_{n, m}= \begin{cases}-\frac{\lambda^{k}}{\prod_{s=0}^{k}(z-V(\theta+(n-s)) \alpha)}, & \text { if } n-m=k , k \in \mathbb{N}, \\ 0, & \text { otherwise. }\end{cases}
	$$
	
	It is easy to check $\widehat{H}^V_{\alpha}(\lambda, \theta) x-z x=y$. Write $X=\sum_{k \geq 0} X_{k}$, where $X_{k}$ is obtained from $X$ by replacing all entries $\alpha_{n, m}$ for which $n-m\neq k$ by zero. Each $X_{k}$ is bounded since
	\begin{equation*}
		\|X_{k}\|=\sup _{n}|\alpha_{n, n-k}|=\sup _{n} \frac{\lambda^{k}}{\prod_{s=0}^{k}(z- V(\theta+(n-s)\alpha ))}
		\leq \frac{\lambda^k}{\operatorname{dist}(z,R(V))^k} .
	\end{equation*}
	
	By uniquely ergodic theorem, the sequence $\{k^{-1} \log |\alpha_{n, n-k }|\}_{k \geq 1}$ converges uniformly in ${n}$ to $-G_d(z)+\log\lambda<0$ when $k \rightarrow \infty$. This means there exists $k_{0}>0$ such that $\left\|X_{k}\right\|=\sup _{n \in \mathbb{Z}}|\alpha_{n, n-k}| \leq e^{-(G_d(z)-\log\lambda) k / 2}$ for all $k \geq k_{0}$, which shows that $X$ is bounded.\\
	\smallskip		
	{\bf Case 2: $G_d(z)<\log\lambda$.} In this case, take
	$
	x_{n}=\sum_{m} \beta_{n, m} y_{m},
	$
	where
	$$
	\beta_{n, m}= \begin{cases}\lambda^{-1}, & \text { if } m-n=1, \\ \lambda^{-k} \prod_{s=1}^{k-1}(z-V(\theta+(n+s) \alpha), & \text { if } m-n=k , k \geq 2, \\ 0, & \text { otherwise }.\end{cases}
	$$
	Then $\widehat{H}^V_{\alpha}(\lambda, \theta) x-z x=y,$ as well. We write $Y=\sum_{k \geq 1} Y_{k}$, where $Y_{k}$ is obtained from $Y$ by replacing all entries $\beta_{n,m}$ with $m-n \neq k$ by zero. $Y_{k}$ is bounded for each $k$ since
	$$\|Y_{k}\|=\sup _{n}|\beta_{n, n+k }| \leq \frac{(|z|+ \|V\|_{\infty})^{k-1}}{\lambda^{k}}.$$

	By Furman's Theorem \cite{Fur97}, we have
	$$\lim_{k\rightarrow\infty}\sup_{\theta\in\mathbb{T}^d}\frac{1}{k}\log|\beta_{n, n+k}|=G_d(z)-\log\lambda.$$
	
	This shows that there exists $k_{0}>0$ such that for all $k \geq k_{0}$, one has $\left\|Y_{k}\right\| \leq e^{(G_d(z)-\log\lambda) k / 2}$ for $k\geq k_{0}$ and $Y$ is bounded.
\end{proof}

\begin{lemma}\label{in}
	Assume $z \in \mathbb{C} \backslash S_{\lambda}$. Then the following hold.
	\begin{enumerate}
		\item \label{01} If $z \notin R(V)$, the operator $\widehat{H}^V_{\alpha}(\lambda,\theta)- z I$ is injective for all $\theta\in\mathbb{T}^d$.
		\item\label{02} If $z \in R(V)$, there exists $\theta=\theta(z) \in \mathbb{T}^{d}$ such that $\widehat{H}^V_{\alpha}(\lambda, \theta)-$ $z I$ is injective.
	\end{enumerate}
\end{lemma}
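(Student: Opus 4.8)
The plan is to reduce injectivity of $\widehat H^V_\alpha(\lambda,\theta)-zI$ to a growth analysis of the $\ell^2$ solutions of a scalar first-order difference equation. A vector $u\in\ell^2(\mathbb Z)$ lies in the kernel precisely when
\[
\lambda\, u(n-1)+\big(V(\theta+n\alpha)-z\big)\,u(n)=0\qquad\text{for all }n\in\mathbb Z .
\]
Whenever $z-V(\theta+n\alpha)\neq0$ for every $n$, this recursion is invertible in both directions, so a kernel element is determined by $u(0)$; normalizing $u(0)=1$ one recovers the explicit solution already used in Lemma \ref{0}, namely $u(n)=\lambda^{n}\big/\prod_{j=1}^{n}(z-V(\theta+j\alpha))$ for $n\ge1$ and $u(-n)=\lambda^{-n}\prod_{j=0}^{n-1}(z-V(\theta-j\alpha))$ for $n\ge1$. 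Thus $\tfrac1n\log|u(\pm n)|$ is, up to sign, a Birkhoff average of $\log|z-V(\cdot)|$ along the rationally independent rotation, and everything comes down to deciding whether that average differs from $\log\lambda$.

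For the first assertion I would take $z\notin R(V)$, so that $\operatorname{dist}(z,R(V))>0$, the recursion is invertible for every $\theta$, and $\log|z-V(\cdot)|$ is continuous on $\mathbb T^d$. By unique ergodicity of the rotation by $\alpha$, its Birkhoff averages converge uniformly in $\theta$ to $G_d(z)$, so $\tfrac1n\log|u(n)|\to\log\lambda-G_d(z)$ and $\tfrac1n\log|u(-n)|\to G_d(z)-\log\lambda$. Since $z\notin S_\lambda$ gives in particular $z\notin P_\lambda$, i.e. $G_d(z)\neq\log\lambda$, exactly one of the two tails of $u$ grows exponentially; hence $u\notin\ell^2$ unless $u\equiv0$, and injectivity holds for all $\theta$.

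For the second assertion I would take $z\in R(V)$; then $z\notin C_\lambda$ forces $G_d(z)\le\log\lambda$ while $z\notin P_\lambda$ forces $G_d(z)\neq\log\lambda$, so $G_d(z)<\log\lambda$. The main obstacle is that $\log|z-V(\cdot)|$ is no longer continuous — it has $-\infty$ singularities on $\{\vartheta\in\mathbb T^d:V(\vartheta)=z\}$ — so unique ergodicity no longer yields a statement valid at \emph{every} phase, and for exceptional $\theta$ the orbit $\{\theta+n\alpha\}$ may actually meet $z$, destroying invertibility. The way around this, and the point at which the hypothesis $\log|z-V(\cdot)|\in L^1(\mathbb T^d)$ is used, is that this hypothesis forces $\{\vartheta:V(\vartheta)=z\}$ to be Lebesgue-null; hence the bad set $E_z:=\bigcup_{n\in\mathbb Z}\big(\{\vartheta:V(\vartheta)=z\}-n\alpha\big)$ is Lebesgue-null, while, Lebesgue being ergodic for the rotation, the Birkhoff ergodic theorem gives $\tfrac1n\sum_{j=1}^{n}\log|z-V(\theta+j\alpha)|\to G_d(z)$ for almost every $\theta$. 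Choosing any $\theta=\theta(z)$ outside $E_z$ at which this convergence holds, the recursion is again two-sided invertible and $\tfrac1n\log|u(n)|\to\log\lambda-G_d(z)>0$, so the forward tail of any nonzero kernel element blows up and $u\equiv0$; this yields the required phase. I expect this last paragraph to be where the only genuine difficulty lies: replacing the clean uniform ergodic argument of the first case by an almost-everywhere Birkhoff argument and checking, via the $L^1$ assumption, that the exceptional phases form a null set so that an admissible $\theta(z)$ still exists.
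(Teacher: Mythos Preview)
Your proposal is correct and follows essentially the same route as the paper's proof: both reduce the kernel equation to the explicit one-step recursion, invoke unique ergodicity of the rotation when $z\notin R(V)$ (so $\log|z-V(\cdot)|$ is continuous) and Birkhoff's pointwise theorem when $z\in R(V)$, and conclude that one tail of any nonzero solution grows exponentially because $G_d(z)\neq\log\lambda$. Your observation that in case~(2) one necessarily has $G_d(z)<\log\lambda$, and your explicit bookkeeping of the null bad set $E_z$ via the $L^1$ hypothesis, make the argument slightly tidier than the paper's version, which instead starts from an arbitrary index $k$ with $x_k\neq0$ and therefore requires Birkhoff convergence for all integer shifts simultaneously; but the underlying idea is identical.
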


\begin{proof}
	For $z \notin R(V)$, by unique ergodic theorem,
	\begin{equation*}\label{erg}
		\lim _{n \rightarrow \infty} \frac{1}{n} \sum_{s=0}^{n} \log |z-V(\theta+s\alpha)| 
		=\lim _{n \rightarrow-\infty} \frac{1}{|n|} \sum_{s=n}^{0} \log |z-V(\theta+s\alpha)|=G_d(z)
	\end{equation*}
	for all $\theta=(\theta_{1}, \dots, \theta_{d}) \in \mathbb{T}^{d}$.
	
	For $z \in R(V)$, we use the Birkhoff ergodic theorem and the fact that a countable intersection of full Lebesgue measure sets has full Lebesgue measure, we have for all $k \in \mathbb{Z}$ and almost all $\theta \in \mathbb{T}^{d}$ :
	\begin{equation}\label{gongshi}
		\begin{aligned}
			&\quad\lim _{n \rightarrow \infty} \frac{1}{n}  \sum_{s=0}^{n-1} \log |z-V(\theta+(k+s)\alpha)| 
			\\&=\lim _{n \rightarrow-\infty} \frac{1}{|n|} \sum_{s=n+1}^{0} \log |z-V(\theta+(k+s)\alpha)|\\&=G_d(z) .
		\end{aligned}
	\end{equation}
	Choose any $\theta$ in case (\ref{01}). In case (\ref{02}), 
	we can take $\theta$  such that \eqref{gongshi} holds for all   $k \in \mathbb{Z}$ .
	
	If $x=(\dots,x_{-1},x_0,x_1,\dots)\in \ell^{2}(\mathbb{Z})\backslash \{0\}$ satisfies $(\widehat{H}^V_{\alpha}(\lambda, \theta)-z I)x=0$, then 
	there exists $k\in\mathbb{Z}$ such that  $x_{k} \neq 0$ and
	$
	x_{k+n}=\beta_{n} x_{k}
	$
	for all $n \in \mathbb{Z}$, where
	$$
	\beta_{n}= \begin{cases}\lambda^{-n} \prod_{s=n-1}^{0}(z-V(\theta+(k+s)\alpha)), & \text { if } n \leq- 1 ,\\ \frac{\lambda^{n}}{\prod_{s=1}^{n}(z-V(\theta+(k+s)\alpha))}, & \text { if }n \geq1.\end{cases}
	$$
	By (\ref{gongshi}) and the choice of $\theta$, one gets
	$$
	\lim _{n \rightarrow \infty} \frac{1}{n} \log |\beta_{n}|=-G_d(z)+\log\lambda=-\lim _{n \rightarrow-\infty} \frac{1}{|n|} \log |\beta_{n}|.
	$$
	
	When $G_d(z)<\log\lambda$, we have $$\lim _{n \rightarrow\infty}|\beta_{n}|=\infty=\lim _{n \rightarrow\infty}|x_{k+n }|= \infty,$$ which contradicts to $x \in \ell^{2}(\mathbb{Z})$. While for $G_d(z)>\log\lambda$, we also have $$\lim _{n \rightarrow -\infty}|\beta_{n}|=\infty=\lim _{n \rightarrow -\infty}|x_{k+n}|,$$ which is again a contradiction.
\end{proof}

\begin{proof}[Proof of Proposition \ref{res}]
	It follows directly from Lemma \ref{on}, Lemma \ref{in} and Lemma  \ref{ind}. 
\end{proof}
\section{Proof of Lemma \ref{app}}
\begin{proof}
	
	We write $T=\sum_{l=-m}^{m}T_l$, where $T_l$ is obtained from $T$ by replacing all entries $t_{i,j}$ for which $i-j\neq l$ by zero. We have $||T_l||\leq M.$ Fix $\delta>0,$ by the  hypothesis, there exists $$0\neq\psi\in \ell^2(\mathbb{Z})$$
	such that $\|\psi\|=1$ and $\|T\psi\|<\delta,$ so that also $\|\sum_{l=1}^{n}T_{\pm l}\psi\|<nM\leq mM$ and $\|(T-\sum_{l=1}^{n}T_{\pm l})\psi\|<\delta+nM\leq\delta+ mM$ for $n\leq m$.
	
	Changing $\psi$ slightly if necessary, we may suppose that $\psi_j\neq0$ for all $j\in \mathbb{Z}$. For each $k$, let
	$$\psi^k=(\ldots, 0,0, \psi_{k+1}, \psi_{k+2}, \ldots, \psi_{k+N}, 0,0, \ldots) \in \ell^2(\mathbb{Z}).$$
	Let us show that, for a suitable choice of $\delta$, for some $k$, $\|T\psi^k\|\leq(\frac{C}{\sqrt{N}}+\varepsilon)\|\psi^k\|.$
	If not, then we have $\|T\psi^k\|>(\frac{C}{\sqrt{N}}+\varepsilon)\|\psi^k\|$ for every $k$,  that is
	$$\begin{aligned}
		\sum_{j=k+1-m}^{k+m}|\sum_{h=k+1-j}^{m}t_{j,j+h}\psi_{j+h}|^2+\sum_{j=k+m+1}^{k+N-m}|\sum_{h=-m}^{m}t_{j,j+h}\psi_{j+h}|^2&\\+\sum_{j=k+N-m+1}^{k+N+m}|\sum_{h=j-(K+N)}^{m}t_{j,j-h}\psi_{j-h}|^2&\\
		>(\frac{C}{\sqrt{N}}+\varepsilon)^2\sum_{h=k+1}^{k+N}|\psi_h|^2.\qquad\qquad\qquad\qquad\qquad\qquad\qquad
	\end{aligned}$$
	
	Summing over $k\in\mathbb{Z}$, we get
	$$2m^3M^2+2m(\delta+m M)^2+(N-2m)\delta^2>(\frac{C}{\sqrt{N}}+\varepsilon)^2N,$$
	thus $$N\delta^2+4\delta m^2M>C^2+\varepsilon^2N+2\sqrt{N}C\varepsilon-4m^3M^2.$$
	Choose $C^2=4m^3M^2,$ then $N\delta^2+4\delta m^2M>\varepsilon^2N+2\sqrt{N}C\varepsilon.$
	Given $\varepsilon,N$, choose $\delta$ sufficient small, which is impossible.
\end{proof}

\section*{Acknowledgements} 
Zhenfu Wang would like to thank Xinrui Tan for useful discussions. This work was partially supported by National Key R\&D Program of China (2020 YFA0713300) and Nankai Zhide Foundation.  J. You was also partially supported by NSFC grant (11871286). Q. Zhou was supported by NSFC grant (12071232), the Science Fund for Distinguished Young Scholars of Tianjin (No. 19JCJQJC61300).

\end{document}